\pgfplotsset{compat=1.15}
\DeclareFontFamily{U}{tipa}{}
\DeclareFontShape{U}{tipa}{m}{n}{<->tipa10}{}
\newcommand{\arc@char}{{\usefont{U}{tipa}{m}{n}\symbol{62}}}%
\newcommand{\arc}[1]{\mathpalette\arc@arc{#1}}
\newcommand{\arc@arc}[2]{%
  \sbox0{$\m@th#1#2$}%
  \vbox{
    \hbox{\resizebox{\wd0}{\height}{\arc@char}}
    \nointerlineskip
    \box0
  }%
}
\let\pa=\partial
\let\al=\alpha
\let\b=\beta
\let\g=\gamma
\let\d=\delta
\let\lam=\lambda
\let\s=\sigma
\let\t=\theta
\let\f=\frac
\let\G= \Gamma
\let\D=\Delta
\let\Om=\Omega
\let\e=\varepsilon
\let\pa=\partial
\let\ri=\rightarrow
\let\na=\nabla
\newcommand{\beq}{\begin{equation}}
\newcommand{\eeq}{\end{equation}}
\newcommand{\beqo}{\begin{equation*}}
\newcommand{\eeqo}{\end{equation*}}
\newcommand{\ben}{\begin{eqnarray}}
\newcommand{\een}{\end{eqnarray}}
\newcommand{\beno}{\begin{eqnarray*}}
\newcommand{\eeno}{\end{eqnarray*}}
\newtheorem{theorem}{Theorem}[section]
\newtheorem{definition}[theorem]{Definition}
\newtheorem{lemma}[theorem]{Lemma}
\newtheorem{proposition}[theorem]{Proposition}
\newtheorem{corol}[theorem]{Corollary}
\theoremstyle{remark}
\newtheorem{case}{Case}[section]
\newtheorem{rmk}{Remark}[section]
\newcommand{\dist}{\mathrm{dist}}
\newcommand{\BR}{\mathbb{R}}
\newcommand{\ch}{\mathcal{H}^1}
\newcommand{\cl}{\mathcal{L}^1}
\newcommand{\tu}{\tilde{u}}
\newcommand{\mres}{\mathbin{\vrule height 1.6ex depth 0pt width 0.13ex\vrule height 0.13ex depth 0pt width 1.1ex}}
\begin{document}

\title[Uniqueness of the blow-down limit]{Uniqueness of the blow-down limit for triple junction problem}

\author{Zhiyuan Geng}
\address{Department of Mathematics, Purdue University, 150 N. University Street, West Lafayette, IN 47907–2067}
\email{geng42@purdue.edu}

\begin{abstract}
    We prove the uniqueness of $L^1$ blow-down limit at infinity for an entire minimizing solution $u:\BR^2\ri\BR^2$ of a planar Allen-Cahn system with a triple-well potential. Consequently, $u$ can be approximated by a triple junction map at infinity. The proof exploits a careful analysis of energy upper and lower bounds, ensuring that the diffuse interface remains within a small neighborhood of the approximated triple junction at all scales.   
\end{abstract}

\date{\today}
\maketitle

\section{Introduction} \label{sec: intro}

\subsection{The problem and main result}

This paper is concerned with the uniqueness of the blow-down limit at infinity for an entire, bounded minimizing solution of the system 
\begin{equation}
    \label{main eq} \D u-W_u(u)=0,\quad u:\BR^2\ri\BR^2.
\end{equation}
where $W$ is a potential with three global minima. Specifically for $W$ we assume
\vspace{3mm}
\begin{enumerate}
    \item[(H1).]  $W\in C^2(\BR^2;[0,+\infty))$, $\{z: \,W(z)=0\}=\{a_1,a_2,a_3\}$, $W_u(u)\cdot u >0$ if $|u|>M$ and 
    \beqo
     c_2|\xi^2| \geq \xi^TW_{uu}(a_i)\xi\geq  c_1|\xi|^2,\; i=1,2,3.
    \eeqo 
    for some positive constants $c_1<c_2$ depending on $W$. 
\item[(H2).] For $i\neq j$, $i,j\in \{1,2,3\}$, let $U_{ij}\in W^{1,2}(\BR,\BR^2)$ be an 1D minimizer of the \emph{action}
\beqo
\sigma_{ij}:=\min \int_{\BR}\left(\f12|U_{ij}'|^2+W(U_{ij})\right)\,d\eta, \quad \lim\limits_{\eta\ri-\infty}U_{ij}(\eta)=a_i,\ \lim\limits_{\eta\ri+\infty}U_{ij}(\eta)=a_j.
\eeqo
$\sigma_{ij}$ satisfies
\beq\label{cond on sigma}
\sigma_{ij}\equiv \sigma>0\quad \text{ for }i\neq j\in\{1,2,3\} \text{ and some constant }\sigma.
\eeq
\end{enumerate}

Note that \eqref{main eq} is the Euler-Lagrange equation associated with the energy
\begin{equation}
    \label{main energy} E(u,\Omega):=\int_{\Omega} \left(\f12|\na u|^2+W(u)\right)\,dx.
\end{equation}

An entire \emph{minimizing solution} $u: \BR^2\ri \BR^2$ is defined in the following local sense. 
\begin{definition}\label{def: minimizing sol}
A function $u:\BR^2\ri\BR^2$  is a \emph{minimizing solution} of \eqref{main eq} in the sense of De Giorgi if 
\begin{equation}
    E(u,\Om)\leq E(u+v,\Om), \quad \forall \text{ bounded open sets } \Om\subset \BR^2, \ \forall  v\in C_0^1(\Om).
\end{equation}
\end{definition}

The solution we seek can be regarded as the diffuse analogue of the minimal 3--partition of the plane. Specifically, we define
\beqo
\mathcal{P}:=\{\mathcal{D}_1,\mathcal{D}_2,\mathcal{D}_3\}, 
\eeqo
which is a partition of the plane into three sectors, which are centered at the origin with opening angles of $120$ degrees. Let $\pa\mathcal{P}$ denote the union of three rays that separate $\{\mathcal{D}_i\}_{i=1}^3$, i.e.
$$
\pa \mathcal{P}:=\pa \mathcal{D}_1\cup\pa \mathcal{D}_2\cup\pa \mathcal{D}_3.
$$
We call $\partial \mathcal{P}$ a \emph{triod}. The \emph{triple junction map} is defined by 
\beq\label{form of triple junction sol}
U_{\mathcal{P}}:= a_1\chi_{\mathcal{D}_1}+ a_2\chi_{\mathcal{D}_2}+a_3\chi_{\mathcal{D}_3},
\eeq
where $\chi_{\Om}$ represents the characteristic equation of domain $\Om$.

It is well known that $\mathcal{P}$ is a minimizing partition of $\mathbb{R}^2$ into three phases and $\pa\mathcal{P}$ is a minimal cone. $\mathcal{P}$ is minimizing in the sense that for any $\Omega\subset \BR^2$, $\mathcal{P}\mres \Omega=\{D_i\cap\Om\}_{i=1}^3 $ is a solution of the following variational problem 
\beqo
\begin{split}
&\qquad\qquad\quad \min \sum\limits_{i<j} \mathcal{H}^1(\pa (A_i\cap \Om) \cap \pa(A_j\cap\Om)),\\
&\mathcal{A}=\{A_i\}_{i=1}^3 \text{ is a 3--partition of }\mathbb{R}^2 \text{ and } \mathcal{P}\mres(\mathbb{R}^2\setminus \Omega)= \mathcal{A}\mres(\mathbb{R}^2\setminus \Omega). 
\end{split}
\eeqo

The minimality stated above is related to Steiner's classical result which states that given three points $A,B,C$ on the plane such that the corresponding triangle has no angle greater than or equal to $120$ degrees, then if $P$ is a point that minimizes the sum of the distances $\vert P-A\vert+\vert P-B\vert+\vert P-C\vert$, the line segments $PA,PB,PC$ form three $120$-degree angles. 

In 1996, Bronsard, Gui and Schatzman \cite{bronsard1996three} established the existence of an entire solution in the equivariant class of the reflection group $\mathcal{G}$ of the symmetries of the equilateral triangle. The triple-well potential is also assumed invariant under $\mathcal{G}$. The solution is obtained as a minimizer in the equivariant class $u(gx)=gu(x), g\in \mathcal{G}$, hence is not necessarily stable under general perturbations. Their results was extended to the three dimensional case in 2008 by Gui and Schatzman \cite{gui2008symmetric}. We refer to the book \cite{afs-book} and to the references therein. 

In 2021, Fusco \cite{fusco} succeeded in establishing essentially the result of \cite{bronsard1996three} in the equivariant class of the rotation subgroup of $\mathcal{G}$ (by $\f23\pi$), thus eliminating the two reflections. The symmetry hypothesis fixes the center of the junction at the origin, which simplifies the analysis. 

On bounded domains, there are some constructions of triple junction solutions without imposing symmetry assumptions, see for example the paper by Sternberg and Ziemer \cite{sternberg1994local} for clover-shaped domains in $\BR^2$ via $\Gamma$--convergence, and for more general domains by Flores, Padilla and Tonegawa \cite{flores2001higher} by the a mountain pass argument.  These results do not allow to pass to the limit and establish the existence of a triple junction solution on $\BR^2$.

The existence of an entire minimizing solution in the sense of Definition \ref{def: minimizing sol}, characterized by a triple junction structure at infinity, has been independently established in two recent papers: by Alikakos and the author \cite{alikakos2024triple} and by Sandier and Sternberg \cite{sandier2023allen}. Under slightly different hypotheses and employing distinct methods, these two studies have yielded comparable results saying that along a subsequence $r_k\ri\infty$, the rescaled function $u_{r_k}(z):=u(r_kz)$ converges in $L^1_{loc}(\BR^2)$ to a triple junction map $u_0$ of the form \eqref{form of triple junction sol}. However, it remains unclear a priori if there could exist two different sequences of rescalings $\{r_k\}$ and $\{s_k\}$, leading to distinct blow-down limits $u_1$ and $u_2$, corresponding to distinct minimal 3--partitions $\mathcal{P}_1$ and $\mathcal{P}_2$, respectively. The primary objective of the present paper is to rule out this possibility and demonstrate the uniqueness of the blow-down limit. We now state our main result.

\begin{theorem}\label{main theorem}
    There exists a bounded, minimizing solution $u:\BR^2\ri\BR^2$ of \eqref{main eq} such that for any compact $K\subset \BR^2$,
    \begin{equation*}
        \lim\limits_{r\ri\infty} \|u_r(x)-u_0\|_{L^1(K)}=0,
    \end{equation*}
    where $u_0=\sum\limits_{i=1}^3 a_i\mathcal{\chi}_{\mathcal{D}_i}$ for $\mathcal{P}=\{\mathcal{D}_i\}_{i=1}^3$ providing a minimal partition of $\BR^2$ into three sectors with angle of $120$ degrees and $\pa \mathcal{P}$ is a triod centered at $0$.
\end{theorem}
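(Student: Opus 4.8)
\medskip
\noindent\textbf{Proof strategy.} By \cite{alikakos2024triple,sandier2023allen} there is a bounded minimizing solution $u$ admitting, along some subsequence $r_k\to\infty$, an $L^1_{\mathrm{loc}}$ blow-down which is a triple junction map $U_{\mathcal P_\theta}$ attached to a $120^\circ$ triod centered at the origin (here the hypothesis $\sigma_{ij}\equiv\sigma$ is exactly what forces the limiting minimal $3$-cone to be the $120^\circ$ triod, unique up to a rotation $\theta\in\BR/\tfrac{2\pi}{3}\mathbb{Z}$); fix such a $u$. Since an $L^1_{\mathrm{loc}}$-limit of $u_r$ along the \emph{full} family $r\to\infty$, if it exists, is automatically unique, the whole theorem reduces to the single-parameter assertion: \emph{the rotation angle $\theta(r)$ of the $120^\circ$ triod best approximating $u$ in the annulus $B_{2r}\setminus B_{r/2}$ converges as $r\to\infty$.} The plan is to extract this convergence from a tight two-sided energy budget together with the explicit geometry of the planar triod.

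First I would record the sharp energy asymptotics. A monotonicity formula for $r\mapsto r^{-1}E(u,B_r)$ for minimizers (via a Modica-type gradient bound $\tfrac12|\nabla u|^2\le W(u)$, or a direct cone comparison) gives that this quantity is nondecreasing; combined with the $\Gamma$-$\liminf$ inequality against a subsequential blow-down it increases to $3\sigma$, so $E(u,B_r)\le 3\sigma r$. Gluing the one-dimensional profiles $U_{ij}$ along a fixed $120^\circ$ triod and interpolating to $u$ near $\partial B_r$ (using the confinement below) produces the matching lower bound $E(u,B_r)\ge 3\sigma r-C$, so the deficit $\eta(r):=E(u,B_r)-3\sigma r$ is bounded and, from the monotonicity formula,
\[
\int_{1}^{\infty}\frac1r\left(\int_{\partial B_r}|\partial_\nu u|^2\,d\ch+\frac1r\int_{B_r}\!\big(W(u)-\tfrac12|\nabla u|^2\big)\,dx\right)dr=3\sigma-E(u,B_1)<\infty ,
\]
both integrands being nonnegative; in particular $\int_{\BR^2\setminus B_1}|x|^{-1}|\partial_\nu u|^2\,dx<\infty$.

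Next I would localize the interface. From $r^{-1}E(u,B_r)\to 3\sigma$, the $\Gamma$-convergence, and density/clean-up estimates for minimizers, for all large $r$ the diffuse interface $\{\,|u-a_i|>\delta\ \forall i\,\}\cap (B_{2r}\setminus B_{r/2})$ lies in an $o(r)$-neighborhood of $\partial\mathcal P_{\theta(r)}$ and is a disjoint union of exactly three connected threads, the $j$-th separating one fixed ordered pair of wells at both ends — so the phase labels pin which ray of $\partial\mathcal P_{\theta(r')}$ a thread runs to at a larger scale $r'$, and minimality keeps the three threads close to $120^\circ$ apart. Writing thread $j$ in polar coordinates as $\{\vartheta=\phi_j(\rho)\}$ and localizing the radial energy to the threads gives $\int_{\partial B_\rho}|\partial_\nu u|^2\,d\ch\gtrsim \sum_j(\rho\,\phi_j'(\rho))^2-o(1)$, whence the defect bound yields $\sum_j\int_1^\infty \rho\,\phi_j'(\rho)^2\,d\rho<\infty$; and, per dyadic annulus, an excess–drift inequality
\[
E\big(u,B_{2r}\setminus B_r\big)-3\sigma r\ \ge\ c\,r\sum_j\big|\phi_j(2r)-\phi_j(r)\big|^2-\mathrm{err}(r),
\]
since a thread crossing the annulus with net angular change $\Delta$ is at least $c\,r\Delta^2$ longer than a radial segment (an elementary calculus-of-variations estimate, valid for $|\Delta|\le\pi$). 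Equivalently one may cast this as a damped ODE system in $\tau=\log\rho$ for the rotational mode $\theta$ and the two "relative-angle" modes, the latter being strictly stable (strict minimality of the $120^\circ$ configuration among triods modulo rotation) and hence decaying exponentially in $\tau$, which in turn drives a summable drift of $\theta$.

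The theorem would then follow by a dyadic iteration: combining the excess–drift inequality, $\sum_k\big(\eta(2^{k+1}R)-\eta(2^kR)\big)<\infty$, and the exponential relaxation of the relative-angle mismatch forces $\sum_k|\theta(2^{k+1}R)-\theta(2^kR)|<\infty$, so $\theta(2^kR)$ is Cauchy; interpolating over non-dyadic radii gives $\theta(r)\to\theta_\infty$, while the energy bound also confines the triod's vertex to a fixed ball, so its rescaling tends to $0$. Hence $u_r\to U_{\mathcal P_{\theta_\infty}}$ in $L^1_{\mathrm{loc}}(\BR^2)$, a triple junction map for a $120^\circ$ triod centered at the origin, as claimed. \emph{The main obstacle} is precisely the passage from the a priori only square-summable angular increments — which is all that $\sum_j\int_1^\infty\rho\,(\phi_j')^2\,d\rho<\infty$ gives, the familiar logarithmic loss in Cauchy–Schwarz — to genuinely summable ones: this is where the two-sided energy estimates and the strict stability transverse to the single rotational zero mode must be combined quantitatively, in the spirit of a Łojasiewicz–Simon estimate but carried out by hand for the explicit planar triod, together with the bookkeeping ensuring that the diffuse-to-interface reduction contributes only errors $\mathrm{err}(r)$ summable over dyadic scales.
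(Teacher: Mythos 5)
Your proposal follows a Simon--\L ojasiewicz/excess-decay template that the paper deliberately avoids, and as written it has genuine gaps. The first is the monotonicity formula. In 2D the identity reads
$\frac{d}{dr}\bigl(r^{-1}E(u,B_r)\bigr)=r^{-1}\int_{\pa B_r}|\pa_\nu u|^2\,d\ch+r^{-2}\int_{B_r}\bigl(W(u)-\tfrac12|\na u|^2\bigr)\,dz$,
and the nonnegativity of the second term requires a Modica-type bound $\tfrac12|\na u|^2\le W(u)$. That bound is a scalar phenomenon; for vector-valued minimizers of systems such as \eqref{main eq} it is not available (the paper only has the asymptotic \emph{equipartition} \eqref{energy equipartition}, which says the two quantities balance in the limit, not pointwise or with a sign). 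So neither the monotonicity of $r^{-1}E(u,B_r)$ nor the finiteness of $\int|x|^{-1}|\pa_\nu u|^2$ is established, and the "deficit is bounded and nondecreasing" structure your whole argument rests on is unproven. The second gap is that the two load-bearing estimates --- the excess--drift inequality $E(u,B_{2r}\setminus B_r)-3\sigma r\ge c\,r|\Delta\theta|^2-\mathrm{err}(r)$ with controlled $\mathrm{err}(r)$, and the "exponential relaxation of the relative-angle modes" via a damped ODE --- are asserted rather than proved. The entire technical content of the problem (identifying three well-separated transition arcs on each circle, converting diffuse energy into sharp interface length per annulus, and pinning the interface to a triod with quantified error) is exactly what is needed to make $\mathrm{err}(r)$ meaningful, and you defer all of it. You do flag this honestly as "the main obstacle," but that obstacle is the theorem.

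For comparison, the paper's mechanism is different and more elementary: at each dyadic scale $R_i$ it proves matching upper and lower energy bounds $\sigma(|DA|+|DB|+|DC|)\pm CR_i^{\max\{\alpha,2/3\}}$ (Propositions \ref{prop: upper bound} and \ref{prop: lower bound}), where the triod $T_{A_iB_iC_i}$ is built from the Fermat point of the three transition arcs --- so its angles are exactly $120$ degrees by construction and there are no "relative-angle modes" to relax. The energy pinch then localizes the diffuse interface in an $O(R_i^\beta)$-neighborhood of $T_{A_iB_iC_i}$, $\beta<1$ (Proposition \ref{prop: loc of interface}), and since consecutive scales describe the same function $u$, the two triods must agree to within $O(R_i^\beta)$ over a lever arm of length $R_i$, giving the \emph{linear} increment bound $|\theta_i-\theta_{i+1}|\le CR_i^{\beta-1}$ (Lemma \ref{lemma: difference of theta i}). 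This sums geometrically with no need for a monotone quantity, a squared-excess inequality, or any linearized stability analysis. Your dyadic summation at the end is structurally similar to Section \ref{sec: conclusion}, but the input it needs is precisely the interface-localization estimate you have not supplied.
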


\begin{rmk}
    The result above also holds for the case when $\sigma_{ij}$ are not all equal but still satisfy $\sigma_{ij}+\sigma_{jk}<\sigma_{ik}$. In this scenario, the corresponding minimal partition $\mathcal{P}$ consists of three sectors $\mathcal{D}_i$ with opening angles $\theta_i$ satisfying
    $ \sum_{i=1}^3\t_i=2\pi$
 and $\f{\sin{\t_{1}}}{\s_{23}}=\f{\sin{\t_2}}{\s_{13}}=\f{\sin{\t_3}}{\s_{12}}.$ The proof for the general $\sigma_{ij}$ case follows the same argument as the proof of Theorem \ref{main theorem}. 
\end{rmk}

Uniqueness of blow-up or blow-down limits is one of central questions in the study of singular structures in geometric PDEs. Following the prominent early works by Allard and Almgren \cite{allard1981radial} and Simon \cite{simon1983asymptotics}, uniqueness questions have been investigated extensively for free boundary problems, harmonic maps, minimal surfaces and geometric flows. Most of these results rely on some type of the Simon--{\L}ojasiewicz inequality or epiperimetric inequalities, showing the decay of certain monotone quantities at a definite rate. In our proof, the uniqueness of the blow-down limit is obtained from a delicate estimate on the localization of the diffuse interface, derived from a pure variational argument, thus avoid the use of those classical methods.

\subsection{An overview of the proof}
We now list some key steps and ideas in the proof of our results. As mentioned earlier, we can start with a minimizing map $u$ of \eqref{main eq} as constructed in \cite{alikakos2024triple} and \cite{sandier2023allen}, which converges to a triple junction solution of the form \eqref{form of triple junction sol} along some subsequence $r_k\ri\infty$. In particular, for arbitrarily small $\e$, there exists a sufficiently large $R_0$ such that 
\begin{equation*}
    3\sigma-\e \leq \int_{\pa B_{R_0}} \left(\f12|\na_T u|^2+W(u)\right)\,d\ch\leq 3\sigma+\e,
\end{equation*}
where $\na_T$ denotes the tangential derivative. 

Starting from this estimate, which basically means there are three phase transitions along $\pa B_{R_0}$, we can derive that $u(z)$ should ``behave nicely" on $\pa B_{R_0}$. To be more specific, $u$ is close to phases $a_1$, $a_2$, $a_3$ on three arcs $I_1$, $I_2$, $I_3$ respectively. Between these phases, there will be three small arcs $I_{ij}$ ($i\neq j\in \{1,2,3\}$)  separating them, which can be regarded as the place where phase transition happens. We pick points $A\in I_{12}$, $B\in I_{13}$, $C\in I_{23}$ and determine the point $D$ such that $|DA|+|DB|+|DC|$ is minimized. Then we let $T_{ABC}:=DA\cup DB\cup DC $ be the approximate triod on $B_{R_0}$. 

With such a nicely behaved boundary data on $\pa B_{R_0}$, we obtain the following energy upper bound and lower bound in $B_{R_0}$. 
\beq\label{intro: lower upper bdd}
\sigma(|DA|+|DB|+|DC|)-CR_0^\al \leq \int_{B_{R_0}}\left(\f12|\na u|^2+W(u)\right)\,dz\leq \sigma(|DA|+|DB|+|DC|)+CR_0^\al,
\eeq
for some constants $C=C(W)$ and $\al\in (0,1)$. For the upper bound, we utilize the construction of an energy competitor outlined in \cite[Proposition 3.3]{sandier2023allen}, while for the lower bound we mimic the proof of \cite[Proposition 3.1]{alikakos2024triple}. 

We define the diffuse interface as
\beqo
I_{\g}:= \{z:|u(z)-a_i|\geq \g,\ \forall i=1,2,3\}.
\eeqo
The energy bound \eqref{intro: lower upper bdd} implies that $I_\g\cap B_{R_0}$ locates in an $O(R_0^\beta)$ (for some $\beta<1$) neighborhood of the approximate triod $T_{ABC}$. Moreover, away from $T_{ABC}$, the distance of $u(z)$ to $a_i$ will decay exponentially with respect to $\dist(z, T_{ABC})$ thanks to Hypothesis (H1) and standard elliptic theories.

We proceed by rescaling $B_{R_0}$ to the unit disk $B_1$ through the function $u_{R_0}:=u(R_0z)$. The exponential decay above implies that $u_{R_0}$ is closely approximated in $L^1$ norm by a minimal partition $\{\mathcal{D}_{R_0}^i\}_{i=1}^3$ of $B_1$ determined by the rescaled version of $T_{ABC}$. Specifically, we have 
\beq\label{intro: L1 closeness}
\|u_{R_0}-U_{R_0}\|_{L^1}\leq CR_0^{\beta-1},
\eeq
where $U_{R_0}=\sum a_i \chi_{\mathcal{D}_{R_0}^i}$. We point out that \eqref{intro: L1 closeness} holds for any larger scaling $R_i\sim 2^i R_0$, where $U_{R_i}$ is the corresponding approximate triple junction map at the scale $R_i$.

A key observation is that approximate triple junction maps at two consecutive scales are close to each other, i.e.
\begin{equation}\label{intro: closeness of two consecutive scales}
    \|U_{R_i}-U_{R_{i+1}}\|\leq CR_i^{\beta-1}.
\end{equation}
This is established by \eqref{intro: L1 closeness} and the fact that $u_{R_i}$ and $u_{R_{i+1}}$ are obtained by rescaling the same function $u$. Finally, we can iterate \eqref{intro: closeness of two consecutive scales} and deduce that  $U_{R_i}$ will converge to a unique triple junction map, and thereby concluding the proof. 

The article is organized as follows. In Section \ref{sec: preliminaries} we present some preliminary results from \cite{AF} and \cite{afs-book}. In Section \ref{sec: existence} and \ref{sec: bdy data}, we establish the existence of a minimizing solution $u$ and fix a well behaved boundary data on $\pa B_{R_0}$.  Next, we establish the energy bounds \eqref{intro: lower upper bdd} in Section \ref{sec: lower upper bdd}.  The localization of the diffuse interface within an $O(R_0^{\beta})$ neighborhood of the triod $T_{ABC}$ is proved in Section \ref{sec: loc of interface}. Then in Section \ref{sec: rescale} we rescale $B_{R_0}$ to $B_1$ and prove \eqref{intro: L1 closeness}. Lastly, we conclude the proof of the main theorem utilizing the estimate \eqref{intro: closeness of two consecutive scales} in Section \ref{sec: conclusion}.

\section{preliminaries}\label{sec: preliminaries}

Throughout the paper we denote by $z=(x,y)$ a 2D point and by $B_r(z)$ the 2D ball centered at the point $z$ with radius $r$. In addition, we let $B_r$ denote the 2D ball centered at the origin. We recall the following basic results which play a crucial part in our analysis.
\begin{lemma}[Lemma 2.1 in \cite{AF}]\label{lemma: potential energy estimate}
The hypotheses on $W$ imply the existence of $\delta_W>0$, and constants $c_W,C_W>0$ such that
\beqo
\begin{split}
&|u-a_i|=\delta\\
\Rightarrow & \ \f12 c_W\delta^2\leq W(u)\leq \f12 C_W \delta^2,\quad \forall \delta<\delta_W,\ i=1,2,3.
\end{split}
\eeqo
Moreover if $\min\limits_{i=1,2,3} |u-a_i|\geq \delta$ for some $\delta<\delta_W$, then $W(u)\geq \f12 c_W\delta^2.$
\end{lemma}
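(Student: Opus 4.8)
The plan is to localise the estimate near each of the three wells by a second‑order Taylor expansion, and then to dispose of the remaining region of $\BR^2$ by a compactness argument for the bounded part and by the sign condition in (H1) for the unbounded part. Concretely, the first step is the following. Since $W\in C^2$, $W(a_i)=0$ and each $a_i$ is a global minimum, we have $W_u(a_i)=0$, so Taylor's theorem gives $W(u)=\tfrac12(u-a_i)^T W_{uu}(a_i)(u-a_i)+o(|u-a_i|^2)$ as $u\to a_i$. Feeding in the spectral bounds $c_1|\xi|^2\le \xi^T W_{uu}(a_i)\xi\le c_2|\xi|^2$ and absorbing the remainder, one obtains a radius $\rho_i>0$ such that $\tfrac{c_1}{4}|u-a_i|^2\le W(u)\le c_2|u-a_i|^2$ whenever $|u-a_i|\le\rho_i$. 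Set $\delta_W^0:=\min_i\rho_i$, shrunk if necessary so that the balls $B_{\delta_W^0}(a_i)$ are pairwise disjoint.

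For the second step, consider the compact set $K:=\overline{B_{M+1}}\setminus\bigcup_{i=1}^3 B_{\delta_W^0}(a_i)$. On $K$ the function $W$ is continuous and strictly positive, since its only zeros are $a_1,a_2,a_3$; hence $m_1:=\min_K W>0$. For $|u|>M$, writing $u=r\omega$ with $r=|u|$ and $|\omega|=1$, one has $\tfrac{d}{dr}W(r\omega)=\tfrac1r\,W_u(r\omega)\cdot(r\omega)>0$ by (H1), so $W$ is strictly increasing along rays outside $B_M$ and therefore $W(u)\ge \min_{|v|=M+1}W(v)=:m_2>0$ for all $|u|\ge M+1$. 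Put $m:=\min(m_1,m_2)>0$; by construction $W(u)\ge m$ whenever $\min_i|u-a_i|\ge\delta_W^0$. (Here one also uses $|a_i|\le M$, which is forced by $W_u(a_i)=0$ together with $W_u(u)\cdot u>0$ for $|u|>M$.)

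The third step is just bookkeeping. Take $\delta_W:=\delta_W^0$, $C_W:=2c_2$, and $c_W:=\min\bigl(\tfrac{c_1}{2},\,\tfrac{2m}{\delta_W^2}\bigr)$. If $|u-a_i|=\delta<\delta_W$, Step 1 yields $\tfrac12 c_W\delta^2\le\tfrac{c_1}{4}\delta^2\le W(u)\le c_2\delta^2=\tfrac12 C_W\delta^2$, which is the first assertion. For the ``moreover'' part, assume $\min_i|u-a_i|\ge\delta$ with $\delta<\delta_W$. If $u\in B_{\delta_W}(a_i)$ for some $i$, then $\delta\le|u-a_i|<\delta_W$ and Step 1 gives $W(u)\ge\tfrac{c_1}{4}|u-a_i|^2\ge\tfrac12 c_W\delta^2$. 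If instead $u$ lies outside every $B_{\delta_W}(a_i)$, then Step 2 gives $W(u)\ge m\ge\tfrac12 c_W\delta_W^2\ge\tfrac12 c_W\delta^2$. In either case the lower bound holds.

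The argument is essentially elementary, and the only point needing a little care is the control of $W$ on the unbounded set $\{|u|>M\}$: a naive compactness argument is unavailable there, and one must instead exploit the radial monotonicity implied by $W_u(u)\cdot u>0$ to reduce to the value of $W$ on the sphere $\{|v|=M+1\}$. Everything else is a direct consequence of the $C^2$ regularity and nondegeneracy of the wells recorded in (H1).
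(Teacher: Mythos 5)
Your proof is correct. The paper only quotes this statement as Lemma~2.1 of \cite{AF} and does not reproduce a proof, so there is nothing internal to compare against; your argument --- second-order Taylor expansion at the nondegenerate wells using the spectral bounds on $W_{uu}(a_i)$, a compactness bound on $\overline{B_{M+1}}$ away from the wells, and the radial monotonicity coming from $W_u(u)\cdot u>0$ for $|u|>M$ to handle the unbounded region --- is the standard and complete way to establish it, and the bookkeeping of the constants $\delta_W$, $c_W$, $C_W$ checks out.
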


\begin{lemma}[Lemma 2.3 in \cite{AF}]\label{lemma: 1D energy estimate}
Take $i\neq j \in \{1,2,3\}$, $\d <\d_W$ and $s_+>s_-$ be two real numbers. Let $v:(s_-,s_+)\ri \BR^2$ be a smooth map that minimizes the energy functional 
\beqo
J_{(s_-,s_+)}(v):=\int_{s_-}^{s_+} \left(\f12|\na v|^2+W(v)\right)\,dx 
\eeqo
subject to the boundary condition 
\beqo
|v(s_-)-a_i|=|v(s_+)-a_j|=\delta.
\eeqo
Then
\beqo
J_{(s_-,s_+)}(v)\geq \sigma-C_W\delta^2,
\eeqo
where $C_W$ is the constant in Lemma \ref{lemma: potential energy estimate}. 
\end{lemma}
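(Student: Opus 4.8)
The statement is essentially a cut-and-paste comparison with the one-dimensional problem defining $\sigma=\sigma_{ij}$. The plan is to extend $v$ from $(s_-,s_+)$ to all of $\BR$ by attaching two short interpolating segments joining $a_i$ to $v(s_-)$ and $v(s_+)$ to $a_j$, followed by two constant tails equal to $a_i$ and $a_j$. The resulting map $\tilde v$ is an admissible competitor for the minimization defining $\sigma_{ij}$, so its total action is at least $\sigma$; since the two added segments carry only $O(\delta^2)$ of energy (this is where Lemma~\ref{lemma: potential energy estimate} enters), subtracting them yields $J_{(s_-,s_+)}(v)\ge\sigma-C_W\delta^2$.

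\textbf{Construction and bookkeeping.} Fix $L>0$ to be chosen and define $\tilde v:\BR\to\BR^2$ by $\tilde v\equiv a_i$ on $(-\infty,s_--L]$, $\tilde v(t)=a_i+\frac{t-(s_--L)}{L}(v(s_-)-a_i)$ on $[s_--L,s_-]$, $\tilde v=v$ on $[s_-,s_+]$, $\tilde v(t)=v(s_+)+\frac{t-s_+}{L}(a_j-v(s_+))$ on $[s_+,s_++L]$, and $\tilde v\equiv a_j$ on $[s_++L,+\infty)$. Then $\tilde v$ is Lipschitz, eventually constant, with $\lim_{t\to-\infty}\tilde v=a_i$ and $\lim_{t\to+\infty}\tilde v=a_j$, hence admissible; therefore $\int_\BR\big(\tfrac12|\tilde v'|^2+W(\tilde v)\big)\,dt\ge\sigma$. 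The constant tails contribute nothing since $W(a_i)=W(a_j)=0$, so
\[
\sigma\le J_{(s_-,s_+)}(v)+\int_{s_--L}^{s_-}\Big(\tfrac12|\tilde v'|^2+W(\tilde v)\Big)\,dt+\int_{s_+}^{s_++L}\Big(\tfrac12|\tilde v'|^2+W(\tilde v)\Big)\,dt .
\]
On the first added interval $|\tilde v'|\equiv\delta/L$ while $|\tilde v(t)-a_i|\le\delta<\delta_W$, so Lemma~\ref{lemma: potential energy estimate} (applied at radius $|\tilde v(t)-a_i|$) gives $W(\tilde v(t))\le\tfrac12 C_W\delta^2$; hence that term is at most $\frac{\delta^2}{2L}+\frac12 C_W\delta^2 L$, and the same bound holds near $a_j$. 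Taking $L=1/\sqrt{C_W}$ makes each of the two terms at most $\sqrt{C_W}\,\delta^2$, so $J_{(s_-,s_+)}(v)\ge\sigma-2\sqrt{C_W}\,\delta^2$; enlarging if necessary the constant $C_W$ of Lemma~\ref{lemma: potential energy estimate} so that $C_W\ge 4$ (which only weakens that lemma's upper bound on $W$) gives $J_{(s_-,s_+)}(v)\ge\sigma-C_W\delta^2$, as claimed. (A slightly sharper constant is obtained by using $\int_{s_--L}^{s_-}W(\tilde v)\le\frac{C_W\delta^2 L}{6}$ and $L=\sqrt{3/C_W}$, but this is immaterial.)

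\textbf{Where care is needed, and the obstacle.} The only structural point is that the straight segment from $a_i$ to $v(s_-)$ remains inside $\overline{B_\delta(a_i)}\subset B_{\delta_W}(a_i)$, so the quadratic upper bound on $W$ from Lemma~\ref{lemma: potential energy estimate} applies along the whole segment; this is precisely why the hypothesis $\delta<\delta_W$ is imposed. Admissibility of $\tilde v$ is immediate because $\tilde v$ is Lipschitz and eventually constant, so $\tilde v-a_i\in L^2(-\infty,0)$, $\tilde v-a_j\in L^2(0,\infty)$, and the action is finite. Note that minimality of $v$ is never used: the inequality holds for every $v$ satisfying the endpoint constraints, minimizers a fortiori. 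In truth there is no genuine obstacle here — the argument is a routine gluing — the only quantitative choice being the interpolation length $L$, selected to balance the kinetic contribution $\delta^2/(2L)$ against the potential contribution $\lesssim C_W\delta^2 L$.
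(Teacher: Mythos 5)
Your proof is correct and is essentially the standard gluing/comparison argument behind the cited Lemma 2.3 of \cite{AF} (the present paper only quotes that result without reproving it): extend $v$ by linear interpolations of length $L\sim C_W^{-1/2}$ plus constant tails, compare the resulting admissible heteroclinic competitor with $\sigma$, and absorb the $O(\delta^2)$ cost of the added segments via Lemma \ref{lemma: potential energy estimate}. Your constant bookkeeping (needing $2\sqrt{C_W}\le C_W$, or equivalently accepting a constant $C(W)$ in place of the literal $C_W$) is handled correctly and is immaterial for every application in the paper, which only ever uses the bound in the form $\sigma-C(W)\delta^2$.
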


\begin{lemma}[Variational maximum principle, Theorem 4.1 in \cite{afs-book}]\label{lemma: maximum principle}
     There exists a positive constant $r_0=r_0(W)$ such that for any $u\in W^{1,2}(\Omega,\BR^2)\cap L^\infty(\Om,\BR^2)$ being a minimizer of $E(\cdot,\Om)$, if $u$ satisfies 
    $$
    \vert u(x)-a_i\vert\leq r \text{ on }\pa\Om, \ \ \text{for some }r<r_0,\ i\in\{1,2,3\},
    $$
    then 
    $$
    \vert u(x)-a_i\vert\leq r\ \ \forall x\in \Om.
    $$
\end{lemma}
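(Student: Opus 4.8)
\medskip
\noindent\textit{Proof strategy.}

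I would isolate two ingredients. First, using Hypothesis (H1), fix $r_0=r_0(W)>0$ so small that the closed balls $\overline{B_{r_0}(a_1)},\overline{B_{r_0}(a_2)},\overline{B_{r_0}(a_3)}$ are pairwise disjoint and that, for each $i$,
\beq\label{eq: radial mono}
(z-a_i)\cdot W_u(z)\ \ge\ \tfrac{c_1}{2}\,|z-a_i|^2\qquad\text{whenever }0<|z-a_i|\le r_0 ;
\eeq
this follows from the Taylor expansion of $W_u$ at $a_i$ together with $W_u(a_i)=0$ and $\xi^TW_{uu}(a_i)\xi\ge c_1|\xi|^2$. In words, \eqref{eq: radial mono} says that inside $B_{r_0}(a_i)$ the potential $W$ increases strictly along every ray emanating from $a_i$. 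The two ingredients are: \textbf{(A)} the \emph{confinement} estimate $|u-a_i|<r_0$ on all of $\Om$; and \textbf{(B)} the passage from (A) to the sharp bound $|u-a_i|\le r$. I expect (A) to be the main obstacle, while (B) is soft.

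For \textbf{(B)}, the quickest route is through the equation. Minimizers of $E$ are smooth solutions of \eqref{main eq}, so $\rho^2:=|u-a_i|^2\in C^2(\Om)$ and, using $\D u=W_u(u)$ and \eqref{eq: radial mono} together with (A),
\beqo
\D(\rho^2)=2|\na u|^2+2(u-a_i)\cdot W_u(u)\ \ge\ c_1\,\rho^2\ \ge\ 0\qquad\text{on }\Om .
\eeqo
Thus $\rho^2$ is subharmonic in $\Om$, so by the maximum principle $\sup_\Om\rho^2=\sup_{\pa\Om}\rho^2\le r^2$, which is exactly the assertion. (Staying variational: let $P$ be the nearest--point projection onto $\overline{B_r(a_i)}$ and $\tu:=P\circ u$; then $\tu=u$ on $\pa\Om$, $|\na\tu|\le|\na u|$ a.e., and by (A) and \eqref{eq: radial mono} $W(\tu)\le W(u)$ pointwise, so $E(\tu,\Om)\le E(u,\Om)$; minimality forces equality, and the strictness in \eqref{eq: radial mono} then forces $|u-a_i|\le r$ a.e., hence everywhere by continuity.)

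For \textbf{(A)}, suppose the closed set $\{|u-a_i|\ge r_0\}$ were nonempty; since $|u-a_i|\le r<r_0$ on $\pa\Om$ and $u$ is continuous, this set is compactly contained in $\Om$, so $u$ develops an interior ``island'' of values at distance $\ge r_0$ from $a_i$ --- necessarily accumulating near some other well $a_j$, for otherwise $W(u)$ would be bounded below on that island and the projection onto $\overline{B_{r_0}(a_i)}$ would already strictly decrease the energy. One then builds an admissible competitor that agrees with $u$ near $\pa\Om$ but is pushed back toward $B_{r_0}(a_i)$ in the bulk --- e.g.\ a radial truncation $a_i+\min(|u-a_i|,\rho_1)\tfrac{u-a_i}{|u-a_i|}$ with $\rho_1$ slightly below $\sup_\Om|u-a_i|$, or an interpolation to the constant $a_i$ across an interior annulus --- and shows, with the help of Lemma \ref{lemma: potential energy estimate} and the one--dimensional action lower bounds behind \eqref{cond on sigma} and Lemma \ref{lemma: 1D energy estimate}, that this competitor has strictly smaller energy once $r_0$ is chosen small enough depending only on $W$. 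This contradicts minimality and proves (A). Carrying out this energy accounting so that the threshold is genuinely $W$--dependent only --- controlling, via a slicing argument, the area that $u$ can spend with values in the narrow shell just outside the wells --- is the technical heart of the lemma, and it is precisely here that minimality (rather than merely solving \eqref{main eq}) is used, the statement being false for general non--minimizing solutions.
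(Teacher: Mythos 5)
The paper does not prove this lemma; it is quoted verbatim as Theorem 4.1 of \cite{afs-book} (the ``cut-off lemma''), so there is no in-paper argument to compare against. Judged on its own terms, your proposal has the right architecture, and step (B) is essentially complete and correct: once one knows $|u-a_i|<r_0$ on all of $\Om$, the radial monotonicity \eqref{eq: radial mono} (which does follow from (H1) by Taylor expansion) makes both the subharmonicity of $|u-a_i|^2$ and the variational projection argument work, modulo the routine remark that interior minimizers are smooth so that $\D u=W_u(u)$ holds classically.

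The genuine gap is step (A), which you yourself flag as ``the technical heart'': it is the entire content of the cited theorem, and it is not proved. Neither of the two competitors you float is shown to decrease the energy, and the first one fails as stated: the radial truncation $a_i+\min(|u-a_i|,\rho_1)\tfrac{u-a_i}{|u-a_i|}$ with $\rho_1$ just below $\sup_\Om|u-a_i|$ does control the gradient term, but not the potential term --- if $u$ visits a neighborhood of another well $a_j$ then $W(u)\approx 0$ there, while the truncated value sits at distance $\rho_1$ from $a_i$ on an arbitrary ray, where $W$ need not be small, so $W(\tu)\le W(u)$ is false pointwise and the comparison collapses. The annulus-interpolation competitor, in turn, only yields a contradiction if one has a quantitative \emph{lower} bound on the interior energy of $u$ (coming from the transition layer $u$ must traverse to reach the island where $|u-a_i|\ge r_0$), and producing that lower bound with constants depending only on $W$ --- uniformly in $\Om$ and in the geometry of the island --- is precisely the delicate slicing/accounting argument that the proof in \cite{afs-book} carries out and that your write-up only gestures at. As it stands, the proposal establishes the reduction (A)$\Rightarrow$(conclusion) but not (A) itself, so the lemma is not proved.
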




\section{Existence of an entire minimizing solution.}\label{sec: existence}

By the constructions in \cite{alikakos2024triple} and \cite{sandier2023allen}, there exists a minimizing solution  $u:\BR^2\ri\BR^2$ of \eqref{main eq} such that the following hold:
    \begin{enumerate}
        \item There exists $M>0$ such that 
        \beq\label{uniform C1 bound}
        |u(z)|+|\na u(z)|\leq M, \quad \forall z\in\BR^2.
        \eeq
        \item For any sequence $r_k\ri\infty$, there is a subsequence, still denoted by $\{r_k\}$, such that 
        \begin{equation}\label{seq conv}
            u(r_k z)\rightarrow u_0(z)\ \text{ in }L^1_{loc}(\BR^2),
        \end{equation}
        where $u_0(z)=\sum\limits_{i=1}^3a_i \mathcal{\chi}_{\mathcal{D}_i}$. for $\mathcal{P}=\{\mathcal{D}_1,\mathcal{D}_2,\mathcal{D}_3\}$ providing a minimal partition of $\BR^2$ into three sectors of the angle $\f23\pi$ and $\pa \mathcal{P}$ is a triod centered at $0$. Moreover, along the same subsequence we have the following energy estimate
        \begin{equation*}
            \lim\limits_{r_k\ri\infty} \f{1}{r_k} E(u,B_{r_k}(0))=3\sigma.
        \end{equation*}
        This energy estimates follows from the $\G$--convergence result in Baldo \cite{Baldo} that holds also without the mass constraint (see Gazoulis \cite{gazoulis}).
        \item By \cite[Lemma 3.4 \& Lemma 3.5]{sandier2023allen}, $u$ is asymptotically homogeneous and satisfies an asymptotic energy equipartition property at large scale,  
        \begin{equation}\label{energy equipartition}
            \lim\limits_{R\ri\infty} \f{1}{R} \int_{B_R}W(u)\,dz=\lim\limits_{R\ri\infty} \f1R\int_{B_R} \f12|\na u|^2\,dz=\lim\limits_{R\ri\infty} \f1R\int_{B_R} \f12|\na_T u|^2\,dz=\frac{3\sigma}{2}. 
        \end{equation}
        Here $\na_T$ denotes the tangential derivative. 
    \end{enumerate}

    Utilizing \eqref{energy equipartition}, for any $\e>0$, there exists a large $R(\e)$ such that for any $R>R(\e)$, 
    \begin{align*}
    3\sigma -\e\leq &\f{1}{R}\int_{B_R} \left( \f12|\na u|^2+W(u)
 \right)\,dz\leq 3\sigma+\e,\\
            \f{3\sigma-\e}{2}\leq &\f{1}{R}\int_{A_{2R,R}}W(u)\,dz\leq \f{3\sigma+\e}{2},\\
            \f{3\sigma-\e}{2}\leq &\f{1}{R}\int_{A_{2R,R}}\f12|\na_T u|^2\,dz\leq \f{3\sigma+\e}{2},
        \end{align*}
    where $A_{2R,R}$ represents the annulus $\{z\in\BR^2: R<|z|<2R\}$. Fixing $\e$ as a small parameter to be determined later, by Fubini's theorem we can find $R_0(\e)\in(R(\e),2R(\e))$ such that 
    \begin{equation}
        \label{energy est on bdy R0} 3\sigma-\e\leq \int_{\pa B_{R_0}}\left(\f12|\na_T u|^2+W(u) \right) \,d\ch\leq 3\sigma+\e.
    \end{equation}
    \begin{equation}
        \label{est:ene in BR0}
        3\sigma-\e\leq \f1{R_0}\int_{ B_{R_0}}\left(\f12|\na u|^2+W(u) \right) \,dz\leq 3\sigma+\e.
    \end{equation}
    As our analysis progresses, we will gradually determine the conditions on the smallness of $\e$.

\section{``well-behaved" profile of $u(z)$ on $\pa {B_{R_0}}$.}\label{sec: bdy data}
Firstly, we take a fixed small constant $\delta=\delta(W)$ which is independent of $\e$, to be determined later. We keep in mind that in certain places of our analysis, $\e$ is considered to be significantly smaller than $\delta$.  From the estimate \eqref{energy est on bdy R0} and Lemma \ref{lemma: potential energy estimate} we conclude that 
    \begin{equation}\label{est: meas of interface on bdy R0}
          \mathcal{H}^1(\{ z: \min\limits_{i=1,2,3}|u(z)-a_i|>\delta\}\cap \pa B_{R_0})\leq \frac{C}{\delta^2},  
    \end{equation}
for some $C=C(W)$. In other words, most of points on $\pa B_{R_0}$ are close to one of the phase $a_i$. We set
\begin{equation*}
    Y_i:=\{z: |u(z)-a_i|\leq \delta\}\cap \pa B_{R_0},\quad i=1,2,3. 
\end{equation*}
Then \eqref{est: meas of interface on bdy R0} implies 
\begin{equation}
    \label{est: meas of Yi}
    \sum\limits_{i=1}^3\ch(Y_i) \geq 2\pi R_0-\f{C}{\delta^2}.
\end{equation}

\begin{lemma}\label{lemma: Yi nonempty}
    For any $i\in\{1,2,3\}$, $Y_i\neq \emptyset$.
\end{lemma}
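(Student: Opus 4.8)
\emph{Proof strategy.} The plan is to argue by contradiction: suppose some phase is absent, say $Y_3=\emptyset$ (the three indices are symmetric). Put $B:=\{z:\min_i|u(z)-a_i|>\delta\}\cap\pa B_{R_0}$, so that \eqref{est: meas of interface on bdy R0} gives $\ch(B)\le C/\delta^2$; taking $\delta<\tfrac12\min_{i\neq j}|a_i-a_j|$ makes $Y_1,Y_2,Y_3$ pairwise disjoint, with $\pa B_{R_0}=Y_1\sqcup Y_2\sqcup B$. Since $u$ is continuous and, on $B^c$, lies within $\delta$ of exactly one phase, the index of that phase is locally constant on $\pa B_{R_0}\setminus B$; hence each connected component of $\pa B_{R_0}\setminus B$ has a well-defined \emph{label} $i\in\{1,2,3\}$. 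I call a component $J$ of $B$ a \emph{transition} when its two neighboring components carry different labels. As $Y_3=\emptyset$, the labels take at most two values, so the number $N$ of transitions around the circle is even, and $N\ge2$ unless $Y_1$ or $Y_2$ is also empty, in which case $N=0$.

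The first step would be to bound $N$ above. At each endpoint of a transition $J=(s_-,s_+)$ the map $u$ lies exactly at distance $\delta$ from the label of the adjacent component, so $|u(s_-)-a_i|=|u(s_+)-a_j|=\delta$ with $i\neq j$, and Lemma~\ref{lemma: 1D energy estimate}---whose lower bound applies a fortiori to the (possibly non-minimizing) arc $u|_J$ parametrized by arclength---gives $\int_J\big(\tfrac12|\na_T u|^2+W(u)\big)\,d\ch\ge\sigma-C_W\delta^2$. Summing over the disjoint transitions and invoking the upper bound in \eqref{energy est on bdy R0}, $N(\sigma-C_W\delta^2)\le 3\sigma+\e$, so $N\le3$ once $\e,\delta$ are small; with parity, $N\in\{0,2\}$. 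In particular $u|_{\pa B_{R_0}}$ is, outside the set $B$ of bounded length $C/\delta^2$, an arc $\delta$-close to $a_1$ together with (when $N=2$) an arc $\delta$-close to $a_2$, the two separated by at most two transitions located near two points $p,q\in\pa B_{R_0}$.

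The second step would be to contradict the minimality of $u$ with a competitor $v\in W^{1,2}(B_{R_0})$, $v=u$ on $\pa B_{R_0}$, carrying \emph{at most one interface}: when $N=2$ one places along the chord $\overline{pq}$ a one-dimensional profile $U_{12}$ truncated (using its exponential decay to $a_1,a_2$) to a strip of width $O(\log\tfrac1\delta)$, sets $v\equiv a_1$ and $v\equiv a_2$ on the two sides outside the strip, interpolates radially in a unit-width annulus at $\pa B_{R_0}$ to match $u$, and patches $v$ in two disks of radius $O(\log\tfrac1\delta)$ about $p,q$; when $N=0$ one drops the interface and sets $v\equiv a_1$ outside the matching annulus. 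This is precisely the construction of \cite[Proposition~3.3]{sandier2023allen}, and its energy is at most $\sigma|\overline{pq}|\le2\sigma R_0$ along the strip, $C\delta^2R_0$ from $W(v)$ and the radial part of $\na v$ on the good part of the annulus, $C\int_{\pa B_{R_0}}\tfrac12|\na_T u|^2\,d\ch\le C$ from the tangential part of $\na v$ there (by \eqref{energy est on bdy R0}), and $C(\delta,M,W)$ from $B$ and the two patches (using $\ch(B)\le C/\delta^2$ and $|u|+|\na u|\le M$ from \eqref{uniform C1 bound}). Thus $E(v,B_{R_0})\le 2\sigma R_0+C\delta^2R_0+C(\delta,M,W)$. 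Comparing with the lower bound \eqref{est:ene in BR0}, $(3\sigma-\e)R_0\le E(u,B_{R_0})\le E(v,B_{R_0})$, so $(\sigma-\e-C\delta^2)R_0\le C(\delta,M,W)$; once $\e,\delta$ are small enough that $\e+C\delta^2<\sigma/2$ this bounds $R_0$ by a constant depending only on $W,M$, contradicting $R_0>R(\e)$ once $\e$ is taken small enough. Hence $Y_3\neq\emptyset$, and by symmetry $Y_1,Y_2\neq\emptyset$.

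The main obstacle I expect is the $N=2$ competitor: splicing the thin straight interface along $\overline{pq}$ into the $R_0$-dependent boundary trace $u|_{\pa B_{R_0}}$ near the transition points while keeping the splicing energy bounded independently of $R_0$. This is exactly the gluing already carried out in \cite[Proposition~3.3]{sandier2023allen}, which I would quote rather than reprove; the remainder is bookkeeping with Lemmas~\ref{lemma: potential energy estimate}--\ref{lemma: 1D energy estimate} and the two-sided energy bounds \eqref{energy est on bdy R0}, \eqref{est:ene in BR0}.
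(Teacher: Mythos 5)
Your proposal is correct and follows essentially the same route as the paper: rule out extra phase transitions on $\pa B_{R_0}$ via the 1D lower bound of Lemma~\ref{lemma: 1D energy estimate} against the budget \eqref{energy est on bdy R0}, then contradict \eqref{est:ene in BR0} with a competitor that is either constant (your $N=0$, the paper's Case 1) or carries a single straight $U_{12}$-interface along a chord with energy $2\sigma R_0+o(R_0)$ (your $N=2$, the paper's Case 2), quoting the gluing construction of \cite[Proposition 3.3]{sandier2023allen}. The only differences are cosmetic (your parity/counting bookkeeping for $N$, and a strip width $O(\log\tfrac1\delta)$ versus the paper's $R_0^{1/3}$), neither of which changes the argument.
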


\begin{proof}
    We need to rule out the following two cases. 
    \begin{case}
        Two of $Y_i$'s are empty sets. Without loss of generality, assume
        \begin{equation*}
             Y_2=Y_3=\emptyset.
        \end{equation*}
        We construct the following energy competitor in $B_{R_0}$ by writing in polar system $z=(x,y)=re^{i\t}$:
        \begin{equation*}
            \tilde{u}(r,\theta)=\begin{cases}
                u(R_0,\theta), &r=R_0\\
             (1+r-R_0)u(R_0,\theta)+(R_0-r)a_1, & r\in(R_0-1,R_0),\\
             a_1,& r\in[0,R_0-1].   
            \end{cases}
        \end{equation*}
    In view of Lemma \ref{lemma: potential energy estimate}, we have that 
    \begin{equation}\label{est: 1 phase, radial der, annulus}
        \begin{split}
            &\int_{A_{R_0,R_0-1}} |\pa_r \tilde{u}|^2\,dz\\
            =&\int_{R_0-1}^{R_0} \int_0^{2\pi} |u(R_0,\theta)-a_1|^2 r\,dr\,d\theta\\
            \leq & \int_{Y_1}|u-a_1|^2\,d\ch +\int_{\pa B_{R_0}\backslash Y_1} |u-a_1|^2\,d\ch\\
            \leq & \int_{Y_1} CW(u)\,d\ch+ \f{C}{\delta^2}M^2\\
            \leq & C(\delta, W). 
        \end{split} 
    \end{equation}
    \begin{equation}\label{est: 1 phase, potential, annulus}
        \int_{A_{R_0,R_0-1}} W(u)\,dz
        \leq \int_{A_{R_0,R_0-1}} C|u-a_1|^2\,dz
        \leq  C(\delta,W).
    \end{equation}
    \begin{equation}\label{est: 1 phase, tan der, annulus}
    \int_{A_{R_0,R_0-1}} |\pa_T\tu|^2\,dz
        \leq \int_{\pa B_{R_0}} |\pa_T \tu|^2\,d\ch
        \leq  3\sigma.
    \end{equation}
    Adding \eqref{est: 1 phase, radial der, annulus}, \eqref{est: 1 phase, potential, annulus} and \eqref{est: 1 phase, tan der, annulus} together gives
    \begin{equation}
        \label{est: 1 phase, BR0}
        \int_{B_{R_0}} \left(\f12|\na u|^2+W(u)\right)\,dz\leq C(\delta,W),
    \end{equation}
    which yields a contradiction with \eqref{est:ene in BR0} given that $R_0$ can be arranged to be much larger than $C(\delta,W)$. As a result, we eliminate Case 1.  
    \end{case}

    \begin{case}
        One of $Y_i$'s is empty. Assume without loss of generality that $Y_3=\varnothing$. Then on $\pa B_{R_0}$, essentially there are only two phases appearing, namely $a_1$ and $a_2$. There exist $z_1,\ z_2\in \pa B_{R_0}$ such that 
        \begin{equation}\label{u at z1 z2}
            |u(z_1)-a_1|\leq \delta,\quad |u(z_2)-a_2|\leq \delta, \quad\text{for } z_i:=R_0e^{i\theta_i},\ i=1,2.
        \end{equation}
        where $\t_i$ denotes the polar angle of $z_i$ ($i=1,2$). Then $z_1$ and $z_2$ split the circle $\pa B_{R_0}$ into two arcs, denoted by $A$ and $B$ respectively. Then we define 
        \begin{align*}
            &A_1:=\{z\in A: |u(z)-a_1|\leq \delta\},\\
            &A_2:=\{z\in A: |u(z)-a_2|\leq \d\}.
        \end{align*}
        By \eqref{u at z1 z2} and the continuity of $u$ we know that $A_1$ and $A_2$ are two disjoint non-empty closed sets in $A$. Set
        \begin{align*}
            {z_A^1}&:= \text{ the point from $A_1$ that is closest to $A_2$},\\
            {z_A^2}&:= \text{ the point from $A_2$ that is closest to $A_1$}.
        \end{align*}
        Here ``closest" refers to the distance along $\pa B_{R_0}$. Now we have the key observation that on $A$ the arcs $\arc{z_1z_A^1}$ and $\arc{z_2z_A^2}$ do not overlap. Suppose by contradiction this is not the case, we have
        \begin{equation*}
            z_A^2\in \arc{z_1z_A^1}\cap A,\quad z_A^1\in \arc{z_2z_A^2}\cap A.
        \end{equation*}
        If we start from $z_1$ and traverse along the clockwise direction of $\partial B_{R_0}$ for a complete circle, we will encounter at least four phase transitions occurring between the two phases $a_1$ and $a_2$, namely 
        \begin{equation*}
            z_1\ri z_2\ri z_A^1\ri z_A^2\ri z_1.
        \end{equation*}
        Then we have 
        \beqo
        \int_{\pa B_{R_0}} \left(\f12|\na_T u|^2+W(u)\right)\,d\ch \geq 4\sigma-C\delta^2,
        \eeqo
        which contradicts \eqref{energy est on bdy R0}. Therefore $\arc{z_1z_A^1}$ and $\arc{z_2z_A^2}$ cannot overlap. Moreover, appealing to \eqref{uniform C1 bound} we have that 
        \beqo
        \cl(\arc{z_A^1z_A^2})\geq C,
        \eeqo
        for some positive constant $C$ depending on $|a_1-a_2|$ and the uniform bound of $|\na u|$. 
        
        Now for any $z\in \arc{z_A^1z_A^2}=A\setminus\left( \arc{z_1z_A^1}\cup \arc{z_2z_A^2} \right)$, it holds
        \beqo
        |u(z)-a_i|>\delta, \quad \forall i\in\{1,2,3\}.
        \eeqo
        Utilizing the energy bound \eqref{energy est on bdy R0} and the hypothesis (H1), we obtain
        \beqo
        \ch(\arc{z_A^1z_A^2})\leq \frac{C}{\delta^2},
        \eeqo
        for some constant $C$ depending only on $W$. 

        In the same manner, on $B$ we can define
        \begin{align*}
            &B_1:=\{z\in B: |u(z)-a_1|\leq \delta\},\\
            &B_2:=\{z\in B: |u(z)-a_2|\leq \d\}.
        \end{align*}
        and 
        \begin{align*}
            {z_B^1}&:= \text{ the point from $B_1$ that is closest to $B_2$},\\
            {z_B^2}&:= \text{ the point from $B_2$ that is closest to $B_1$}.
        \end{align*}
        By the same argument we have $\arc{z_1z_B^1}$ and $\arc{z_2z_B^2}$ do not overlap and 
        \beqo
        C_1\leq \ch(\arc{z_B^1z_B^2})\leq \frac{C_2}{\delta^2},
        \eeqo
        for some positive constants $C_1,\,C_2$. Denote the polar angles for $z_A^1,\,z_A^2,\,z_B^1,\,z_B^2$ by $\t_A^1,\,\t_A^2,\,\t_B^1,\,\t_B^2$ respectively. Set
        \beqo
         \Theta_1:=\arc{\t_B^1 \t_1}\cup\arc{\t_1\t_A^1},\quad \Theta_2:=\arc{\t_A^2 \t_2}\cup\arc{\t_2\t_B^2},\quad \Theta_{0}:=\arc{\t_A^1 \t_A^2}\cup\arc{\t_B^1\t_B^2}.
        \eeqo
        Here $\Theta_i\ (i=1,2)$ approximately represents the set of polar angles for $a_i$ phase points on $\pa B_{R_0}$, where $\Theta_0$ represents the set of polar angles for the transition layer. The size of $\Theta_0$ can be controlled by
        \begin{equation}
            \label{est: size of Theta 0}
            \f{C_1}{R_0}\leq |\Theta_0| \leq \f{C_2}{R_0\delta^2}. 
        \end{equation}
        Now we first define the following function on $\pa B_{R_0-1}$:
        \begin{equation}
            \label{2phase: competitor on bdy B R0-1} \tilde u(R_0-1,\t)=\begin{cases}
                a_1, & \theta \in \Theta_1,\\
                a_2, & \theta\in \Theta_2 ,\\
                \text{smooth connection of }a_1,\,a_2  & \t\in  \Theta_0.
            \end{cases}
        \end{equation}
        Then we extend the energy competitor $\tilde{u}$ to $B_{R_0}$,
        \begin{equation}
            \label{2phase: competitor on B R0}
            \tilde{u}(r,\t):=\begin{cases}
                u(R_0,\t), & r=R_0\\
                (r-R_0+1)u(R_0,\t)+(R_0-r)\tilde{u}(R_0-1,\t) , & r\in(R_0-1,R_0)\\
                \tilde{u}(R_0-1,\t), & r=R_0-1\\
                \text{energy minimizer},  &r\in [0,R_0-1)
            \end{cases}
        \end{equation}
        where we require that $\tilde{u}$ minimizes the energy $E(\cdot, B_{R_0-1})$ with respect to the Dirichlet boundary constraint $\tilde{u}|_{B_{R_0-1}}$.

        We first estimate the energy in the annulus $A_{R_0,R_0-1}$.
        \begin{equation*}
        \begin{split}
            &\int_{A_{R_0,R_0-1}} \left(\f12|\na \tilde{u}|^2+W(\tilde{u})\right)\,dz\\
        =&\left(\int_{A_{R_0,R_0-1}\cap\{\t\in \Theta_1\}} +\int_{A_{R_0,R_0-1}\cap\{\t\in\Theta_2\} }+\int_{A_{R_0,R_0-1}\cap\{\t\in \Theta_0\}}\right) \left(\f12|\na \tilde{u}|^2+W(\tilde{u})\right)\,dz.
        \end{split}
 \end{equation*}
From \eqref{est: size of Theta 0} and \eqref{uniform C1 bound} if follows that 
\beqo
\int_{A_{R_0,R_0-1}\cap\{ \t\in \Theta_0\}} \left(\f12|\na \tilde{u}|^2+W(\tilde{u})\right)\,dz\leq C(\delta,W).
\eeqo

For any $\theta_0\in\Theta_1$, if $|u(R_0,\theta_0)-a_1|< \delta$, then
\beqo
|\tilde{u}(r,\theta_0)-a_1|\leq |u(R_0,\t_0)-a_1|<\delta,\quad \forall r\in(R_0-1,R_0),
\eeqo
which implies 
\beqo
c_1|\tilde{u}(r,\theta_0)-a_1|^2\leq W(\tilde{u}(r,\theta_0))\leq C W(u(R_0,\t_0)), \quad \text{ for some }C=C(W),\ \forall r\in(R_0-1,R_0).
\eeqo
On the other hand if $|u(R_0,\theta_0)-a_1|\geq \delta$, together with the definition of $\Theta_1$ it holds that 
\beqo
|u(R_0,\t_0)-a_i|\geq \delta, \forall i\in\{1,2,3\},
\eeqo
and 
\beqo
\max\{|\tilde{u}(r,\t_0)-a_1|^2,\ W(\tilde{u}(r,\theta_0))\}\leq \frac{C}{\delta^2} W(u(R_0,\t_0)),\quad \forall r\in(R_0-1,R_0).
\eeqo
Following the similar computation as in \eqref{est: 1 phase, radial der, annulus}, \eqref{est: 1 phase, tan der, annulus} and \eqref{est: 1 phase, potential, annulus}, we obtain

\begin{align*}
    &\int_{A_{R_0,R_0-1}\cap \{\t\in\Theta_1\}} |\pa_r \tilde{u}|^2\,dz\\
            =&\int_{R_0-1}^{R_0} \int_{\Theta_1} |u(R_0,\theta)-a_1|^2 r\,dr\,d\theta\\
            \leq & C(\delta,W) R_0\int_{\Theta_1} W(u(R_0,\theta))\,d\t\\
            \leq & C(\delta,W).
\end{align*}

\begin{equation*}
    \int_{A_{R_0,R_0-1}\cap\{\t\in\Theta_1\}} |\pa_T\tu|^2\,dz
        \leq \int_{\pa B_{R_0}} |\pa_T \tu|^2\,d\ch
        \leq  3\sigma.
\end{equation*}

\begin{align*}
&\int_{A_{R_0,R_0-1}\cap\{\t\in \Theta_1\} }W(\tilde{u})\,dz\\
\leq & C(\delta,W) \int_{R_0-1}^{R_0}dr\int_{\Theta_1} W(u(R_0,\theta))r\,d\theta\leq C(\delta,W).
\end{align*}

Summing up the inequalities above implies
\beqo
\int_{A_{R_0,R_0-1}\cap\{\t\in \Theta_1\}} \left(\f12|\na u|^2+W(u)\right)\,dz\leq C(\delta, W). 
\eeqo
This estimate also applies to the energy on $A_{R_0,R_0-1}\cap\{\t\in \Theta_2\}$. Consequently we have 
\begin{equation}\label{ene: 2 phase annulus}
    \int_{A_{R_0,R_0-1}} \left( \f12|\na \tilde{u}|^2+W(\tilde{u}) \right)\,dz\leq C(\delta,W),
\end{equation}

       We are left with the the estimation of $E(\tu,B_{R_0-1})$. Set
        \begin{align*}
            P_1:=(R_0-1)e^{i\f{\t_A^1+\t_A^2}{2}},\quad P_2:=(R_0-1)e^{i\f{\t_B^1+\t_B^2}{2}}.
        \end{align*}
        Let $P_1P_2$ intersects with $\pa B_{R_0-2}$ at two points $P_1'$ and $P_2'$, respectively. Up to a possible rotation, we can assume $P_1P_2$ is parallel to the $x$-axis and their shared $y$ coordinates is denoted by $y_0$. Without loss of generality, we suppose $y_0\leq 0$, with the $a_2$ phase part of $\pa B_{R_0-1}$ positioned above $\{y=y_0\}$, while the $a_1$ phase part is positioned below $\{y=y_0\}$ (see Figure \ref{fig 2phase upper bdd}).

        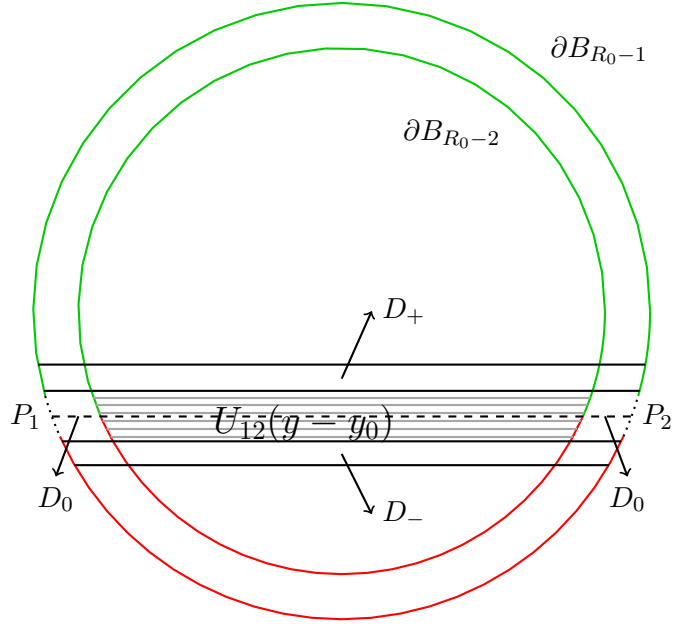
\begin{figure}[ht]
\centering
\begin{tikzpicture}[thick]
\draw [black!20!green,domain=0:204] plot ({3.5*cos(\x)}, {3.5*sin(\x)});
\draw [black!20!green,domain=336:360] plot ({3.5*cos(\x)}, {3.5*sin(\x)});
\draw [red,domain=204:336] plot ({3.5*cos(\x)}, {3.5*sin(\x)});

\draw [black!20!green,domain=0:196] plot ({4.1*cos(\x)}, {4.1*sin(\x)});
\draw [black!20!green,domain=344:360] plot ({4.1*cos(\x)}, {4.1*sin(\x)});
\draw [dotted,domain=196:204] plot ({4.1*cos(\x)}, {4.1*sin(\x)});
\draw [red,domain=204:336] plot ({4.1*cos(\x)}, {4.1*sin(\x)});
\draw [dotted,domain=336:344] plot ({4.1*cos(\x)}, {4.1*sin(\x)});
\draw ({4.1*cos(50)}, {4.1*sin(50)}) node [above right]{$\pa B_{R_0-1}$};
\draw ({3.5*cos(50)}, {3.5*sin(50)}) node [below left]{$\pa B_{R_0-2}$};
\draw[dashed]  ({4.1*cos(200)},{4.1*sin(200)}) node[left]{$P_1$}--({4.1*cos(340)},{4.1*sin(340)})node[right]{$P_2$};
\draw  ({4.1*cos(205)},{4.1*sin(205)})--({4.1*cos(335)},{4.1*sin(335)});
\draw  ({4.1*cos(195)},{4.1*sin(195)})--({4.1*cos(345)},{4.1*sin(345)});
\draw  ({4.1*cos(210)},{4.1*sin(210)})--({4.1*cos(330)},{4.1*sin(330)});
\draw  ({4.1*cos(190)},{4.1*sin(190)})--({4.1*cos(350)},{4.1*sin(350)});
\draw[->] (0,-0.9)--(0.4,0) node[right] {$D_+$};
\draw[->] (0,-1.9)--(0.4,-2.7) node[right] {$D_-$};

\draw[->] (-3.5,-1.4)--(-3.8,-2.2) node[below] {$D_0$};
\draw[->] (3.5,-1.4)--(3.8,-2.2) node[below] {$D_0$};

\path ({3.5*cos(210)},{3.5*sin(210)}) coordinate (M);
\path ({3.5*cos(198)},{3.5*sin(198)}) coordinate (P);
\path ({3.5*cos(330)},{3.5*sin(330)}) coordinate (N);
\fill[pattern={Lines[angle=90,distance=3pt]}, pattern color=gray!70] (M)--(N) arc[start angle=330, end angle=342, radius=3.5]--(P) arc[start angle=198, end angle=210, radius=3.5]  node at (-0.5,-1.5) {\Large{$U_{12}(y-y_0)$}};
 \end{tikzpicture}
\caption{Energy upper bound in $B_{R_0-1}$ for Case 2. Red: $a_1$; Green: $a_2$.}
\label{fig 2phase upper bdd}
\end{figure}

        We pick an 1D minimizing connection $U_{12}$ defined in the Hypothesis (H2). Moreover, it is well-known that $U_{ij}$ will converge to $a_1$ or $a_2$ exponentially (see for example \cite[Proposition 2.4]{afs-book}), 
        \begin{equation*}
            |U_{12}(\eta)-a_2|\leq Ke^{-k\eta},\quad |U_{12}(\eta)-a_1|\leq Ke^{k\eta}, 
        \end{equation*}
        for some constants $K,\,k>0$. Let $h=R_0^{\f13}$. If $y_0\leq -R_0+3h$, elementary geometry implies that $\ch(\arc{z_A^1z_B^1})\sim O(R_0^{\f23})$, which means the $a_1$ phase on $\pa B_{R_0-1}$ has much smaller measure compared to that of the $a_2$ phase. Hence we can invoke similar analysis as in Case 1 to show that  $E(\tu, B_{R_0-1})\leq CR_0^{\f23}\ll O(R_0)$. This, together with \eqref{ene: 2 phase annulus}, yields a contradiction with \eqref{est:ene in BR0}. Therefore, we only consider the case $y_0>-R_0+3h$.  Set
        \begin{equation*}
            v(x,y)=\begin{cases}
            U_{12}(y-y_0), & (x,y)\in \overline{B}_{R_0-2}\cap\{y_0-h\leq y\leq y_0+h\},\\
             a_2, & (x,y)\in B_{R_0-1}\cap\{y\geq y_0+2h\},\\
            a_1,& (x,y)\in B_{R_0-1}\cap \{y\leq y_0-2h\}.
            \end{cases}
        \end{equation*}
        On $B_{R_0-2}\cap\{y=y_0-h\}$ ( or $B_{R_0-2}\cap\{y=y_0+h\}$), by the exponential decay of $U_{12}$ we have 
        \beqo
       |v(x,y)-a_1\,(\text{ or }a_2)|\leq Ke^{-kR_0^{\f13}}.
        \eeqo
        And for $(x,y)\in D_0:= A_{R_0-1,R_0-2}\cap\{y_0-h\leq y\leq y_0+h\}$, we take $v(x,y)$ to linearly interpolate in $x$ between $v|_{\pa B_{R_0-1}}$ and $v|_{\pa B_{R_0-2}}$ for each $y$. Note that $D_0$ consists of two small regions with measure controlled by $O(R_0^{\f13})$. For $(x,y)\in  D_{\pm}:= B_{R_0-1}\cap \{y_0\pm 2h\leq y\leq y_0\pm h\}$, let $v(x,y)$ linearly  interpolate in $y$ between $v(x,y_0\pm 2h)$ and $v(x,y_0\pm h)$. See Figure \ref{fig 2phase upper bdd} for an illustration for all these subdomains. We estimate energy of $v$:
        \begin{eqnarray*}
            E(v,\overline{B}_{R_0-2}\cap\{y_0-h\leq y\leq y_0+h\})&\leq& \sigma *\ch(P_1P_2)\leq 2R_0\sigma,\\
            E(v,B_{R_0-1}\cap \{y\geq y_0+2h\}\cap\{y\leq y_0-2h\}) &=& 0,\\
            E(v,D_0)&\leq& C \mathcal{H}^2(D_0)\leq CR_0^{\f13},\\
            E(v,D_{\pm}) &\leq & C h\cdot\ch(P_1P_2)\cdot K^2e^{-2kR_0^{\f13}}=o(R_0),
        \end{eqnarray*}
        given $R_0$ chosen to be sufficiently large. Altogether,
        \begin{equation*}
        E(v, B_{R_0-1})\leq 2 R_0\sigma+o(R_0). 
        \end{equation*}
        Utilizing the minimality of $\tu$ in $B_{R_0-1}$ and \eqref{ene: 2 phase annulus}, we obtain
\begin{equation*}
    \int_{B_{R_0}}  \left( \f12|\na \tilde{u}|^2+W(\tilde{u}) \right)\,dz\leq 2R_0\sigma+o(R_0),
\end{equation*}
which yields a contradiction with \eqref{energy est on bdy R0} when we select $R_0$ to be large enough. Therefore Case 2 is also eliminated and we get 
\beqo
Y_i\neq \varnothing, \quad \forall i\in\{1,2,3\}.
\eeqo
The proof of Lemma \ref{lemma: Yi nonempty} is complete.  

\end{case}
\end{proof}

From Lemma \ref{lemma: Yi nonempty}, there are $z_1,\,z_2,\,z_3 \in \pa B_{R_0}$ such that 
\beqo
|u(z_i)-a_i|\leq \delta,\quad \forall i\in\{1,2,3\}.
\eeqo

Next we apply the same analysis for the two-phase scenario to obtain the existence of three arcs $\arc{z_l^1z_r^1}$, $\arc{z_l^2z_r^2}$, $\arc{z_l^3z_r^3}$ that satisfies the following properties,
\begin{enumerate}
    \item $z_i\in \arc{z_l^iz_r^i}$, for $i=1,2,3$.
    \item $|u(z_l^i)-a_i|\leq \delta$, $|u(z_r^i)-a_i|\leq \delta$, for $i=1,2,3$.
    \item $\arc{z_l^1z_r^1}$, $\arc{z_l^2z_r^2}$, $\arc{z_l^3z_r^3}$ are disjoint pairwisely. For each $i\in\{1,2,3\}$, we have
    \beqo
    z\in \arc{z_l^iz_r^i} \ \Rightarrow\  |u(z)-a_j|>\delta,\ \ \forall j\neq i.
    \eeqo
    \item Suppose $\arc{z_r^1z_l^2},$ $\arc{z_r^2z_l^3}$, $\arc{z_r^3z_l^1}$ are the three small arcs that separating the arcs $\arc{z_l^1z_r^1}$, $\arc{z_l^2z_r^2}$ and $\arc{z_l^3z_r^3}$. It holds that 
    \beqo
    |u(z)-a_i|>\delta, \quad \forall i\in\{1,2,3\},\ \forall z\in \arc{z_r^1z_l^2}\cup\arc{z_r^2z_l^3}\cup \arc{z_r^3z_l^1},
    \eeqo
    And consequently,
    \beq\label{meas est for transition arcs}
    C_1\leq \ch(\arc{z_r^1z_l^2}\cup\arc{z_r^2z_l^3}\cup \arc{z_r^3z_l^1})\leq \f{C_2}{\delta^2}
    \eeq
\end{enumerate}

We introduce the following notations for convenience.
\begin{align*}
&I_i:= \arc{z_l^iz_r^i} \text{ denotes the set of phase $a_i$ on $\pa B_{R_0}$ }, i=1,2,3;\\
&I_{12}=  \arc{z_r^1z_l^2},\ I_{23}=\arc{z_r^2z_l^3},\ I_{31}=\arc{z_r^3z_l^1} \text{ denote transitional arcs between phases},\\
&d_1:=\min\{\ch(I_{ij}),\, i\neq j\in \{1,2,3\}\},\quad d_2:=\max\{\ch(I_{ij}),\, i\neq j\in\{1,2,3\}\}.
\end{align*}
Then \eqref{meas est for transition arcs} implies 
\beq\label{est d1 d2}
\f{C_1}{3}\leq d_1\leq d_2\leq \f{C_2}{\delta^2}. 
\eeq
By Property (2), there will be an approximate ``phase transition" inside each of the separating arcs $I_{12}$, $I_{23}$, $I_{31}$, and the energy on these arcs is estimated by
\begin{equation}\label{ene est:3 arcs}
\int_{I_{12}\cup I_{23}\cup I_{31}} \left(\f12|\pa_T u|^2+W(u)\right)\,d\ch\geq 3\sigma-C\delta^2,
\end{equation}
for some constant $C=C(W)$. Next we show that on each arc $I_i$, $u(z)$ will be uniformly close to $a_i$, with the distance controlled by $C\delta$.  

\begin{lemma}\label{lemma: on bdy, close to each phase}
    There exists a constant $C$ which only depends on $W$, such that for any $i\in\{1,2,3\}$ and $z\in I_i$, it holds that 
    \beqo
    |u(\theta)-a_i|< C\delta.  
    \eeqo
\end{lemma}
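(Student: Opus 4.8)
The strategy is to show that $u$ restricted to the arc $I_i$ cannot wander far from $a_i$ because doing so would force an extra amount of transition energy along $\pa B_{R_0}$, contradicting the upper bound \eqref{energy est on bdy R0}. Fix $i$ and suppose for contradiction that there is a point $z^*\in I_i$ with $|u(z^*)-a_i|\geq C\delta$ for a large constant $C$ to be chosen. By Property (2) of the arcs, $z^*$ is at $\ch$-distance at least $d_1\geq C_1/3$ from the endpoints $z_l^i,z_r^i$ of $I_i$ along $\pa B_{R_0}$ is \emph{not} guaranteed; rather what Property (2) gives is $|u(z)-a_j|>\delta$ for $j\neq i$ throughout $I_i$. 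So along $I_i$ the map $u$ stays $\delta$-away from the two other wells, but a priori could leave the $\delta$-neighborhood of $a_i$. The point is that if it does leave, it must come back: by the continuity of $u$ and the definitions of $z_l^i,z_r^i$ (the closest points of the $a_i$-set to the neighboring transition arcs), we know $|u(z_l^i)-a_i|\leq\delta$ and $|u(z_r^i)-a_i|\leq\delta$. Hence if $|u(z^*)-a_i|\geq 3\delta$ at some interior $z^*$, there are subarcs of $I_i$ on each side of $z^*$ on which $|u-a_i|$ travels between $\delta$ and $3\delta$ while staying $\geq\delta$ from $a_j$, $j\neq i$.

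On each such subarc, Lemma \ref{lemma: potential energy estimate} gives $W(u)\geq \tfrac12 c_W\delta^2$ pointwise (since $\min_j|u-a_j|\geq\delta$ there), and the $\tfrac12|\pa_T u|^2$ term is controlled below using the uniform gradient bound \eqref{uniform C1 bound}: the subarc must have $\ch$-length at least $c/M$ for some $c$ depending on $\delta$, hence a definite amount of potential energy $\geq \kappa\delta^2\cdot(c/M)$ accumulates there. More efficiently, one applies Lemma \ref{lemma: 1D energy estimate}: any portion of $\pa B_{R_0}$ (parametrized by arclength) on which $u$ runs from the $\delta$-sphere around $a_i$ out to distance $3\delta$ and back to the $\delta$-sphere around $a_i$, while staying away from the other wells, carries energy $\geq$ (a positive constant depending on $\delta$ but not on $R_0$). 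Combining this extra contribution from $I_i$ with the transition energy $\geq 3\sigma - C\delta^2$ already located on the three transition arcs $I_{12},I_{23},I_{31}$ (inequality \eqref{ene est:3 arcs}), the total tangential energy on $\pa B_{R_0}$ would exceed $3\sigma + c(\delta) - C\delta^2$. Choosing $\e$ small relative to $\delta$ (as allowed), and $\delta$ small enough that $c(\delta) > C\delta^2 + \e$... this does not quite close, because $c(\delta)\to 0$ as $\delta\to 0$ at the same rate; so the right bookkeeping is to fix the excursion amplitude at $C\delta$ with $C$ a \emph{large} absolute constant, so the excursion energy is $\gtrsim c_W C^2\delta^2 \cdot (\text{length})$ or, via Lemma \ref{lemma: 1D energy estimate} type arguments, a definite fraction of $\sigma$ once the amplitude $C\delta$ is of order $1$; then the contradiction with $3\sigma+\e$ is clean once $C\delta$ is comparable to $\delta_W$.

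Here is the cleaner route I would actually write. Suppose $|u(z^*)-a_i|\geq C\delta$ at some $z^*\in I_i$; we may assume $C\delta \leq \delta_W$ (if $C\delta$ already exceeds $\delta_W$, use $\delta_W$ in place of $C\delta$ throughout). Since $|u(z_l^i)-a_i|,|u(z_r^i)-a_i|\leq\delta < C\delta$ and $z^*$ lies between them on $I_i$, continuity yields two points $p,q\in I_i$ on opposite sides of $z^*$ with $|u(p)-a_i|=|u(q)-a_i|=\delta$ and $\delta\leq|u-a_i|\leq C\delta$ on the subarc between them containing $z^*$; on that subarc $|u-a_j|\geq\delta$ for $j\neq i$ by Property (2). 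Parametrizing $\pa B_{R_0}$ by arclength and applying Lemma \ref{lemma: potential energy estimate} on the portion where $|u-a_i|\geq \delta_W/2$ (nonempty, of $\ch$-length $\geq c(\delta_W)/M$ by \eqref{uniform C1 bound}), we get a lower bound $\int_{\arc{pq}}(\tfrac12|\pa_T u|^2 + W(u))\,d\ch \geq \mu$ for an absolute constant $\mu=\mu(W)>0$ independent of $\delta$ and $R_0$. Since $\arc{pq}\subset I_i$ is disjoint from all three transition arcs, adding this to \eqref{ene est:3 arcs} gives $\int_{\pa B_{R_0}}(\tfrac12|\pa_T u|^2+W(u))\,d\ch \geq 3\sigma - C'\delta^2 + \mu$, which contradicts \eqref{energy est on bdy R0} once $\e$ and $\delta$ are chosen so small that $C'\delta^2 + \e < \mu$. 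This proves $|u(z)-a_i| < C\delta$ on $I_i$ (taking $C$ so that $C\delta \leq \delta_W$ is consistent with the earlier smallness choices; otherwise the bound $|u-a_i|<\delta_W$ already does the job with $C=\delta_W/\delta$).

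\textbf{Main obstacle.} The delicate point is quantitative bookkeeping: the ``extra'' transition energy generated by an excursion of amplitude $C\delta$ naturally scales like $\delta^2$ when $C$ is fixed, which is the \emph{same} order as the error terms $C\delta^2$ in \eqref{ene est:3 arcs} and cannot beat them. The resolution — and the step requiring care — is to extract an excursion of \emph{fixed} amplitude of order $\delta_W$ (independent of $\delta$), using the uniform gradient bound to guarantee the excursion subarc has at least a fixed length on which $W(u)\geq \tfrac12 c_W(\delta_W/2)^2$, thereby producing a genuinely $O(1)$ energy surplus $\mu$ that dominates $C'\delta^2+\e$. One must also take care that the subarc of the excursion lies strictly inside $I_i$ and hence is disjoint from the transition arcs, so that \eqref{ene est:3 arcs} and the surplus add without double-counting; this is where Property (2) and the choice of $z_l^i,z_r^i$ as the closest $a_i$-points to the neighbors are used.
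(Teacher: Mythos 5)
There is a genuine gap: your argument proves a strictly weaker statement than the lemma. Your contradiction only fires when the excursion of $|u-a_i|$ reaches a \emph{fixed} amplitude $\delta_W/2$ — indeed, the set where $|u-a_i|\geq\delta_W/2$, on which you extract the surplus $\mu$, is empty unless $C\delta\geq\delta_W/2$. So what you actually rule out is $\max_{I_i}|u-a_i|\geq\delta_W/2$, yielding only the bound $|u-a_i|\lesssim\delta_W$ on $I_i$, a constant independent of $\delta$. The lemma requires $|u-a_i|<C(W)\delta$ with $C$ independent of $\delta$, and this sharper form is essential later: in Section \ref{sec: loc of interface} the threshold $\g=2C\delta$ for the diffuse interface must be made arbitrarily small by shrinking $\delta$, which your $O(\delta_W)$ bound does not permit. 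You correctly diagnosed the obstacle (an excursion of amplitude $C\delta$ naively produces only $O(\delta^2)$ surplus, the same order as the error in \eqref{ene est:3 arcs}), but the resolution is not to inflate the amplitude to $O(1)$; it is to make the surplus quadratic in the large constant $C$.

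The paper's device, which is the missing idea, is a Modica-type (Young's inequality) estimate along the arc: on the subarc from $z_l^i$ to the maximizing point $z^1$, where $W(u)\geq\f12 c_W|u-a_i|^2$ since $C\delta<\delta_W$,
\begin{equation*}
\f12|\pa_T u|^2+W(u)\ \geq\ \f12\big|\pa_T|u-a_i|\big|^2+\f12 c_W|u-a_i|^2\ \geq\ \sqrt{c_W}\,|u-a_i|\,\big|\pa_T|u-a_i|\big|\ =\ \f{\sqrt{c_W}}{2}\,\big|\pa_T\big(|u-a_i|^2\big)\big|,
\end{equation*}
whose integral dominates the total variation of $|u-a_i|^2$, namely $(C\delta)^2-\delta^2$, \emph{independently of the length of the subarc} (this is exactly what your pointwise-potential-times-length bookkeeping cannot achieve, since the length is only bounded below by $O(C\delta/M)$, giving a useless $O(C^3\delta^3)$). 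The resulting surplus $\f{\sqrt{c_W}(C^2-1)}{2}\delta^2$ is quadratic in $C$, so choosing $C=C(W)$ large enough it beats the fixed error $C(W)\delta^2$ in \eqref{ene est:3 arcs} and the arbitrarily small $\e$ in \eqref{energy est on bdy R0}, while $\delta$ can still be taken small enough that $C\delta<\delta_W$. Your disjointness observation (the excursion subarc lies inside $I_i$, hence does not overlap the transition arcs) is correct and is also used in the paper.
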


\begin{proof}
Let $C$ be a large constant to be determined later. Without loss of generality, we suppose by contradiction there exists $z^1\in I_1$ such that $|u(z^1)-a_1|= C\delta$ and 
\beqo
|u(z^1)-a_1|=\max\{|u(z)-a_1|, \ z\in I_1\}.
\eeqo
We can choose suitable constants $\delta$ and $C$ satisfying
\beq\label{relation C delta}
C\delta< \delta_W, 
\eeq
where $\delta_W$ is the constant in Lemma \ref{lemma: potential energy estimate}. As a result, for any $z\in \arc{z_l^1z^1}$, it holds that 
\beqo
W(u(z))\geq \f12c_W |u(z)-a_1|^2. 
\eeqo
We compute
\begin{align*}
    &\int_{\arc{z_l^1z^1}} \left( \f12|\pa_T u|^2+W(u) \right)\,d\ch\\
    \geq & \int_{\arc{z_l^1z^1}} \left( \f12|\pa_T |u(z)-a_1||^2+\f12 c_W|u(z)-a_1|^2 \right)\,d\ch\\
    \geq & \int_{\arc{z_l^1z^1}} \left( \sqrt{c_W} \cdot\f12 \pa_T(|u(z)-a_1|^2)  \right)\,d\ch\\
    \geq &\f{\sqrt{c_W}}{2} [(C\delta)^2-\delta^2]\\
    =&\f{\sqrt{c_W}(C^2-1)}{2} \delta^2.
\end{align*}
Combining this with \eqref{ene est:3 arcs}, we can select a sufficiently large $C$ (which does not affect \eqref{relation C delta} as we still have the freedom to choose $\delta$) to obtain 
\beqo
\begin{split}
&\int_{\pa B_{R_0}}\left(\f12|\pa_T u|^2+W(u)\right)\,d\ch\\
\geq & \int_{ I_{12}\cup I_{23}\cup I_{31} \cup \arc{z_l^1z^1}} \left(\f12|\pa_T u|^2+W(u)\right)\,d\ch\\
\geq & 3\sigma +C(W)\delta^2,
\end{split}
\eeqo
which yields a contradiction with \eqref{energy est on bdy R0}, as $\e$ can be chosen arbitrarily small in the beginning. This completes the proof of Lemma \ref{lemma: on bdy, close to each phase}.
\end{proof}

\section{Refined energy upper/lower bound in $B_{R_0}$.}\label{sec: lower upper bdd}

We take $A$, $B$, $C$ to be the middle points of the arcs $I_{12}$, $I_{13}$ and $I_{23}$, respectively. There exists a point $D\in \bar{B}_{R_0}$ such that $|DA|+|DB|+|DC|$ is minimized. It is well-known that if the triangle $ABC$ possesses internal angles that are all less than $\f{2\pi}{3}$, then $D$ is the point in the interior of $ABC$ such that
$$
\angle{ADB}=\angle{BDC}=\angle{ADC}=\f{2\pi}{3}.
$$
However, if one angle, say $\angle BAC$ is greater than or equal to $\f{2\pi}{3}$, then $D$ coincide with the vertex $A$ of this triangle.

We define the triod
\begin{equation*}
    T_{ABC}=DA\cup DB\cup DC.
\end{equation*} 
Then we invoke the energy upper bound established in \cite[formula (3.19)]{sandier2023allen}, which is written in the following proposition within our specific context.   

\begin{proposition}[Energy upper bound]\label{prop: upper bound}
    There exist $C>0$ and $\alpha\in(0,1)$, both of which are independent of $\e$ and $R_0$, such that when $R_0$ is sufficiently large, the following energy upper bound holds:
    \begin{equation}\label{est: upper bound}
         \int_{B_{R_0}} \left( \f12|\na u|^2+W(u) \right)\,dz\leq \sigma(|DA|+|DB|+|DC|) +CR_0^\alpha. 
    \end{equation}
\end{proposition}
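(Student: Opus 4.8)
The plan is to construct an explicit competitor $\tilde u$ on $B_{R_0}$ agreeing with $u$ on $\partial B_{R_0}$ and whose energy is at most $\sigma(|DA|+|DB|+|DC|)+CR_0^\alpha$; the bound then follows from the minimality of $u$ (Definition \ref{def: minimizing sol}). The competitor is built in two stages. First, on a thin annulus $A_{R_0,R_0-1}$ we interpolate between the actual boundary data $u(R_0,\theta)$ and a \emph{cleaned-up} boundary datum $\tilde u(R_0-1,\theta)$ on $\partial B_{R_0-1}$. This cleaned datum is taken to be exactly $a_i$ on the (rescaled) arcs $I_i$ and a fixed one-dimensional profile $U_{ij}$-type smooth connection on the transitional arcs $I_{ij}$; by Lemma \ref{lemma: on bdy, close to each phase} the genuine datum is within $C\delta$ of $a_i$ on $I_i$, and on the transitional arcs it has bounded gradient and bounded length $\le C_2/\delta^2$ by \eqref{est d1 d2}, so the interpolation in the unit-width annulus costs only $O_\delta(1)$ energy, exactly as in the computations \eqref{est: 1 phase, radial der, annulus}--\eqref{ene: 2 phase annulus} already carried out for Cases 1 and 2. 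This is a constant (depending on $\delta$ and $W$ but not on $R_0$), hence certainly $\le CR_0^\alpha$.

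The second and main stage is to fill $B_{R_0-1}$ with a map whose energy is $\le \sigma(|DA'|+|DB'|+|DC'|)+CR_0^\alpha$, where $A',B',C'$ are the projections of $A,B,C$ onto $\partial B_{R_0-1}$ (which changes the triod length only by $O(1)$). Here I would follow the construction of \cite[Proposition 3.3]{sandier2023allen}: near each of the three segments $DA'$, $DB'$, $DC'$ place a copy of the corresponding optimal one-dimensional transition $U_{ij}$ in the normal direction, truncated to a strip of width $h=R_0^{1/3}$ (say), glued to the constant phases $a_i$ away from the strips; in each of the three ``sectors'' bounded by two segments the map is the constant phase $a_i$. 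The exponential convergence $|U_{ij}(\eta)-a_i|\le Ke^{-k|\eta|}$ (cited from \cite[Proposition 2.4]{afs-book}) guarantees that the mismatches where two strips or a strip and the boundary datum meet are of size $e^{-kR_0^{1/3}}$, and that the interpolation regions near the center $D$ and near $\partial B_{R_0-1}$ — each of area $O(R_0^{1/3})$ or $O(h)$ — contribute only $o(R_0)$, in fact $O(R_0^{\alpha})$ for a suitable $\alpha<1$. Summing: the three strips contribute $\sigma(|DA'|+|DB'|+|DC'|) + O(R_0^{1/3})$ (the $O(R_0^{1/3})$ absorbing the width-$h$ endpoints of each strip), the sectors contribute $0$, and the transition/interpolation zones contribute $O(R_0^{1/3})$; together with the annulus cost $O_\delta(1)$ and the $O(1)$ discrepancy between the triod in $B_{R_0-1}$ and in $B_{R_0}$, this gives the claimed bound with, e.g., $\alpha=1/3$.

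The main obstacle is the bookkeeping at the junction point $D$, where the three strips must be matched consistently. One must check that the three $U_{ij}$ profiles can be simultaneously defined in a neighborhood of $D$ without creating an energy concentration worse than $O(R_0^\alpha)$ — this is precisely the content of \cite[Proposition 3.3]{sandier2023allen}, and since we are invoking that construction I would quote it, merely verifying that our boundary datum on $\partial B_{R_0-1}$ (produced in stage one) matches the hypotheses there: it equals $a_i$ on arcs whose endpoints are within $C/\delta^2$ of $A',B',C'$, which is $o(R_0)$ and thus harmless. A secondary point to be careful about is that when $D$ coincides with a vertex (the degenerate case $\angle BAC \ge 2\pi/3$) the triod is really a single segment plus a corner, but the same construction applies with two strips meeting at an angle $\ge 2\pi/3$, and the interpolation cost near that corner is again $O(h)=O(R_0^{1/3})$. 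Finally, since $|A-A'|,|B-B'|,|C-C'|\le 1$ and $D$ moves by $O(1)$, $\big|(|DA|+|DB|+|DC|)-(|D'A'|+|D'B'|+|D'C'|)\big|\le C$, so replacing the $B_{R_0-1}$ triod length by the $B_{R_0}$ triod length $|DA|+|DB|+|DC|$ is absorbed into $CR_0^\alpha$, completing the proof.
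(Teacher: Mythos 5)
Your construction is essentially the paper's proof: the paper likewise builds the competitor by interpolating to cleaned-up boundary data on an inner circle and placing the one-dimensional profiles $U_{ij}$ in thin strips along $DA$, $DB$, $DC$ (constant phases in the three sectors), deferring the detailed junction estimate to \cite[Proposition 3.3]{sandier2023allen}. One minor quantitative point: with strips of width $h=R_0^{1/3}$ the overlap region at $D$ has area $O(h^2)=O(R_0^{2/3})$, so the exponent you actually obtain is $\alpha=2/3$ rather than $1/3$ --- immaterial here, since the proposition only requires some $\alpha\in(0,1)$.
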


\begin{proof}

The proof follows essentially the proof of \cite[Proposition 3.3]{sandier2023allen} (Upper bound construction part), and therefore, it is omitted herein. We only briefly outline the core idea. 

To construct an energy competitor $\tu$, we first define a well-behaved boundary data on $\pa B_{R1}$ such that $\tu$ equals to $a_i$ on the rescaled arc $\f{R_1}{R_0}I_i$ for any $i=1,2,3$, where $R_1=R_0-R_0^{\alpha}$. On the annulus $A_{R_0,R_1}$, let $\tu$ linearly interpolate between $\tu|_{\pa B_{R_0}}$ and $\tu|_{\pa B_{R_1}}$ with the energy $E(\tu,A_{R_0,R_1})$ controlled by $C(R_0^\alpha+R_0^{1-\alpha})$. For the construction inside $B_{R_1}$, near the interface, say $DA$, we set $\tu(z)=U_{12}(d(z,DA))$ within a thin rectangle with a width of $R_0^\alpha$ and the long side parallel to $DA$. Here $d(\cdot)$ is the signed distance function. The energy within these rectangles will be approximately $\sigma(|DA|+|DB|+|DC|)$. These three thin rectangles partition $B_{R-1}$ into three subdomains, within which we simply take $\tu$ equals to phase $a_i$ corresponding to the boundary data. Again, interpolations are required in a $R^{\alpha}$-outer layer of the rectangles, with the energy being proved to be negligible. This construction parallels that of $v(x,y)$ in $B_{R_0-1}$ for the upper bound estimate of Case 2 in the proof of Lemma \ref{lemma: Yi nonempty}.

We also mention that a similar approach to construct the energy competitor also appears in \cite[Appendix A]{alikakos2024triple}, for the special case when $D$ coincides with the origin. 

\end{proof}

\begin{corol}\label{corol: D is close to origin}
    By selecting sufficiently small $\e$ and sufficiently large $R_0$ in Section \ref{sec: existence}, we can guarantee that the point $D$ locates in a small neighborhood of the origin, i.e. $D\in B_{\tau R_0}$, where $\tau=\min\{\frac{1}{100}, \frac{C_1}{20}\}$ for the constant $C_1$ in \eqref{meas est for transition arcs}. Furthermore, it follows that $D$ locates in the interior of the triangle $ABC$ and $\angle{ADB}=\angle{BDC}=\angle{ADC}=\f{2\pi}{3}.$
\end{corol}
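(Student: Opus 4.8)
\textbf{Proof proposal for Corollary \ref{corol: D is close to origin}.}

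The plan is to argue by contradiction using the sharp two-sided energy estimate together with the lower bound of \cite[Proposition 3.1]{alikakos2024triple} (stated in this paper as the lower bound half of \eqref{intro: lower upper bdd}). Suppose $D\notin B_{\tau R_0}$. The first step is to record what Proposition \ref{prop: upper bound} gives together with the sharp energy estimate \eqref{est:ene in BR0}: on one hand,
\begin{equation*}
(3\sigma-\e)R_0\leq \int_{B_{R_0}}\left(\tfrac12|\na u|^2+W(u)\right)dz\leq \sigma(|DA|+|DB|+|DC|)+CR_0^\alpha,
\end{equation*}
so that $|DA|+|DB|+|DC|\geq (3-\tfrac{\e}{\sigma})R_0-\tfrac{C}{\sigma}R_0^\alpha$. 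The point is that this forces the Steiner length of the approximate triod to be \emph{almost} $3R_0$, which is the length of the triod through the origin whose endpoints are the (nearly antipodally-and-$120^\circ$-spaced) points $A,B,C$ on $\pa B_{R_0}$. The second step is a purely Euclidean-geometry lemma: since $A,B,C$ lie on $\pa B_{R_0}$ and, by \eqref{meas est for transition arcs}/\eqref{est d1 d2} and Lemma \ref{lemma: on bdy, close to each phase}, the transitional arcs $I_{12},I_{13},I_{23}$ have length at most $C_2/\delta^2=o(R_0)$, the points $A,B,C$ are within $O(1/\delta^2)$ of being the vertices of an inscribed equilateral triangle; consequently the function $P\mapsto |PA|+|PB|+|PC|$ restricted to $\overline B_{R_0}$ is bounded above by $3R_0$ up to an $O(1/\delta^2)$ error \emph{only} near the origin, and is strictly smaller — by a definite amount proportional to $\dist(P,0)^2/R_0$ — once $\dist(P,0)$ is of order $\tau R_0$. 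Combining the two steps, $D\notin B_{\tau R_0}$ would give $|DA|+|DB|+|DC|\leq 3R_0 - c\tau^2 R_0 + C/\delta^2$ for a universal $c>0$, contradicting the lower bound $|DA|+|DB|+|DC|\geq 3R_0-\tfrac{\e}{\sigma}R_0-\tfrac{C}{\sigma}R_0^\alpha$ once $\e$ is chosen small (depending on $\tau$) and $R_0$ large. This proves $D\in B_{\tau R_0}$.

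Once $D\in B_{\tau R_0}$ with $\tau\leq \tfrac{1}{100}$, the last step is elementary trigonometry. The three points $A,B,C$ lie on $\pa B_{R_0}$ and, by the argument above (or directly from \eqref{est d1 d2}), the arcs they cut are each within $O(1/\delta^2)$ of $\tfrac{2\pi}{3}R_0$, hence the central angles $\angle AOB,\angle BOC,\angle AOC$ are each $\tfrac{2\pi}{3}+o(1)$. Since $D$ is within distance $\tau R_0$ of $O$ while $|OA|=|OB|=|OC|=R_0$, each angle $\angle ADB$ differs from the corresponding $\angle AOB$ by at most an amount controlled by $\arcsin(\tau)$ (indeed $D$, $O$ and any chord endpoint form a triangle with one side $\leq \tau R_0$ and another of length $R_0$), so every internal angle of triangle $ABC$ as seen from $D$ is strictly less than $\tfrac{2\pi}{3}$, and in fact each internal angle $\angle BAC$ etc. of the triangle itself is close to $\tfrac{\pi}{3}$; in particular all are $<\tfrac{2\pi}{3}$. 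By the classical characterization of the Fermat–Steiner point recalled just before the statement, $D$ therefore lies in the interior of $ABC$ and satisfies $\angle ADB=\angle BDC=\angle ADC=\tfrac{2\pi}{3}$.

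The main obstacle is the quantitative Euclidean estimate in the second step — namely showing that the Steiner function $P\mapsto |PA|+|PB|+|PC|$ on $\overline B_{R_0}$ decays quadratically, $3R_0 - |PA|-|PB|-|PC|\gtrsim \dist(P,0)^2/R_0$ (up to the $O(1/\delta^2)$ error coming from $A,B,C$ not being exactly an inscribed equilateral triangle), uniformly for $A,B,C$ in the allowed configuration. One clean way to get this is to first replace $A,B,C$ by the exact vertices $A_0,B_0,C_0$ of an inscribed equilateral triangle at the cost of an $O(1/\delta^2)$ error in the Steiner length (Lipschitz dependence of $|PA|+|PB|+|PC|$ on $A,B,C$), and then observe that for the symmetric configuration $\sum_j |P-A_{0,j}|$ is a smooth, rotationally-symmetric-at-$O$ function with a nondegenerate interior minimum value $3R_0$ at $P=0$ (its Hessian there is a positive multiple of the identity, by symmetry), giving the quadratic lower bound on any fixed ball $B_{\tau R_0}$ after rescaling $P=R_0 Q$. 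All the constants ($c$, the implied constants) are then independent of $\e$ and $R_0$, which is exactly what the statement demands; the smallness of $\e$ and the largeness of $R_0$ are used only at the very end to absorb the $\tfrac{\e}{\sigma}R_0$ and $CR_0^\alpha$ error terms against $c\tau^2 R_0$.
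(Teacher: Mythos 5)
Your overall skeleton is the same as the paper's: assume $D\notin B_{\tau R_0}$, play the energy upper bound $\sigma(|DA|+|DB|+|DC|)+CR_0^\alpha$ against the lower bound $(3\sigma-\e)R_0$ from \eqref{est:ene in BR0}, and derive a contradiction from a purely geometric estimate showing that the Steiner length drops below $(3-c\tau^2)R_0$ when the Fermat point is far from the origin. Step 1 is fine. The problem is your justification of the geometric step, which has two genuine gaps.

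First, the premise that $A,B,C$ are within $O(1/\delta^2)$ of an inscribed equilateral triangle is not available at this stage. Section \ref{sec: bdy data} only controls the \emph{transition} arcs $I_{12},I_{13},I_{23}$ (short and separated); it says nothing about the three phase arcs $I_1,I_2,I_3$ having comparable lengths. A priori one phase could occupy a tiny arc, making $ABC$ very far from equilateral — ruling out exactly this degeneracy is part of what Corollary \ref{corol: D is close to origin} accomplishes, so assuming near-equilaterality here is circular. (The subsequential convergence \eqref{seq conv} does not help either, since $R_0$ is chosen by Fubini, not along the subsequence, and the corollary must later be applied at every scale $R_i$.) Second, even granting that premise, your quantitative claim points the wrong way: you correctly identify that $f(P)=|PA|+|PB|+|PC|$ for the symmetric configuration has a nondegenerate \emph{minimum} value $3R_0$ at $P=0$, which yields $f(P)\geq 3R_0+c|P|^2/R_0-C/\delta^2$ — quadratic \emph{growth} away from the origin — yet your contradiction requires the upper bound $f(D)\leq 3R_0-c\tau^2R_0+C/\delta^2$. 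The two clauses of your second step are mutually inconsistent, and the Hessian-at-the-origin computation cannot produce the decay you need. The correct elementary fact (the paper's ``elementary calculation'') is obtained by expanding around $D$, not around $O$, and requires no equilateral structure: $f$ is convex with $f(O)=3R_0$, and at an interior Fermat point the three unit vectors $\f{A-D}{|A-D|}$, $\f{B-D}{|B-D|}$, $\f{C-D}{|C-D|}$ are at mutual $120$-degree angles, so $\nabla^2 f\geq \f{c}{R_0}\,\mathrm{Id}$ there; hence $3R_0=f(O)\geq f(D)+\f{c}{R_0}|OD|^2\geq f(D)+c\tau^2R_0$ (the case where $D$ is a vertex is handled directly, since then $B,C$ lie in a $120$-degree arc around $A$ and $f(A)=|AB|+|AC|\leq 2R_0$). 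Incidentally, had the near-equilateral premise actually been available, your own growth estimate $f(P)\geq 3R_0+c|P|^2/R_0-C/\delta^2$ together with $f(D)\leq f(O)=3R_0$ would prove $D\in B_{\tau R_0}$ directly, with no energy bounds at all — a sign that the argument is not engaging with what actually needs to be proved. The final assertion about the angles also does not need your trigonometric detour: once $D\in B_{\tau R_0}$ with $\tau<1$, $D$ cannot coincide with a vertex of $ABC$ (the vertices lie on $\pa B_{R_0}$), so by the Fermat--Steiner dichotomy recalled before the statement all angles of $ABC$ are less than $\f{2\pi}{3}$ and the $120$-degree property at $D$ follows.
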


\begin{rmk}
    We will explain the choice of $\tau$ in the proof of  Proposition \ref{prop: lower bound}.
\end{rmk}

\begin{proof}
    Assume by contradiction $D\not\in B_{\tau R_0}$, then an elementary calculation implies there exists a positive constant $\mu\sim O(\tau^2)$ such that 
    $$
    |DA|+|DB|+|DC|<(3-\mu) R_0.
    $$
    This estimate, together with \eqref{est: upper bound}. yields a contradiction with the lower bound in \eqref{est:ene in BR0} provided $R_0$ is sufficiently large. 
\end{proof}

Next we establish the lower bound for $E(u,B_{R_0})$. 

\begin{proposition}\label{prop: lower bound}
    There exists a constant $C(W)$ such that 
    \begin{equation}\label{est:lower bound}
         \int_{B_{R_0}} \left( \f12|\na u|^2+W(u) \right)\,dz\geq \sigma(|DA|+|DB|+|DC|)-CR_0^\f{2}{3}. 
    \end{equation}
\end{proposition}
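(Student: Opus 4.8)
The plan is to mimic the lower bound argument of \cite[Proposition 3.1]{alikakos2024triple}, adapted to the fact that the junction point $D$ is not necessarily the origin but lies in $B_{\tau R_0}$ by Corollary \ref{corol: D is close to origin}. The key idea is a slicing/coarea argument: foliate $B_{R_0}$ by a suitable one-parameter family of curves transverse to the three expected interface directions, and on each slice estimate the energy from below by comparison with the 1D action $\sigma$ using Lemma \ref{lemma: 1D energy estimate}, while keeping track of how many phase transitions each slice must carry.

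\medskip

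\textbf{Step 1: Setup and reduction to three sectors.} Using the point $D$ and the three boundary arcs $I_1,I_2,I_3$, decompose $B_{R_0}$ into three curvilinear ``sectors'' $S_{12}, S_{23}, S_{31}$, where $S_{ij}$ is the region swept out between the interface rays $DA$ and $DB$ (etc.), bounded on the outside by the portion of $\pa B_{R_0}$ containing the transition arc $I_{ij}$. It suffices to show
\begin{equation*}
    \int_{S_{ij}} \left( \f12|\na u|^2+W(u) \right)\,dz \geq \sigma\, \ell_{ij} - C R_0^{2/3},
\end{equation*}
where $\ell_{ij}$ is the length of the relevant segment (e.g. $|DA|$ appears in the two sectors adjacent to it, each contributing half, or one assigns each segment to one sector — the bookkeeping is arranged so that summing gives $\sigma(|DA|+|DB|+|DC|)$). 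Since $D\in B_{\tau R_0}$ and the angles at $D$ are all $2\pi/3$ (Corollary \ref{corol: D is close to origin}), each sector is a genuine ``lens''-shaped region whose geometry is uniformly controlled.

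\medskip

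\textbf{Step 2: Slicing each sector.} Within $S_{ij}$, introduce coordinates adapted to the segment separating the two phases $a_i$ and $a_j$: parametrize $S_{ij}$ by arclength $t$ along (a perturbation of) the bisector direction and let $\ell \mapsto u$ restricted to the transverse slice $\Sigma_t$. For $\mathcal{H}^1$-a.e. $t$, the slice $\Sigma_t$ is a curve whose endpoints lie in $I_i$ and $I_j$ respectively, hence by Lemma \ref{lemma: on bdy, close to each phase} the values of $u$ at the two endpoints satisfy $|u-a_i|<C\delta$ and $|u-a_j|<C\delta$ at the outer end, while at the inner end (near $D$) one only controls a small ball of radius $O(R_0^{2/3})$ — this is where the error term originates. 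Apply the 1D lower bound of Lemma \ref{lemma: 1D energy estimate} (after reducing $C\delta<\delta_W$) to get that the energy density integrated over $\Sigma_t$ is at least $\sigma - C_W\delta^2$. Integrating in $t$ over a range of length $\sim \ell_{ij}$ and using the coarea-type inequality $\tfrac12|\na u|^2 \geq \tfrac12|\pa_T u|^2$ (tangential to the slice), one recovers $\sigma \ell_{ij}$ up to: (a) the $C_W\delta^2 \cdot \ell_{ij}$ error from the 1D estimate, which is $\leq C\delta^2 R_0$ — this must be absorbed by choosing $\e, \delta$ appropriately, or more carefully, one localizes $\delta$-transitions only on slices actually crossing the interface; and (b) the contribution of the exceptional core region $B_{O(R_0^{2/3})}(D)$, where slices are short or fail the endpoint condition, contributing at most $M^2 \cdot \mathcal{H}^2(B_{CR_0^{2/3}}) = C R_0^{4/3}$ — too large, so one must instead discard only an $O(R_0^{2/3})$-\emph{length} worth of slices, giving the stated $CR_0^{2/3}$ loss.

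\medskip

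\textbf{Step 3: Assembling.} Sum the three sector estimates; the segment lengths combine to $|DA|+|DB|+|DC|$, and the errors combine to $CR_0^{2/3}$ after the smallness of $\e$ (hence of $\delta$, hence of $\e\ll\delta^2$... ) is used to kill the $\delta^2 R_0$ term against the freedom in the $O(R_0^{2/3})$ budget — more precisely, following \cite{alikakos2024triple}, one chooses the slicing foliation so that only slices within distance $O(R_0^{2/3})$ of $\pa B_{R_0}$ or of $D$ are ``bad'', and the energy lower bound $\sigma$ per unit length holds on all good slices exactly (not $\sigma-C\delta^2$) because the endpoints are genuinely within $\delta$ of distinct phases there. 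This yields \eqref{est:lower bound}.

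\medskip

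\textbf{Main obstacle.} The delicate point is the treatment of the region near the junction $D$: there the three interfaces meet, slices degenerate, and one cannot simply apply the 1D estimate. One must show the ``missing'' energy there is only $O(R_0^{2/3})$ and not larger. This is handled by choosing the exceptional set to be a region of \emph{bounded diameter times} $R_0^{1/3}$... actually of diameter $\sim R_0^{2/3}$ chosen to match the length scale on which the boundary transition arcs $I_{ij}$ (of length $\leq C_2/\delta^2 = O(1)$) get ``spread'' — and crucially using that the choice $h=R_0^{1/3}$ and the exponential decay already exploited in Lemma \ref{lemma: Yi nonempty} and Proposition \ref{prop: upper bound} make all interpolation/transition layers of total area $O(R_0^{2/3})$. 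A secondary technical nuisance is making the foliation of each curvilinear sector explicit and checking that the transverse slices indeed connect $I_i$ to $I_j$ without re-entering the sector — this uses $D\in B_{\tau R_0}$ with $\tau$ small and the $2\pi/3$ angle condition, which is precisely why $\tau$ was chosen as in Corollary \ref{corol: D is close to origin}.
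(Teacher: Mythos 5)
Your overall strategy---decompose $B_{R_0}$ around the triod into three sectors and slice each transversally to its segment, applying Lemma \ref{lemma: 1D energy estimate} slice by slice---is genuinely different from the paper's proof, and as written it has a gap: double counting. If each ``sector'' $S_{ij}$ is the union of chords transverse to its segment, so that every slice has both endpoints on $\pa B_{R_0}$ where $u$ is controlled by Lemma \ref{lemma: on bdy, close to each phase}, then the three regions are half-disk-like sets whose pairwise intersections are wedges of opening $\pi/6$ at $D$, of area $\sim R_0^2$; summing the three slice estimates counts the full energy density in those wedges more than once, so the sum does not bound $E(u,B_{R_0})$ from below. (Splitting the density among three non-orthogonal directions does not work either: $\sum_k|\pa_{e_k}u|^2=\f32|\na u|^2$ for three unit directions at $2\pi/3$, and $W$ would also have to be split three ways.) If instead you force the sectors to be disjoint, then the slices near $D$ terminate in the interior of $B_{R_0}$, where no information on $u$ is available at this stage --- the localization of the diffuse interface near $T_{ABC}$ (Proposition \ref{prop: loc of interface}) is proved \emph{after}, and \emph{using}, this lower bound, so assuming control of $u$ at the inner endpoints is circular. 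Discarding an $O(R_0^{2/3})$ length of slices near $D$ repairs neither problem.

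The paper's proof resolves exactly this difficulty by using only \emph{two} slicing directions. After extending $u$ to $B_{R_0+1}$ with clean boundary data, it introduces a level $y^*$ determined by the solution itself (not by the triod), bounds the energy in $\Om_1=\{y\ge y^*\}$ by horizontal slicing (one transition per slice), and bounds the energy in $\Om_2=\{y<y^*\}$ by splitting $W=\sin^2\t\,W+\cos^2\t\,W$ between vertical slices (one transition each, capturing the horizontal extent $x_C'-x_B'$) and horizontal slices (two transitions each, capturing the vertical extent $2y^*-y_B'-y_C'$); optimizing over $\t$ gives $\s\sqrt{(x_C'-x_B')^2+(2y^*-y_B'-y_C')^2}$, which equals $\s(|DB'|+|DC'|)$ precisely when $y^*=y_D$ because of the $2\pi/3$ angles. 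A closing minimization over $y^*$ (Appendix \ref{minimization y*}) and a separate case $y^*\ge y_A'-d_2$ then yield the Steiner length. This mechanism --- the $\t$-splitting of the potential, the solution-dependent level $y^*$, and the final one-parameter minimization --- is the heart of the argument and is absent from your proposal; without it, or an equivalent device, the three-sector slicing cannot be assembled into a valid lower bound.
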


\begin{proof}
 Up to a possible rotation, we assume $\overrightarrow{DA}$ is parallel to the positive $y$--axis, $\overrightarrow{DB}$ is parallel to the vector $(-\sqrt{3},-1)$ and $\overrightarrow{DC}$ is parallel to the vector $(\sqrt{3},-1)$, as shown in Figure \ref{fig lower bound}. We set
    \begin{equation*}
        A=(x_A,y_A),\ B=(x_B,y_B),\ C=(x_C,y_C),\ D=(x_D,y_D). 
    \end{equation*}
    By Corollary \eqref{corol: D is close to origin},
    \begin{equation*}
        |OD|=\sqrt{x_D^2+y_D^2}< \tau R_0, \quad \text{for }\tau=\min\{\frac{1}{100}, \frac{C_1}{20}\}.
    \end{equation*}
    Without loss of generality we assume that 
    \begin{equation*}
        x_D=x_A\geq 0, 
    \end{equation*}
    and therefore by simple geometry it holds that 
    \begin{equation*}
        y_C\geq y_B.
    \end{equation*}

    Now we define an extension $\tilde u$ of $u|_{B_{R_0}}$ to a larger ball $B_{R_0+1}$, which satisfies a simpler boundary condition on $\pa B_{R_0+1}$. We set $\tilde{I}_i$, $\tilde{I}_{ij}$ as the image of arcs $I_{i}$, $I_{ij}$ under the homothetic transformation: $z\ri \f{R_0+1}{R_0}z$, i.e. 
    \beqo
    \begin{split}
    \tilde{I}_i&:=\{z\in \pa B_{R_0+1}:\ \f{R_0}{R_0+1}z\in I_i\},\quad \forall i\in\{1,2,3\},\\
    \tilde{I}_{ij}&:=\{z\in \pa B_{R_0+1}:\ \f{R_0}{R_0+1}z\in I_{ij}\},\quad \forall i\neq j\in\{1,2,3\}.
    \end{split}
    \eeqo
   The boundary value for $\tilde{u}$ on $\pa B_{R_0+1}$ is given by
    \begin{equation}\label{bdy value for lower bdd}
        \tilde{u}(z)=\begin{cases}
            a_i, & z\in \tilde{I}_i,\\
            \text{smooth connection of }a_i,\, a_j, & z\in \tilde{I}_{ij}.
        \end{cases}
    \end{equation}
We define $\tilde{u}$ in the annulus $A_{R_0+1,R_0}$ by linearly interpolating between $\tilde{u}|_{\pa B_{R_0+1}}$ and $u|_{\pa B_{R_0}}$, 
\begin{equation*}
    \tilde{u}(r,\theta)= (r-R_0)\tilde{u}(R_0+1,\theta)+(R_0+1-r)u(R_0,\theta),\ \ r\in (R_0,R_0+1).
\end{equation*}
Following the same computation as in the previous estimate of energy for the two phase boundary data (see Case 2 in the proof of Lemma \ref{lemma: Yi nonempty}), we can obtain the following upper bound for the energy in $A_{R_0+1,R_0}$:
\begin{equation*}
    \int_{A_{R_0+1,R_0}}\left( \f12|\na \tilde{u}|^2+W(\tu) \right)\,dz \leq C(\delta, W).
\end{equation*}
Consequently, in order to get \eqref{est:lower bound}, it suffices to prove
\begin{equation}\label{est: lower bdd on ext}
    \int_{B_{R_0+1}} \left( \f12|\na \tilde{u}|^2+W(\tu) \right)\,dz\geq \sigma(|DA|+|DB|+|DC|)-CR_0^\f{2}{3},
\end{equation}
with the boundary value \eqref{bdy value for lower bdd}. It follows from Corollary \ref{corol: D is close to origin} that $\tilde{u}$ equals to $a_1$, $a_2$, $a_3$ respectively on three arcs of $\pa B_{R_0+1}$ with the arc angle close to $\f{2\pi}{3}$. Moreover, these arcs are separated by small arcs of size $\f{C}{\delta^2}$ due to the estimate \eqref{meas est for transition arcs}.

Next we prove the lower bound \eqref{est: lower bdd on ext}. The argument essentially follows the proof of \cite[Proposition 3.1]{alikakos2024triple}, with necessary adjustments tailored to accommodate the scenario where $D$ is not the origin. Firstly, we extend $DA$ to intersect with $\pa B_{R_0+1}$ at $A'$. We claim that $A'\in \tilde{I}_{12}$. Taking $\tilde{A}$ to be the middle point of $\tilde{I}_{12}$ ($\tilde{A}$ is the intersection of the ray $\overrightarrow{OA}$ and $\pa B_{R_0+1}$). As illustrated in Figure \ref{fig A' in I12}, it follows from Corollary \ref{corol: D is close to origin} and \eqref{est d1 d2} that 
$$
|A'\tilde{A}| \leq |OD|\cdot \frac{2}{R_0}\leq 2\tau\leq \f{C_1}{10}\leq \f{3d_1}{10}. 
$$
which implies that $A'\in \tilde{I}_{12}$ since $\ch(\tilde{I}_{12})\geq d_1$. Similarly, we extend $DB$, $DC$ to intersect with $\pa B_{R_0+1}$ at $B'\in \tilde{I}_{13}$ and $C'\in \tilde{I}_{23}$ respectively. 

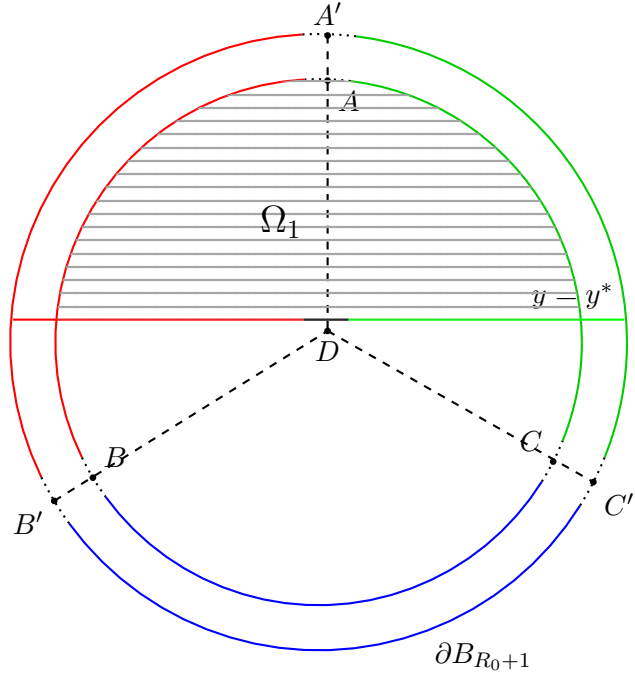
\begin{figure}[ht]
\centering
\begin{tikzpicture}[thick]
\draw [black!20!green,domain=0:83] plot ({3.5*cos(\x)}, {3.5*sin(\x)});
\draw [black!20!green,domain=338:360] plot ({3.5*cos(\x)}, {3.5*sin(\x)});
\draw [dotted,domain=83:93] plot ({3.5*cos(\x)}, {3.5*sin(\x)});
\draw [red,domain=93:206] plot ({3.5*cos(\x)}, {3.5*sin(\x)});
\draw [dotted,domain=206:216] plot ({3.5*cos(\x)}, {3.5*sin(\x)});
\draw [blue,domain=216:328] plot ({3.5*cos(\x)}, {3.5*sin(\x)});
\draw [dotted,domain=328:338] plot ({3.5*cos(\x)}, {3.5*sin(\x)});

\draw [black!20!green,domain=0:83] plot ({4.1*cos(\x)}, {4.1*sin(\x)});
\draw [black!20!green,domain=338:360] plot ({4.1*cos(\x)}, {4.1*sin(\x)});
\draw [dotted,domain=83:93] plot ({4.1*cos(\x)}, {4.1*sin(\x)});
\draw [red,domain=93:206] plot ({4.1*cos(\x)}, {4.1*sin(\x)});
\draw [dotted,domain=206:216] plot ({4.1*cos(\x)}, {4.1*sin(\x)});
\draw [blue,domain=216:328] plot ({4.1*cos(\x)}, {4.1*sin(\x)});
\draw [dotted,domain=328:338] plot ({4.1*cos(\x)}, {4.1*sin(\x)});

\draw ({4.1*cos(290)}, {4.1*sin(290)}) node [below right]{$\pa B_{R_0+1}$};

\draw [dashed] ({4.1*cos(211)},{4.1*sin(211)})--(0.12,0.15);
\draw[dashed] (0.12,0.15)--({4.1*cos(333)},{4.1*sin(333)});
\draw [dashed] (0.12,4.08)--(0.12,0.15);
\filldraw (0.12,0.15) circle (0.03cm) node[below]{$D$};
\filldraw (0.12,3.48) circle (0.03cm) node[below right]{$A$};
\filldraw ({3.5*cos(211)},{3.5*sin(211)}) circle (0.03cm) node[above right]{$B$};
\filldraw ({3.5*cos(333)},{3.5*sin(333)}) circle (0.03cm) node[above left]{$C$};

\filldraw (0.12,4.08) circle (0.03cm) node[above]{$A'$};
\filldraw ({4.1*cos(211)},{4.1*sin(211)}) circle (0.03cm) node[below left]{$B'$};
\filldraw ({4.1*cos(333)},{4.1*sin(333)}) circle (0.03cm) node[below right]{$C'$};

\draw [red] (-4.06,0.3)--(-0.2,0.3);
\draw (-0.2,0.3)--(0.4,0.3);
\draw [green] (0.4,0.3)--(4.06,0.3) node[above left,color=black] {$y=y^*$};

\path ({-3.5*cos(5)},{3.5*sin(5)}) coordinate (M);
\path ({3.5*cos(5)},{3.5*sin(5)}) coordinate (N);
\fill[pattern={Lines[angle=0,distance=5pt]}, pattern color=gray!70] (M)--(N) arc[start angle=5, end angle=175, radius=3.5] node at (-0.5,1.6) {\Large{$\Om_1$}};
 
\end{tikzpicture}
\caption{Triod $T_{ABC}$ with the center $D$ close to origin, and its extension to $T_{A'B'C'}\in B_{R_0+1}$}.
\label{fig lower bound}
\end{figure}

\begin{figure}[ht]
\centering
\begin{tikzpicture}[thick]
\draw [black!20!green,domain=20:74] plot ({4.2*cos(\x)}, {4.2*sin(\x)});
\draw [red,domain=90:160] plot ({4.2*cos(\x)}, {4.2*sin(\x)});
\draw [dotted,domain=74:90] plot ({4.2*cos(\x)}, {4.2*sin(\x)});

\draw [black!20!green,domain=20:74] plot ({5.5*cos(\x)}, {5.5*sin(\x)});
\draw [red,domain=90:160] plot ({5.5*cos(\x)}, {5.5*sin(\x)});
\draw [dotted,domain=74:90] plot ({5.5*cos(\x)}, {5.5*sin(\x)});

\filldraw (0,0) circle (0.05cm) node[below]{$O$};
\filldraw (0.6,0.2) circle (0.05cm) node[below]{$D$};
\filldraw ({4.2*cos(82)},{4.2*sin(82)}) circle (0.05cm) node[below right]{$A$};
\filldraw ({5.5*cos(82)},{5.5*sin(82)}) circle (0.05cm) node[below right]{$\tilde{A}$};

\draw[dashed](0,0)--({5.5*cos(82)},{5.5*sin(82)});
\draw[dashed](0.6,0.2)--({4.2*cos(82)},{4.2*sin(82)})--(0.6,5.46);
\filldraw (0.6,5.46) circle (0.05cm) node[below left]{$A'$};

\end{tikzpicture}
\caption{$A'$ locates within $\tilde{I}_{12}$.}
\label{fig A' in I12}
\end{figure}
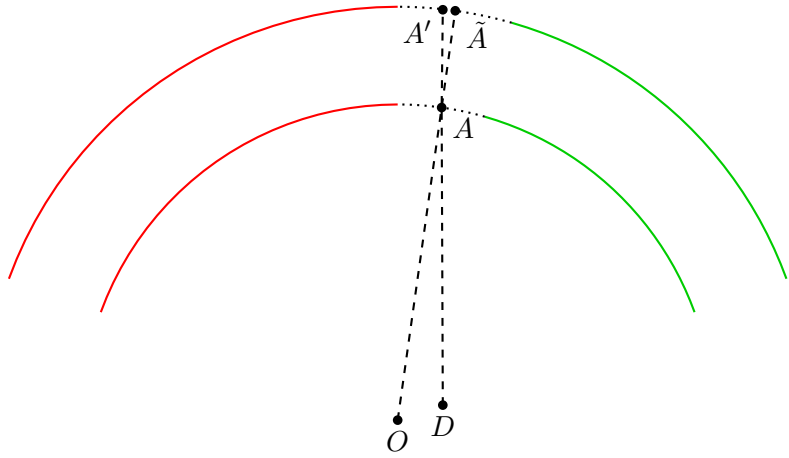

The coordinates for $A'$, $B'$, $C'$ are denoted as
\begin{equation*}
        A'=(x_{A}',y_A'),\quad B'=(x_{B}',y_{B}'),\quad C'=(x_C',y_C'),
\end{equation*}
and satisfy
\begin{equation*}
\begin{split}
   &x_A'=x_D=x_A,\quad y_C'\geq y_B',\\
   &\frac{y_B'-y_D}{x_B'-x_D}=\f{1}{\sqrt{3}},\quad \frac{y_C'-y_D}{x_C'-x_D}=-\f{1}{\sqrt{3}}.
\end{split}
\end{equation*}

For any $y\in(y_C',R_0+1)$, we define the horizontal line segment $\g_y$ and functions $\lam_1(y)$, $\lam_2(y)$ and $\lam_3(y)$ as follows. 
\begin{align*}
    \g_y&:=\{(x,y):\; x\in\mathbb{R}\}\cap B_{R_0+1},\\
    \lam_i(y)&:=\mathcal{H}^1(\g_y\cap \{\vert \tilde{u}(x,y)-a_i\vert \leq R_0^{-\f16}\}),\quad i\in \{1,2,3\}.
\end{align*}

By the boundary condition on $\pa B_{R_0+1}$ we know for any $y\in [y_C'+d_2, y_A'-d_2]$, it holds that $\lam_1(y)>0$ and $\lam_2(y)>0$. 

We set
\begin{align}
\label{def: y*} y^*&:=\min\{y\in [y_C'+d_2,R_0+1]:\lam_1(y)+\lam_2(y)\geq \ch(\g_y)-R_0^{\f23}\},\\
\nonumber \zeta(x)&:=\min\{y^*,\sqrt{(R_0+1)^2-x^2}\},\\
\nonumber K&:=\{x\in[x_B'+d_2, x_C'-d_2]:\  \vert \tu(x,\zeta(x))-a_i\vert <R_0^{-\f16},\ i=1 \text{ or } 2\},\\
\nonumber L&:=\{y\in [y_C'+d_2,y^*]: \lam_3(y)> 0\},\\
\nonumber \Om_1&:=\{z=(x,y)\in B_{R_0+1}: \ y\geq y^*\},\\
\nonumber \Om_2&:=\{z=(x,y)\in B_{R_0+1}: \ y< y^*\}.
\end{align}

Note that when $(R_0+1)^2-y^2<\f{R_0^{\f43}}{4}$, $\mathcal{H}^1(\g_y)-R_0^{\f23}<0$, and therefore the set on which we minimize to get $y^*$ is not empty. We will examine two scenarios based on the value of $y^*$. Additionally, we can bound the measure of $K$ from below by the definition of $y^*$ and the boundary constraint $\tu|_{\pa B_{R_0+1}}$:
\begin{equation}\label{est: meas of K}
    \ch(K)\geq x_C'-x_B'-2d_2-R_0^{\f23}.
\end{equation}

\begin{case}  $y*< y_A'-d_2$.

   For any $y\in[y^*,y_A'-d_2)$, the horizontal line $\g_y$ intersects $\pa B_{R_0+1}$ at two points, where $\tilde{u}$ takes the value $a_1$, $a_2$ respectively. The one dimensional energy estimate Lemma \ref{lemma: 1D energy estimate} yields
   \begin{equation*}
       \int_{\g_y} \left( \f12|\pa_x \tu|^2+W(\tu) \right)\,dx\geq \sigma.
   \end{equation*}
   Integrating along the vertical direction implies
   \begin{equation}\label{est: ene in Omega_1}
       \int_{\Om_1} \left( \f12|\pa_x \tu|^2+W(\tu) \right)\,dz\geq \sigma(y_A'-y^*-d_2)\geq \sigma(y_A'-y^*)-C(\delta).
   \end{equation}

On the domain $\Om_2$, we claim that there exists a constant $C$ such that 
\begin{equation*}
    \mathcal{H}^1([y_C'+d_2,y^*]\setminus L)<CR_0^{\f23}.
\end{equation*}

Indeed, we set 
\beqo
S:=\{y\in[y_C'+d_2,y^*]: \lam_3(y)=0\}=[y_C'+d_2,y^*]\setminus L.
\eeqo
For $y\in S$, the definitions of $y^*$ and $S$ imply that $\lam_1(y)+\lam_2(y)+\lam_3(y)<\ch(\g_y)-R_0^{\f23}$, or equivalently
\begin{equation*}
    \ch(\{x \in [-\sqrt{(R-_0+1)^2-y^2},\sqrt{(R_0+1)^2-y^2}]: \vert \tu(x,y)-a_i\vert >R_0^{-\f16},\ \forall i\})>R_0^{\f23}.
\end{equation*}
From the energy upper bound \eqref{est: upper bound} and Lemma \ref{lemma: potential energy estimate} we have that
\beqo
\begin{split}
    &4R_0\geq \int_S\int_{\g_y} W(\tu)\ dxdy\geq  \ch(S)\cdot(\f12c_W R_0^{-\f13})\cdot R_0^{\f23}\\
    &\Rightarrow \ch(S)\leq CR_0^{\f23},
\end{split}
\eeqo
for some constant $C$ depending on $W$. The claim is established.

Next we want to estimate the energy in $\Om_2$ in both vertical and horizontal directions. We split the potential $W(\tilde{u})$ as 
\begin{equation*}
    W(\tu)=\sin^2\t \ W(\tu)+\cos^2\t\  W(\tu),
\end{equation*}
for some $\t\in[0,\frac{\pi}{2}]$ to be determined. 

For $x\in K$, $A(x)=(x,\zeta(x))$ and $B(x)=(x,-\sqrt{(R_0+1)^2-x^2})$ are the two intersection points of the vertical line $\{(x,y):y\in\BR\}$ with $\pa\Om_2$. We have $\tu(B(x))=a_3$ by the boundary condition and $|\tu(A(x))-a_i|\leq R_0^{-\f16}$ for $i=1$ or $2$ by the definition of $K$. Applying Lemma \ref{lemma: 1D energy estimate} and then integrating with respect to $x$ yields
\begin{equation}\label{ene: om2 vertical}
\begin{split}
    &\int_{\Om_2}\left( \f{1}2|\pa_y \tu|^2+\sin^2\theta\  W(\tu) \right)\,dz\\
    \geq & \sin\t\int_{x\in K} \int_{-\sqrt{(R_0+1)^2-x^2}}^{\zeta(x)}\left(\f{1}{2\sin\t}|\pa_y \tu|^2+\sin\theta\,W(\tu) \right)\,dydx\\
    \geq & \sin\t(x_C'-x_B'-2d_2-R_0^{\f23}) (\sigma-CR_0^{-\f13})\\
    \geq & \sin\t\left[(x_C'-x_B')\sigma-CR_0^\f{2}{3}\right].  
\end{split}
\end{equation}
Here we utilize $x_C'-x_B'\sim \sqrt{3}R_0$ and $d_2\leq \f{C}{\delta^2}\ll R_0^{\f23}$ to derive the last inequality.

By definition, for any $y\in L$, 
\begin{align*}
    &\tu(-\sqrt{(R_0+1)^2-y^2},y)=a_1,\quad \tu(\sqrt{(R_0+1)^2-y^2},y)=a_2,\\
    \exists &\, (x_0,y)\in \g_y,\ \text{ such that }|\tu(x_0,y)-a_3|\leq R_0^{-\f16},
\end{align*}
which implies that there are approximately two phase transitions along $\g_y$. Thus we can estimate 
\begin{equation}\label{ene: om2 horizontal 1}
    \begin{split}
     &\int_{\Om_2\cap\{y\in L\}} \left( \f12\vert\pa_{x}\tu\vert^2+\cos^2\theta\,W(\tu) \right)\,dz\\
     \geq & \cos\t\,\int_{y\in L} \int_{\g_y}  \left(  \f1{2\cos\t}\vert\pa_{x}\tu\vert^2+\cos\theta\,W(\tu) \right)\,dxdy\\
    =& \cos\theta\,\int_{y\in L} \left\{ \int_{-\sqrt{(R_0+1)^2-y^2}}^{x_0} +\int_{x_0}^{\sqrt{(R_0+1)^2-y^2}}\right\} \left(\f{1}{2\cos\theta_2}\vert\pa_{x}\tu\vert^2+ \cos\theta\,W(u)\right)\,dx\\
    \geq & \cos\theta\cdot(y^*-y_C'-d_2-CR_0^{\f23})\cdot(2\sigma-CR_0^{-\f13})\\
    \geq & 2\cos\t (y^*-y_C')\sigma- CR_0^{\f23},
     \end{split}
\end{equation}
where the last inequality follows from $y^*-Y_C'<2R_0$ and $d_2\ll R_0^{\f23}$. 

In case $y_B'+d_2\geq y_C'-d_2$, we directly proceed with the above estimate \eqref{ene: om2 horizontal 1} to derive \eqref{ene: om2 horizontal}. Otherwise, if $y_B'+d_2 < y_C'-d_2$, for any $y\in (y_B'+d_2,y_C'-d_2)$, $\g_y$ will intersect with $\pa B_{R_0+1}$ at two points where $\tu$ equals to $a_1,\,a_3$ respectively. We have that 
\begin{equation}\label{ene: om2 horizontal 2}
    \begin{split}
     &\int_{\Om_2\setminus\{y\in L\}} \left( \f12\vert\pa_{x}\tu\vert^2+\cos^2\theta\,W(\tu) \right)\,dxdy\\
     \geq & \cos\t\,\int_{y_B'+d_2}^{y_C'-d_2} \int_{\g_y}  \left(  \f1{2\cos\t}\vert\pa_{x}\tu\vert^2+\cos\theta\,W(\tu) \right)\,dxdy\\
    =& \cos\theta\,(y_C'-y_B'-2d_2)\sigma\\
    \geq & \cos\t\,(y_C'-y_B')\sigma- C.
     \end{split}
\end{equation}

Adding \eqref{ene: om2 horizontal 1} and \eqref{ene: om2 horizontal 2} gives
\begin{equation}\label{ene: om2 horizontal} 
    \int_{\Om_2} \left( \f12\vert\pa_{x}\tu\vert^2+\cos^2\theta\,W(\tu) \right)\,dz\geq \cos\t\,(2y^*-y_B'-y_C')-CR_0^{\f23}.
\end{equation}
This, together with \eqref{ene: om2 vertical}, implies that 
\begin{equation*}
    \int_{\Om_2}\left( \f12|\na \tu|^2+W(\tu) \right)\,dz
    \geq  \left[\sin\t\,(x_C'-x_B')+\cos\t(2y^*-y_B'-y_C')\right]\sigma-CR_0^{\f23},
\end{equation*}
holds for any $\theta\in[0,\f{\pi}{2}]$. Taking $\t=\arctan{\f{x_C'-x_B'}{2y^*-y_B'-y_C'}}$ to maximize the right-hand side, we obtain
\begin{equation}\label{ene: om2}
    \int_{\Om_2} \left( \f12|\na \tu|^2+W(\tu) \right)\,dz
    \geq \sigma\cdot \sqrt{(x_C'-x_B')^2+(2y^*-y_B'-y_C')^2} -CR_0^{\f23}.
\end{equation}

Combining \eqref{est: ene in Omega_1} and \eqref{ene: om2} gives that 

\begin{equation}\label{lower bound by y*}
    \begin{split}
    \int_{B_{R_0+1}}& \left( \f12|\na \tu|^2+W(\tu) \right)\,dz\\
    \geq & \sigma\left[ (y_A'-y^*)+ \sqrt{(x_C'-x_B')^2+(2y^*-y_B'-y_C')^2}\right] -CR_0^{\f23}.
    \end{split}
\end{equation}

We are left with solving the following minimization problem
\begin{equation}\label{minimization problem: y*}
    \begin{split}
        \min & \quad  (y_A'-y^*)+ \sqrt{(x_C'-x_B')^2+(2y^*-y_B'-y_C')^2},\\
        \text{subject to} &\quad  y^*\in(y_C'+d_2,y_A'-d_2).
    \end{split}
\end{equation}
Direct calculation shows that 
\begin{equation*}
    \min  \ \left\{(y_A'-y^*)+ \sqrt{(x_C'-x_B')^2+(2y^*-y_B'-y_C')^2}\right\}=|DA'|+|DB'|+|DC'|,
\end{equation*}
and the minimum is reached at 
\begin{equation*}
    y^*=y_D.
\end{equation*}

The calculation is elementary and provided in Appendix \ref{minimization y*}. Therefore \eqref{est:lower bound} holds for the case $y^*<y_A'-d_2$.

\end{case}

\begin{case} $y^*\geq y_A'-d_2$. We show that in this case the energy is strictly larger than $\sigma(|DA'|+|DB'|+|DC'|)$. Split
\begin{equation*}
W(\tu)=\sin^2\t\,W(\tu)+\cos^2\t\,W(\tu),\text{ for some }\t\in(0,\f{\pi}{2}).
\end{equation*}

By the boundary data we have
\begin{align*}
    &\tu(x,\sqrt{(R_0+1)^2-x^2})=a_1,\,\tu(x,-\sqrt{(R_0+1)^2-x^2})=a_3,\ \forall x\in [x_{B'}+d_2,x_A'-d_2],\\
    &\tu(x,\sqrt{(R_0+1)^2-x^2})=a_2,\,\tu(x,-\sqrt{(R_0+1)^2-x^2})=a_3,\ \forall x\in [x_A'+d_2,x_C'-d_2],
\end{align*}
which allows us to estimate the energy in the vertical direction,
\begin{equation}\label{ene:case2,vertical}
    \begin{split}
&\int_{B_{R_0+1}}\left(\f1{2}\vert\pa_y \tu\vert^2+\sin^2\theta\,W(\tu)\right)\,dz\\
        \geq & \int_{x_{B'}+d_2}^{x_A'-d_2} \sigma \sin\theta\,dx+\int_{x_{A'}+d_2}^{x_C'-d_2} \sigma\sin\theta\,dx\\
        \geq & \s\sin\theta\,(x_C'-x_B')-C.  
    \end{split}
\end{equation}

Applying the same argument as in the claim of Case 1, we have that most of $y\in [y_C'+d_2,y_A'-d_2]$ belong to $L$, more precisely,
\begin{equation*}
    \ch([y_C'+d_2,y_A'-d_2]\setminus L)<CR_0^{\f23}.
\end{equation*}

For $y\in L\cap[y_C'+d_2,y_A'-d_2]$, by definition there are two phase transitions: from $a_1$ to $a_3$ and from $a_3$ to $a_2$. We perform a similar computation as in \eqref{ene: om2 horizontal 1} to obtain
\begin{equation}\label{ene: case 2 horizontal}
    \begin{split}
     &\int_{B_{R_0+1}\cap\{y\in L\}} \left( \f12\vert\pa_{x}\tu\vert^2+\cos^2\theta\,W(\tu) \right)\,dxdy\\
     \geq & \cos\t\,\int_{y\in L\cap [y_C'+d_2,y_A'-d_2]} \int_{\g_y}  \left(  \f1{2\cos\t}\vert\pa_{x}\tu\vert^2+\cos\theta\,W(\tu) \right)\,dxdy\\
    \geq & 2\cos\t (y_A'-y_C')\sigma- CR_0^{\f23}.
     \end{split}
\end{equation}

Adding \eqref{ene:case2,vertical} and \eqref{ene: case 2 horizontal} and maximizing with respect to $\t$ yield
\begin{equation}\label{ene: case2}
\begin{split}
    \int_{B_{R_0+1}} \left( \f12|\na \tu|^2+W(\tu) \right)\,dz &\geq \sigma\,\sqrt{(x_C'-x_B')^2+4(y_A'-y_C')^2}-CR_0^{\f23}\\
    &\geq \f{49}{50} 2\sqrt{3} R_0 \sigma. 
\end{split}
\end{equation}
where the last line follows from Corollary \ref{corol: D is close to origin} which implies that $x_C'-x_B'\geq (1-2\tau)\sqrt{3}R_0$ and $y_A'-y_C'\geq  (1-2\tau)\f{3R_0}{2}$. Note that 
\beqo
|DA'|+|DB'|+|DC'|\leq 3R_0+3,
\eeqo
which together with \eqref{ene: case2} leads to
\beqo
 \int_{B_{R_0+1}} \left( \f12|\na \tu|^2+W(\tu) \right)\,dz >\sigma(|DA'|+|DB'|+|DC'|).
\eeqo
This completes the proof of Proposition \ref{prop: lower bound}.
\end{case}

\end{proof}

At the end of this section, we will derive that $y^*$ is not far from $y_D$. Set 
\begin{equation}\label{def of beta}
    \beta:=1- \min\left\{\f16,\f{1-\al}{2}\right\}\in (0,1),
\end{equation}
where $\al\in(0,1)$ is the constant in Proposition \ref{prop: upper bound}. Then we have the following lemma:
\begin{lemma}\label{lemma: est of y*}
   There is a constant $C$ depending on $W$, such that 
   \beqo
   |y^*-y_D|\leq CR_0^{\beta},
   \eeqo
   where $y^*$ is defined in \eqref{def: y*}.
\end{lemma}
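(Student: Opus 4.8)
The plan is to compare the energy upper bound from Proposition \ref{prop: upper bound} with the lower bound machinery developed in the proof of Proposition \ref{prop: lower bound}, but now keeping track of the \emph{gap} between the optimal choice $y^* = y_D$ and the actual value of $y^*$. In Case 1 of that proof we obtained the lower bound \eqref{lower bound by y*}, namely
\beqo
\int_{B_{R_0+1}}\left(\f12|\na\tu|^2+W(\tu)\right)\,dz \geq \sigma\,g(y^*) - CR_0^{2/3},
\eeqo
where $g(y):=(y_A'-y)+\sqrt{(x_C'-x_B')^2+(2y-y_B'-y_C')^2}$, and we showed $g$ attains its minimum $|DA'|+|DB'|+|DC'|$ at $y=y_D$. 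On the other hand, reversing the extension step (adding back the $C(\delta,W)$ energy of the annulus $A_{R_0+1,R_0}$ and recalling $|DA'|+|DB'|+|DC'| \le |DA|+|DB|+|DC| + 3$, with $|A'A|,|B'B|,|C'C|$ all $O(1)$), the upper bound \eqref{est: upper bound} gives
\beqo
\sigma\, g(y^*) \leq \sigma\,g(y_D) + CR_0^{\max\{2/3,\ \al\}} \leq \sigma\, g(y_D) + C R_0^{2\beta-1}.
\eeqo
Thus $g(y^*)-g(y_D) \leq CR_0^{2\beta-1}$.

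The remaining step is purely the quantitative convexity of the scalar function $g$ near its minimum. Since $g$ is smooth and strictly convex on its domain with $g'(y_D)=0$ and $g''$ bounded below by a positive constant $c$ that depends only on the geometry (specifically $g''(y) = 4(x_C'-x_B')^2\big((x_C'-x_B')^2+(2y-y_B'-y_C')^2\big)^{-3/2}$, which by Corollary \ref{corol: D is close to origin} satisfies $x_C'-x_B'\sim\sqrt3 R_0$ and $|2y-y_B'-y_C'|\lesssim R_0$ on the relevant range, hence $g''\geq c/R_0$ for a universal $c>0$), a second-order Taylor expansion gives
\beqo
\f{c}{2R_0}\,|y^*-y_D|^2 \leq g(y^*)-g(y_D) \leq CR_0^{2\beta-1},
\eeqo
and therefore $|y^*-y_D|^2 \leq C R_0^{2\beta}$, i.e. $|y^*-y_D|\leq CR_0^{\beta}$, as claimed. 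The case $y^*\geq y_A'-d_2$ is handled separately: there Case 2 of the proof of Proposition \ref{prop: lower bound} already shows the energy strictly exceeds $\sigma(|DA'|+|DB'|+|DC'|)$ by a definite fraction, and in fact one checks that this forces $y_A' - y^* \le d_2 \ll R_0^{\beta}$ together with $|y_D - y_A'|\le \tau R_0$; a slightly more careful bookkeeping (or simply noting $y^*\le y_A'+O(1)$ always, while the upper bound still pins $g$-type quantities) shows $|y^*-y_D|\le CR_0^\beta$ holds trivially in this regime too, since $y^*$ cannot substantially exceed $y_A'$ without violating \eqref{est: upper bound}.

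The main obstacle I anticipate is making the convexity constant uniform and correctly tracking the exponents: one must verify that $g''$ does not degenerate on the whole interval $(y_C'+d_2, y_A'-d_2)$ — this uses that $x_C'-x_B'$ stays comparable to $R_0$ (guaranteed by Corollary \ref{corol: D is close to origin}), so the quadratic-behaviour bound $g(y^*)-g(y_D)\gtrsim R_0^{-1}|y^*-y_D|^2$ is legitimate rather than something weaker like $R_0^{-3}|y^*-y_D|^2$ — and that the choice $\beta = 1-\min\{1/6,(1-\al)/2\}$ in \eqref{def of beta} is exactly what makes $R_0^{2/3}$ and $R_0^{\al}$ both dominated by $R_0^{2\beta-1}$ (indeed $2\beta-1 = 1-2\min\{1/6,(1-\al)/2\} = \max\{2/3,\ \al\}$), so that the final bound reads cleanly as $CR_0^\beta$. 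The rest is the elementary Taylor estimate and careful substitution of the already-established bounds.
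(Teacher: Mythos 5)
Your argument is exactly the paper's proof: combine the upper bound \eqref{est: upper bound} with the lower bound \eqref{lower bound by y*}, and use the strict convexity of $g$ (whose minimizer is $y_D$ by Appendix \ref{minimization y*}) to convert the energy gap $CR_0^{\max\{2/3,\al\}}=CR_0^{2\beta-1}$ into $|y^*-y_D|\le CR_0^\beta$ — the paper calls this last step ``straightforward,'' and your Taylor/second-derivative computation with $g''\gtrsim R_0^{-1}$ is precisely the missing detail, correctly matched to the definition \eqref{def of beta} of $\beta$. One small cleanup: in the regime $y^*\ge y_A'-d_2$ the right conclusion is not that the bound ``holds trivially'' there but that Case 2 of Proposition \ref{prop: lower bound} gives energy at least $\tfrac{49}{50}2\sqrt3 R_0\sigma$, which contradicts \eqref{est: upper bound}, so that case simply cannot occur.
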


\begin{proof}
By examining the proof of Proposition \ref{prop: lower bound}, we have 
\beqo
\int_{B_{R_0+1}}\left(\f12|\na\tu|^2+W(\tu)\right)\,dz\leq \int_{B_{R_0}}\left( \f12|\na\tu|^2+W(\tu) \right)\,dz+C(\delta,W).
\eeqo

This together with the upper bound \eqref{est: upper bound} and \eqref{lower bound by y*} implies

\begin{equation}
\begin{split}
\sigma\bigg[ (y_A'-y^*)+ &\sqrt{(x_C'-x_B')^2+(2y^*-y_B'-y_C')^2}\bigg]\\
&\leq \sigma(|DA|+|DB|+|DC|)+CR_0^\al+CR_0^{\f23}.
\end{split}
\end{equation}

From Appendix \ref{minimization y*}, the left-hand side attains its minimum value $\sigma(|DA|+|DB|+|DC|)$ when $y^*=y_D$. When the functional is perturbed by a small quantity $R_0^\al+R_0^{\f23}$, it is straightforward to show that $y^*$ can be perturbed away from $y_D$ by an amount no greater than $CR_0^{\beta}$, where $\beta$ is defined by \eqref{def of beta}.

\end{proof}

\section{Localization of the diffuse interface}\label{sec: loc of interface}
    
We insist on the choice of $A,B,C,D$ at the beginning of Section \ref{sec: lower upper bdd}, and assume without loss of generality that $\overrightarrow{DA}$ is parallel to the positive $y$-axis. Then the triod defined by 
$$
T_{ABC}=DA\cup DB\cup DC
$$
divides $B_{R_0}$ into three regions:
\begin{align*}
    \mathcal{D}_1 &:=\text{ the region enclosed by }DA,\,DB,\,\pa B_{R_0};\\
    \mathcal{D}_2 &:=\text{ the region enclosed by }DA,\,DC,\,\pa B_{R_0};\\
    \mathcal{D}_3 &:=\text{ the region enclosed by }DB,\,DC,\,\pa B_{R_0}.    
\end{align*}

For any $\g>0$, we define the diffuse interface $\mathcal{I}_\g$ by
\begin{equation*}
    \mathcal{I}_\g:= \{z: |u(z)-a_i|\geq \g,\ \forall\, i=1,2,3\}.
\end{equation*}

From now on we fix $\g=2C\delta$, where $C=C(W)$ is the constant in Lemma \ref{lemma: on bdy, close to each phase} which measures the closeness of $u$ to $a_i$ on arc $I_i$. Note that $C$ is independent of $\delta$, which allows $\g$ to be arbitrarily small by choosing suitably small $\delta$. The main result of this section is the following proposition which states that $\mathcal{I}_{\g}$ is contained in a $O(R_0^\beta)$ neighborhood of $T_{ABC}$. 

\begin{proposition}\label{prop: loc of interface}
 There exists a constant $C=C(\g,W)$, such that for sufficiently small $\e$ and associated $R_0$, it holds that
\beq\label{diffused interface close to T}
 \quad \mathcal{I}_{\g}\cap B_{R_0}\subset N_{CR_0^{\beta}}(T_{ABC}):=\{z\in B_{R_0}: \dist(z,T_{ABC})\leq CR_0^\beta\}.
\eeq
Moreover, there are positive constants $K=K(M,W)$ and $k=k(W)$ such that
\beq\label{distance to a_i}
\vert u(z)-a_i\vert \leq Ke^{-{k}(\dist(z,\pa \mathcal{D}_i)-CR_0^\beta)^+},\quad z\in \mathcal{D}_i,\ i=1,2,3,
\eeq
where $(a)^+=\max\{a,0\}$.  
\end{proposition}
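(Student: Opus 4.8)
The plan is to prove the two assertions separately and in this order: first the geometric localization \eqref{diffused interface close to T}, then the exponential decay \eqref{distance to a_i}, the latter being a fairly routine consequence of the former.

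\emph{Localization.} I would argue by contradiction. Suppose there is $z_0\in\mathcal{I}_\g\cap B_{R_0}$ with $\dist(z_0,T_{ABC})=:d>\Lambda R_0^\beta$ for a large constant $\Lambda$ (which will become the constant $C$ in \eqref{diffused interface close to T}). Points of $\mathcal{I}_\g$ near $\pa B_{R_0}$ are dealt with first: by Lemma \ref{lemma: on bdy, close to each phase}, $|u-a_i|<C_0\delta<\g$ on each arc $I_i$, so \eqref{uniform C1 bound} and continuity keep $\mathcal{I}_\g$ an $O(1)$ distance away from each $I_i$; hence a point of $\mathcal{I}_\g$ lying within $O(1)$ of $\pa B_{R_0}$ must project radially into one of the transitional arcs $I_{ij}$, which have length $\le C\delta^{-2}$ and contain the feet $A,B,C$ of $T_{ABC}$, so it is in fact $O(\delta^{-2})\ll R_0^\beta$-close to $T_{ABC}$. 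We may therefore assume $z_0$ lies well inside some sector, say $\mathcal{D}_1$, at distance $\gtrsim\Lambda R_0^\beta$ from $DA\cup DB$. The heart of the argument is a quantitative lower bound showing that such an excursion of the diffuse interface costs extra energy. It is obtained by sharpening the slicing argument in the proof of Proposition \ref{prop: lower bound}: one slices $B_{R_0}$ by families of lines transverse and parallel to the three arms of $T_{ABC}$, applies the one--dimensional estimate Lemma \ref{lemma: 1D energy estimate} on each slice while keeping track of where the transition along the slice occurs, and uses the decomposition $W=\sin^2\t\,W+\cos^2\t\,W$ (optimized in $\t$) to recover the true Euclidean length of the perturbed interface rather than merely its coordinate projections. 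Carrying this out yields
\[
\int_{B_{R_0}}\Big(\tfrac12|\na u|^2+W(u)\Big)\,dz\ \ge\ \sigma\big(|DA|+|DB|+|DC|\big)+c\,\frac{d^2}{R_0}-C R_0^{2/3},
\]
with $c=c(W,\g)>0$. The quadratic gain $d^2/R_0$ is exactly the phenomenon behind Lemma \ref{lemma: est of y*}: the geometric functional controlling the lower bound (a linear term plus a square root, as in \eqref{minimization problem: y*}) has a nondegenerate minimum at the triod configuration, with second derivative of order $R_0^{-1}$, so a displacement $d$ of the interface is penalized by $\gtrsim d^2/R_0$ in energy. Comparing with the upper bound \eqref{est: upper bound} gives $c\,d^2/R_0\le C R_0^\al+C R_0^{2/3}\le C R_0^{\max\{\al,2/3\}}$, hence $d\le C R_0^{(1+\max\{\al,2/3\})/2}=C R_0^\beta$ by the definition \eqref{def of beta} of $\beta$. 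Taking $\Lambda$ larger than this $C$ contradicts the hypothesis, which proves \eqref{diffused interface close to T}.

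\emph{Exponential decay.} Fix $i\in\{1,2,3\}$, let $C$ be the constant of \eqref{diffused interface close to T}, and set $\Om_i:=\mathcal{D}_i\setminus N_{CR_0^\beta}(T_{ABC})$. For $z\in\mathcal{D}_i$ every segment joining $z$ to a point of $T_{ABC}$ exits the open (convex) sector $\mathcal{D}_i$, so $\dist(z,T_{ABC})\ge\dist(z,\pa\mathcal{D}_i)$; hence \eqref{diffused interface close to T} gives $\Om_i\cap\mathcal{I}_\g=\varnothing$, i.e. $\Om_i$ is covered by the three pairwise disjoint open sets $\{|u-a_k|<\g\}$, $k=1,2,3$. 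Since $\g=2C_0\delta$ with $C_0$ as in Lemma \ref{lemma: on bdy, close to each phase} and the transitional arcs have length $\le C\delta^{-2}\ll R_0^\beta$, the set $\Om_i$ is connected and $\overline{\Om_i}$ meets $\pa B_{R_0}$ only along the middle portion of $I_i$, where $|u-a_i|<C_0\delta<\g$; therefore $|u-a_i|<\g<r_0$ on all of $\Om_i$ and $|u-a_i|\le\g$ on $\pa\Om_i$ ($r_0$ being the constant from Lemma \ref{lemma: maximum principle}). Put $w:=u-a_i$ and $\phi:=\tfrac12|w|^2$. Using $W_u(a_i)=0$, a first--order expansion of $W_u$ at $a_i$, the coercivity $\xi^TW_{uu}(a_i)\xi\ge c_1|\xi|^2$ from (H1), and the smallness of $\g$, one has $W_u(a_i+w)\cdot w\ge\tfrac{c_1}{2}|w|^2$, so
\[
\D\phi=|\na w|^2+w\cdot\D w=|\na w|^2+w\cdot W_u(a_i+w)\ \ge\ \tfrac{c_1}{2}|w|^2=c_1\phi\qquad\text{in }\Om_i .
\]
Thus $\phi$ is subharmonic; since $|u|\le M$ and $\phi\le\tfrac12\g^2$ on $\pa\Om_i$, the maximum principle gives $\phi\le\tfrac12\g^2$ on $\Om_i$, and comparing $\phi$ on each ball $B_\rho(z)\subset\Om_i$ with the radial solution of $\D\Psi=c_1\Psi$ (a modified Bessel profile decaying like $\rho^{-1/2}e^{-\sqrt{c_1}\rho}$) yields $\phi(z)\le C\g^2 e^{-\sqrt{c_1}\rho}$ for $\rho=\dist(z,\pa\Om_i)$. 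Since $\dist(z,\pa\Om_i)\ge\dist(z,\pa\mathcal{D}_i)-CR_0^\beta$, this reads $|u(z)-a_i|\le K e^{-k(\dist(z,\pa\mathcal{D}_i)-CR_0^\beta)}$ on $\Om_i$ with $k=k(W)>0$, and for $z\in\mathcal{D}_i\setminus\Om_i$ the bound is trivial once $K\ge M$ thanks to the $(\,\cdot\,)^+$; this is \eqref{distance to a_i}.

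\emph{The main obstacle} is the quantitative lower bound in the localization step. In the diffuse setting the ``interface'' is a thin two--dimensional region, not a curve, so one cannot literally speak of its length; making the ``displacement costs $d^2/R_0$ in energy'' heuristic rigorous requires choosing the slicing directions adapted to the three arms of $T_{ABC}$, controlling the location of the one--dimensional transition along each individual slice (a slicewise version of the single scalar $y^*$ handled in Lemma \ref{lemma: est of y*}), and analyzing the near--equality case of the Cauchy--Schwarz/$\sin^2+\cos^2$ step so that a small \emph{global} energy excess is converted into a \emph{uniform pointwise} bound on the interface position. Everything else --- the reduction of the boundary cases, the connectedness of $\Om_i$, and the elliptic comparison --- is routine given Lemmas \ref{lemma: 1D energy estimate}, \ref{lemma: maximum principle}, Corollary \ref{corol: D is close to origin}, and Proposition \ref{prop: upper bound}.
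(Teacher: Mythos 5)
Your second step (the exponential decay, given the localization) is sound and matches the standard argument the paper invokes via \cite[Proposition 6.4]{afs-book}: once $\vert u-a_i\vert\leq\g$ on a region whose boundary is $CR_0^\beta$--close to $\pa\mathcal{D}_i$, the differential inequality $\D(\tfrac12\vert u-a_i\vert^2)\geq c_1\cdot\tfrac12\vert u-a_i\vert^2$ and radial comparison give \eqref{distance to a_i}. The problem is in the first step.

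The claimed lower bound
$E(u,B_{R_0})\geq\sigma(|DA|+|DB|+|DC|)+c\,d^2/R_0-CR_0^{2/3}$ for a \emph{single} point $z_0\in\mathcal{I}_\g$ at distance $d$ from $T_{ABC}$ is not obtainable from the slicing argument, and this is a genuine gap, not a technicality. The quadratic penalty $d^2/R_0$ that you correctly identify behind Lemma \ref{lemma: est of y*} is produced only by a \emph{coherent} displacement of the transition layer: in \eqref{est on Om1: new} the vertical excess is $C_1\ch(\mathcal{A})$, where $\mathcal{A}$ is the set of offsets $t$ whose slice $l_1^t$ meets $\{\vert u-a_1\vert>\g\}$, and the Pythagorean combination converts this into an energy excess of order $(\ch(\mathcal{A}))^2/R_0$. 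An isolated excursion $z_0$ at distance $d$ only forces the slices passing within an $O(1)$ neighborhood of $z_0$ to be bad (by the gradient bound \eqref{uniform C1 bound}), i.e.\ $\ch(\mathcal{A})\gtrsim 1$, yielding an excess of order $1/R_0$ \emph{independent of} $d$ --- far below the $O(R_0^{\max\{\al,2/3\}})$ slack between the upper and lower bounds. So energy comparison alone cannot exclude a thin tentacle or island of $\mathcal{I}_\g$ deep inside a sector, and the ``near-equality analysis of Cauchy--Schwarz'' you propose as the resolution does not supply the missing mechanism. What the paper actually does is: (a) use the slicing excess to show $\ch(\mathcal{A}),\ch(\mathcal{B})\leq CR_0^\beta$, hence there is a \emph{single good} offset $t_0\leq C_3R_0^\beta$ with $\vert u-a_1\vert\leq\g$ on all of $l_1^{t_0}\cup l_2^{t_0}$; and then (b) apply the \emph{variational} maximum principle (Lemma \ref{lemma: maximum principle}), i.e.\ the minimality of $u$, on the region $\mathcal{D}_{1,\g}$ enclosed by $l_1^{t_0}$, $l_2^{t_0}$ and the arc (where Lemma \ref{lemma: on bdy, close to each phase} controls the boundary values), to conclude $\vert u-a_1\vert\leq\g$ throughout the deep interior. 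Step (b) is the ingredient your argument is missing; without it, \eqref{diffused interface close to T} does not follow.
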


\begin{proof}
    Let $C_0$ be the constant in Lemma \ref{lemma: est of y*} and $\tu$ be defined as the extension of $u|_{B_{R_0}}$ onto $B_{R_0+1}$ in the proof of Proposition \ref{prop: lower bound}. 

    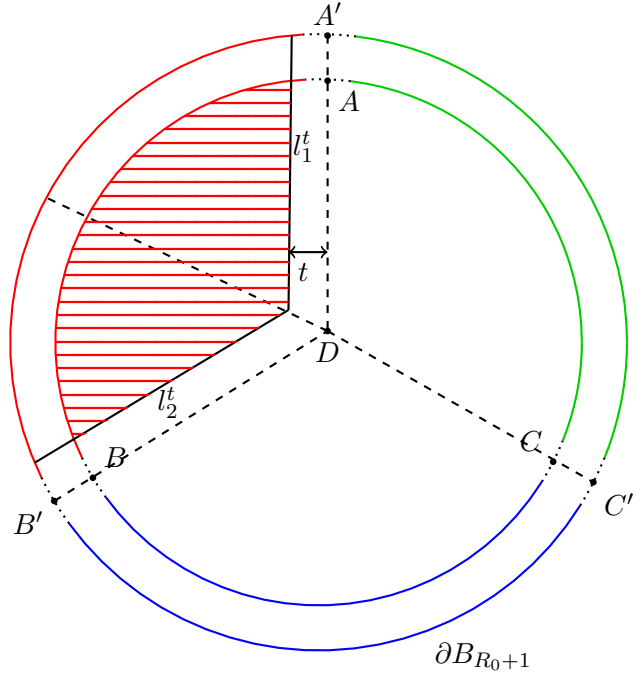
\begin{figure}[ht]
\centering
\begin{tikzpicture}[thick]
\draw [black!20!green,domain=0:83] plot ({3.5*cos(\x)}, {3.5*sin(\x)});
\draw [black!20!green,domain=338:360] plot ({3.5*cos(\x)}, {3.5*sin(\x)});
\draw [dotted,domain=83:93] plot ({3.5*cos(\x)}, {3.5*sin(\x)});
\draw [red,domain=93:206] plot ({3.5*cos(\x)}, {3.5*sin(\x)});
\draw [dotted,domain=206:216] plot ({3.5*cos(\x)}, {3.5*sin(\x)});
\draw [blue,domain=216:328] plot ({3.5*cos(\x)}, {3.5*sin(\x)});
\draw [dotted,domain=328:338] plot ({3.5*cos(\x)}, {3.5*sin(\x)});

\draw [black!20!green,domain=0:83] plot ({4.1*cos(\x)}, {4.1*sin(\x)});
\draw [black!20!green,domain=338:360] plot ({4.1*cos(\x)}, {4.1*sin(\x)});
\draw [dotted,domain=83:93] plot ({4.1*cos(\x)}, {4.1*sin(\x)});
\draw [red,domain=93:206] plot ({4.1*cos(\x)}, {4.1*sin(\x)});
\draw [dotted,domain=206:216] plot ({4.1*cos(\x)}, {4.1*sin(\x)});
\draw [blue,domain=216:328] plot ({4.1*cos(\x)}, {4.1*sin(\x)});
\draw [dotted,domain=328:338] plot ({4.1*cos(\x)}, {4.1*sin(\x)});

\draw ({4.1*cos(290)}, {4.1*sin(290)}) node [below right]{$\pa B_{R_0+1}$};

\draw [dashed] ({4.1*cos(211)},{4.1*sin(211)})--(0.12,0.15);
\draw[dashed] (0.12,0.15)--({4.1*cos(333)},{4.1*sin(333)});
\draw[dashed] (0.12,0.15)--({4.1*cos(152)},{4.1*sin(152)});
\draw [dashed] (0.12,4.08)--(0.12,0.15);
\filldraw (0.12,0.15) circle (0.03cm) node[below]{$D$};
\filldraw (0.12,3.48) circle (0.03cm) node[below right]{$A$};
\filldraw ({3.5*cos(211)},{3.5*sin(211)}) circle (0.03cm) node[above right]{$B$};
\filldraw ({3.5*cos(333)},{3.5*sin(333)}) circle (0.03cm) node[above left]{$C$};

\filldraw (0.12,4.08) circle (0.03cm) node[above]{$A'$};
\filldraw ({4.1*cos(211)},{4.1*sin(211)}) circle (0.03cm) node[below left]{$B'$};
\filldraw ({4.1*cos(333)},{4.1*sin(333)}) circle (0.03cm) node[below right]{$C'$};

\path ({3.5*cos(202)},{3.5*sin(202)}) coordinate (M);
\path ({3.5*cos(96)},{3.5*sin(96)}) coordinate (N);
\path (-0.4,0.43) coordinate (Q);
\draw ({4.1*cos(203)},{4.1*sin(203)})--(Q) node at (-2,-0.8) {$l_2^t$};
\draw ({4.1*cos(95)},{4.1*sin(95)})--(Q) node at (-0.2,2.6) {$l_1^t$};
\fill[pattern={Lines[angle=0,distance=5pt]}, pattern color=red] (M)--(Q)--(N) arc(96:202:3.5);
\draw [<->] (-0.4,1.2)--(0.12,1.2) node at (-0.2,1.2) [below] {$t$}; 
 
\end{tikzpicture}
\caption{Definition of $l_1^t$, $l_2^t$. In the red shaded region, $|u(z)-a_1|\leq \g$}.
\label{fig l1t l2t}
\end{figure}

For any $t\in [0,\f12R_0]$, we define the line segments (see Figure \ref{fig l1t l2t})
\begin{equation}
\begin{split}
    &l_1^t:=\{(x_D-t,y):\ \f{y-y_D}{t}\geq \f{\sqrt{3}}{3}\}\cap \overline{B}_{R_0+1},\\
    &l_2^t:=\{(x,y):\ \f{y-(y_D+\f{\sqrt{3}}{3}t)}{x-(x_D-t)}=\f{\sqrt3}{3},\; x\leq x_D-t\}\cap \overline{B}_{R_0+1}. 
\end{split}
\end{equation}
From Lemma \eqref{lemma: est of y*},
when $t\geq \sqrt3 C_0 R_0^{\beta}$, we have that $l_1^t \subset \overline{\Om}_1$, for $\Om_1$ defined by $B_{R_0+1}\cap \{y\geq y^*\}$.

We set
\begin{equation*}
    \mathcal{A}:=\{t: t\in [\sqrt3 C_0 R_0^{\beta},\f12 R_0], \; \max\limits_{z\in l_1^t} \vert \tu(z)-a_1\vert >\g\}.
\end{equation*}
If $\mathcal{A}=\varnothing$ then we trivially get  \eqref{est of H^1(A)}. Now suppose $\mathcal{A}\neq \varnothing$, we show the measure of $\mathcal{A}$ is $O(R_0^{\beta})$. The proof relies on the part of the lower bound over $\Om_1$ derived in \eqref{est: ene in Omega_1} only considers the horizontal gradient, thereby allowing for the addition of vertical displacement to provide improvement.

For any $t\in \mathcal{A}$, there exists a point $z_t\in l_1^t$ such that 
\beqo
\vert \tu(z_t)-a_1\vert >\g.
\eeqo
By the boundary data on $\pa B_{R_0+1}$, 
\beqo
\tu(x_D-t,\sqrt{(R_0+1)^2-(x_D-t)^2})= a_1. 
\eeqo
Thus there exists a constant $C_1:=C_1(\g,W)$ such that 
\beq\label{energy on l_1^t}
\int_{y_D+\f{\sqrt3t}{3}}^{\sqrt{(R_0+1)^2-(x_D-t)^2}} \left(\f{\lambda}{2}\vert \pa_y \tu\vert ^2+\f{1}{\lambda}W(\tu)\right)\,dy\geq C_1, \quad \forall \lambda>0.
\eeq
Set 
\beqo
\kappa:=\f{\mathcal{H}^1(\mathcal{A})C_1}{\s(y_A'-y^*)}.
\eeqo
We compute the energy on $\Om_1$:
\begin{equation}\label{est on Om1: new}
    \begin{split}
        &\int_{\Om_1} \left(\f{1}{2}\vert \na \tu\vert ^2+W(\tu)\right)\,dxdy\\
        = & \int_{\Om_1} \left( \f{1}{2}\vert \pa_x \tu\vert ^2+\f{1}{1+\kappa^2}W(\tu) \right)\,dxdy\\
        &\qquad +\int_{\Om_1}\left( \f{1}{2}\vert \pa_y \tu\vert ^2+\f{\kappa^2}{1+\kappa^2} W(\tu) \right)\,dydx\\
        \geq & \f{1}{\sqrt{1+\kappa^2}} \int_{\Om_1} \left(\f{\sqrt{1+\kappa^2}}{2} \vert \pa_x \tu\vert ^2 +\f{1}{\sqrt{1+\kappa^2}}W(\tu)  \right)\,dxdy\\
        &\qquad  + \f{\kappa}{\sqrt{1+\kappa^2}} \int_{\Om_1}\left(\f{\sqrt{1+\kappa^2}}{2\kappa} \vert \pa_y \tu\vert ^2+\f{\kappa}{\sqrt{1+\kappa^2}} W(\tu)  \right)\,dydx\\
        \geq & \f{1}{\sqrt{1+\kappa^2}}\sigma(y_A'-y^*)\\
        &\qquad +\f{\kappa}{\sqrt{1+\kappa^2}}\int_\mathcal{A}\int_{l_1^t} \left(\f{\sqrt{1+\kappa^2}}{2\kappa} \vert \pa_y \tu\vert ^2+\f{\kappa}{\sqrt{1+\kappa^2}} W(\tu)  \right)\,dydx\\
        \geq & \f{1}{\sqrt{1+\kappa^2}} \sigma (y_A'-y^*)+\f{\kappa}{\sqrt{1+\kappa^2}}\mathcal{H}^1(\mathcal{A})C_1\\
        =& \sqrt{\sigma^2(y_A'-y^*)^2+(C_1\mathcal{H}^1(\mathcal{A}))^2}.
    \end{split}
\end{equation}

This, together with \eqref{ene: om2}, updates the total energy in $B_{R_0+1}$:
\beqo
\begin{split}
    & \int_{B_{R_0+1}} \left( \f{1}{2}\vert \na \tu\vert ^2+ W(\tu)  \right)\,dz\\
    \geq & \sqrt{\sigma^2(y_A'-y^*)^2+(C_1\mathcal{H}^1(\mathcal{A}))^2}+\sigma\sqrt{(x_C'-x_B')^2+(2y^*-y_B'-y_C')^2}-CR_0^{\f23} \\
    \geq& \sigma\bigg[ (y_A'-y^*)+ \sqrt{(x_C'-x_B')^2+(2y^*-y_B'-y_C')^2}\bigg]-CR_0^{\f23}+\f{(C_1\mathcal{H}^1(\mathcal{A}))^2}{2\sigma(y_A'-y^*)}\\
    \geq & \sigma(|DA'|+|DB'|+|DC'|)-CR_0^{\f23}+\f{(C_1\mathcal{H}^1(\mathcal{A}))^2}{2\sigma(y_A'-y^*)}.
\end{split}
\eeqo
Combining this with the upper bound \eqref{est: upper bound} implies 
\beq\label{est of H^1(A)}
\f{(C_1\mathcal{H}^1(\mathcal{A}))^2}{2\sigma (y_A'-y^*)}\leq C(R_0^{\f23}+R_0^\al)\; \Rightarrow \; \mathcal{H}^1(\mathcal{A})\leq C_2(\g,W) R_0^{\beta}. 
\eeq

We define
\beq\label{def of B}
\mathcal{B}:=\{t: t\in [\sqrt3C_0R_0^{\beta},\f12R_0], \; \max\limits_{z\in l_2^t} \vert u(z)-a_1\vert>\g\}.
\eeq
An analogous computation implies 
\beq\label{est of H^1(B)}
\mathcal{H}^1(\mathcal{B})\leq C_2(\g,W)R_0^{\beta}. 
\eeq

Set
\beqo
C_3(\g,W):=\sqrt3 C_0+3C_2.
\eeqo
From \eqref{est of H^1(A)} and \eqref{est of H^1(B)} it follows that  there exists $t_0\in [\sqrt3C_0R_0^{\beta}, C_3R_0^{\beta}]$ (note that $R_0$ is chosen to be large enough so that $C_3R_0^{\beta}<\f12R_0$) such that 
\beqo
 \vert \tu(z)-a_1\vert \leq \g,\quad \forall z\in l_1^{t_0}\cup l_2^{t_0}.
\eeqo

Let $\mathcal{D}_{1,\g}\subset \mathcal{D}_1$ denote the region enclosed by $l_1^{t_0}$, $l_2^{t_0}$ and $\pa B_{R_0}$. By Lemma \ref{lemma: on bdy, close to each phase}, we have
\beqo
\vert u(z)-a_1\vert \leq \g,\quad \forall z\in \pa \mathcal{D}_{1,\g}.
\eeqo
Here we write $u$ instead of $\tu$ because $u=\tu$ on $\overline{B}_{R_0}$. 

By the variational maximum principle (see Lemma \ref{lemma: maximum principle}), given $\g$ (or equivalently, $\delta$) small enough, we conclude that
\begin{equation}\label{maximum principle}
    \vert u(z)-a_1\vert \leq \g\ \text{ on }\mathcal{D}_{1,\g}.
\end{equation}
This further implies that the diffuse interface $\mathcal{I}_{\g}\cap \mathcal{D}_1$ is contained in a $C_3R_0^{\b}$ neighborhood of $T$. The same argument works for $\mathcal{I}_{\g}\cap \mathcal{D}_j$ for $j=2,3$ and we conclude the proof of the first part \eqref{diffused interface close to T}.

Utilizing \eqref{maximum principle}, hypothesis (H1) and the comparison principle for elliptic equations, the exponential decay estimate \eqref{distance to a_i} follows (cf. \cite[Proposition 6.4]{afs-book}), which completes the proof. 

\end{proof}

\section{Rescaling to the unit disk $B_1$}\label{sec: rescale}

In the following two sections, we initially choose a small $\delta$, ensuring that the specified smallness criteria for $\delta$ in the proofs of Lemma \ref{lemma: on bdy, close to each phase} and Proposition \ref{prop: loc of interface} are met. The choice of $\delta$ depends only on the potential $W$. Subsequently we determine a possibly even smaller $\e$ and the associated $R_0(\e)\in (R(\e),2R(\e))$ (see Section \ref{sec: existence} for the selection of $R_0(\e)$), ensuring all energy bounds established in Section \ref{sec: existence}, Section \ref{sec: lower upper bdd} and Section \ref{sec: loc of interface} are satisfied, and also fulfilling the requirements on the largeness of $R_0$ in the proofs of Lemma \ref{lemma: Yi nonempty} and Lemma \ref{lemma: on bdy, close to each phase}. Also it is noteworthy that for any $R>R(\e)$, there exists $R_0\in (R, 2R)$ such that all the results remain valid. Then we consider the rescalings
\begin{align*}
    u_{R_0}(z)&:=u(R_0z), \text{ for }z\in\overline{B_1},\\
    \tilde{A}&:= (\f{x_A}{R_0},\f{y_A}{R_0}),\\
    \tilde{B}&:= (\f{x_B}{R_0},\f{y_B}{R_0}),\\
    \tilde{C}&:= (\f{x_C}{R_0},\f{y_C}{R_0}),\\
    \tilde{D}&:= (\f{x_D}{R_0},\f{y_D}{R_0}),\\
    {I}_{R_0}^i&:=\{z\in\pa B_1:\,R_0z\in I_i\},\ \ i=1,2,3,\\
    \mathcal{D}_{R_0}^i&:=\{z\in B_1:\, R_0z\in\mathcal{D}_i \},\ \ i=1,2,3,\\
    T_{R_0}& := \tilde{D}\tilde{A}\cap \tilde{D}\tilde{B}\cap \tilde{D}\tilde{C}.
\end{align*}

From Section \ref{sec: bdy data} and Proposition \ref{prop: loc of interface} we have the following properties:

\begin{enumerate}
    \item On $\pa B_1$, 
    \begin{equation*}
        |u_{R_0}(z)-a_1|\leq C\delta,\ \forall z\in I_{R_0}^i,\,i=1,2,3,
    \end{equation*}
    where the constant $C$ only depends on $W$.
    \item The diffuse interface is contained in an $O(R_0^{\beta-1})$ neighborhood of $T_{R_0}$, i.e. \begin{equation*}
        \{z\in B_1: |u_{R_0}-a_i|\geq 2C\delta,\, \forall i=1,2,3\}\subset \{z\in B_1:\, \dist(z,T_{R_0})\leq C(\d,W)R_0^{\beta-1}\}.
    \end{equation*}
    \item For $i=1,2,3$, within $\mathcal{D}_{R_0}^i$, 
    \begin{equation*}
        |u_{R_0}(z)-a_i|\leq Ke^{-kR_0 (\dist(z,\pa \mathcal{D}_{R_0}^i)-CR_0^{\beta-1})^+}.
    \end{equation*}
\end{enumerate}

Denote the approximated triple junction map associated with $T_{R_0}$ by
\begin{equation}
    U_{R_0}(z):=a_1\chi_{\mathcal{D}_{R_0}^1}(z)+a_2\chi_{\mathcal{D}_{R_0}^2}(z)+a_3\chi_{\mathcal{D}_{R_0}^3}(z), \quad z\in B_1,
\end{equation}
The aforementioned properties imply the following estimate on the $L^1$-closeness of $u_{R_0}$ and $U_{R_0}$.  

\begin{proposition}\label{prop: L1 closeness of u and U}
    There is a constant $C_1=C_1(W)$ that satisfies
    \beq\label{est: L^1 closeness}
    \|u_{R_0}-U_{R_0}\|_{L^1(B_1)}\leq C_1R_0^{\beta-1}.
    \eeq
\end{proposition}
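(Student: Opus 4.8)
The plan is to split the disk $B_1$ into the thin neighborhood of the rescaled triod $T_{R_0}$ where $u_{R_0}$ and $U_{R_0}$ may differ by an $O(1)$ amount, and the three complementary regions where the exponential decay estimate from Proposition \ref{prop: loc of interface} forces the integrand to be tiny. First I would record that on the triod neighborhood $N:=\{z\in B_1:\dist(z,T_{R_0})\leq C(\delta,W)R_0^{\beta-1}\}$ we have the crude bound $|u_{R_0}(z)-U_{R_0}(z)|\leq |u_{R_0}(z)|+|a_i|\leq M+\max_i|a_i|\leq C(W)$ using the uniform bound \eqref{uniform C1 bound}; since $T_{R_0}$ is a union of three line segments of length at most $1$ inside $B_1$, its $C(\delta,W)R_0^{\beta-1}$-tubular neighborhood has $\mathcal{L}^2$-measure at most $C(\delta,W)R_0^{\beta-1}$, so the contribution of $N$ to $\|u_{R_0}-U_{R_0}\|_{L^1}$ is $\leq C(W)R_0^{\beta-1}$, which is of the claimed order.

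The remaining estimate is over $B_1\setminus N$. On each connected piece $\mathcal{D}_{R_0}^i\setminus N$ we have $U_{R_0}=a_i$, so it suffices to bound $\int_{\mathcal{D}_{R_0}^i\setminus N}|u_{R_0}(z)-a_i|\,dz$. Here I would invoke property (3) above (the rescaled form of \eqref{distance to a_i}): for $z\in\mathcal{D}_{R_0}^i$ with $\dist(z,\partial\mathcal{D}_{R_0}^i)>CR_0^{\beta-1}$ we have $|u_{R_0}(z)-a_i|\leq Ke^{-kR_0(\dist(z,\partial\mathcal{D}_{R_0}^i)-CR_0^{\beta-1})^+}$. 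Writing $s=\dist(z,\partial\mathcal{D}_{R_0}^i)$ and using the coarea formula (or simply Fubini in a tubular coordinate adapted to $\partial\mathcal{D}_{R_0}^i$, noting the level sets $\{s=\text{const}\}$ have $\mathcal{H}^1$-measure bounded by a universal constant inside $B_1$), the integral is dominated by $C\int_{CR_0^{\beta-1}}^{2}Ke^{-kR_0(s-CR_0^{\beta-1})}\,ds\leq CK/(kR_0)$, which is $\leq CR_0^{-1}\leq CR_0^{\beta-1}$ since $\beta<1$. Summing over $i=1,2,3$ and adding the contribution of $N$ yields \eqref{est: L^1 closeness} with $C_1=C_1(W)$, after absorbing the $\delta$-dependence into $W$-dependence since $\delta=\delta(W)$ is now fixed.

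The only mild subtlety is the geometry near the center $\tilde D$ and near $\partial B_1$: the exponential-decay estimate measures distance to the \emph{whole} boundary $\partial\mathcal{D}_{R_0}^i$, which includes the arc of $\partial B_1$, so a point close to $\partial B_1$ but far from the triod is not controlled by the decay estimate. This is harmless: that arc-neighborhood is itself of measure $O(R_0^{\beta-1})$ if one restricts attention to an $O(R_0^{\beta-1})$-collar of $\partial B_1$, and for $z$ further than $R_0^{\beta-1}$ from $\partial B_1$ and outside $N$ one has $\dist(z,\partial\mathcal{D}_{R_0}^i)=\dist(z,T_{R_0})$ (up to the collar), so the argument above applies verbatim. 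I expect the main (though still routine) obstacle to be setting up the tubular/coarea bookkeeping cleanly so that the $\mathcal{H}^1$-measure of the relevant level sets and of $N$ is genuinely $O(1)$ and $O(R_0^{\beta-1})$ respectively, uniformly in $R_0$; once that is in place the exponential integral does all the work.
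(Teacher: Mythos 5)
Your proposal is correct and follows essentially the same route as the paper: a crude $L^\infty$ bound times the $O(R_0^{\beta-1})$ measure of the region within distance $CR_0^{\beta-1}$ of $\partial\mathcal{D}_{R_0}^i$ (the paper's $\Lambda_1$, which already absorbs both the triod neighborhood and the collar of $\partial B_1$ that you treat as a separate subtlety), plus integration of the exponential decay over the complement to get an $O(R_0^{-1})$ contribution (the paper's $\Lambda_2$, done there by bounding the distance to each of the three boundary pieces separately rather than via the coarea formula). No gaps.
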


\begin{proof}
Let $C=C(W)$ be the constant in Proposition \ref{prop: loc of interface}. Note that this constant initially depends on $\delta$ and $W$. However, since we have fixed a $\delta$ that depends on $W$, it now becomes a constant solely dependent on $W$. We have
\begin{align*}
    \int_{\mathcal{D}_{R_0}^1} |u_{R_0}(z)-U_{R_0}(z)| \,dz &=\int_{\mathcal{D}_{R_0}^1} |u_{R_0}(z)-a_1|\,dz\\
    &=\int_{\dist(z,\pa \mathcal{D}_{R_0}^1)<CR_0^{\beta-1}} |u_{R_0}(z)-a_1|\,dz+\int_{\dist(z,\pa \mathcal{D}_{R_0}^1)\geq CR_0^{\beta-1}} |u_{R_0}(z)-a_1|\,dz\\
    &=: \Lambda_1+\Lambda_2.
\end{align*}

Using $\mathcal{H}^2(\mathcal{D}_{R_0}^{1}\cap\{\dist(z,\pa \mathcal{D}_{R_0}^1)<CR_0^{\beta-1}\})\leq CR_0^{\beta-1}$, and $|u|\leq M$ by \eqref{uniform C1 bound}, we get that 
\beq\label{est: Lambda 1}
\Lambda_1\leq C(M) R_0^{\beta-1}.
\eeq

We observe that $\pa D_{R_0}^1$ consists of a circular arc $I_{R_0}^1$ and two line segments $\tilde{D}\tilde{A}$, $\tilde{D}\tilde{B}$. When $\dist(z,\pa \mathcal{D}_{R_0}^1)\geq CR_0^{\beta-1}$, 
\begin{equation*}
    \dist(z,\pa \mathcal{D}_{R_0}^1)=\min \left\{ \dist(z,\tilde{D}\tilde{A}), \dist(z,\tilde{D}\tilde{B}), \dist(z,I_{R_0}^1)\right\}.
\end{equation*}

We compute
\begin{equation*}
    \int_{\dist(z,\pa \mathcal{D}_{R_0}^1)\geq CR_0^{\beta-1}} e^{-kR_0(\dist(z, \tilde{D}\tilde{A})-CR_0^{\beta-1})}\,dz
    \leq 2\int_{0}^1 e^{-kR_0r}\,dr
    \leq \f{2}{kR_0}.
\end{equation*}

Similarly, we have
\begin{equation*}
    \int_{\dist(z,\pa \mathcal{D}_{R_0}^1)\geq CR_0^{\beta-1}} e^{-kR_0(\dist(z, \tilde{D}\tilde{B})-CR_0^{\beta-1})}\,dz
    \leq 2\int_{0}^1 e^{-kR_0r}\,dr
    \leq \f{2}{kR_0}.
\end{equation*}

\begin{equation*}
    \int_{\dist(z,\pa \mathcal{D}_{R_0}^1)\geq CR_0^{\beta-1}} e^{-kR_0(\dist(z, \pa B_1)-CR_0^{\beta-1})}\,dz
    \leq 2\pi \int_{0}^1 (1-r) e^{-kR_0r}\,dr
    \leq \f{2\pi}{kR_0}.
\end{equation*}

Summing the inequalities above, we get
\begin{equation}\label{est: Lambda 2}
\Lambda_2\leq C(k) R_0^{-1}. 
\end{equation}

Combining \eqref{est: Lambda 1} and \eqref{est: Lambda 2} yields 
\begin{equation}
    \int_{\mathcal{D}_{R_0}^1} |u_{R_0}(z)-U_{R_0}(z)|\,dz\leq CR_0^{\beta-1}. 
\end{equation}
The same estimate also applies for $\mathcal{D}_{R_0}^2$ and $\mathcal{D}_{R_0}^3$ and \eqref{est: L^1 closeness} immediately follows.    
\end{proof}

\section{Proof of Theorem \ref{main theorem}}\label{sec: conclusion}

In this final section we will conclude the proof of the main theorem. It suffices to show that the sequential limit $u_0$ in \eqref{seq conv} is unique, or equivalently, indepedent of the sequence $\{r_k\}$. 

We argue by contradiction. Suppose there are two sequences of radii $\{r_k\}_{k=1}^\infty$ and $\{s_k\}_{k=1}^\infty$ that converge to two distinct triple junction maps, i.e. 
\begin{align}
    \label{converge to u1}u_{r_k}(z):=u(r_kz)&\ri u_1(z) \text{  in }L^1_{loc}(\BR^2),\ r_k\ri\infty \text{ as }k\ri\infty,\\
    \label{converge to u2}u_{s_k}(x):=u(s_kz)&\ri u_2(z) \text{  in }L^1_{loc}(\BR^2),\ s_k\ri \infty \text{ as }k\ri\infty,
\end{align}
where $u_1=\sum\limits_{j=1}^3 a_j\chi_{\mathcal{D}_1^j} $, for $\mathcal{P}_1:=\{\mathcal{D}^1_1,\mathcal{D}_1^2,\mathcal{D}_1^3\}$ is a minimal 3-partition of $\BR^2$, with $\pa \mathcal{P}=\bigcup\limits_{j=1}^3 \pa \mathcal{D}_1^j$ consists of three rays emanating from the origin and form 120-degree angles pairwise. Similarly, $u_2$ can be represented as $u_2=\sum\limits_{j=1}^3 a_k\chi_{\mathcal{D}_2^j}$ where $\mathcal{P}_2:=\{\mathcal{D}_2^1,\mathcal{D}_2^2,\mathcal{D}_2^3\}$ is another minimal 3-partition, which centered at the origin.  

For $\e\ll1$, by Section \ref{sec: existence} and Section \ref{sec: rescale} we can find a $R(\e)$ such that for any $R\geq R(\e)$, there exists $R_0\in (R,2R)$ such that the rescaled function $u_{R_0}(x)=u(R_0x)$ is close to the approximate triple junction map $U_{R_0}$ which is defined through Section \ref{sec: bdy data}, \ref{sec: loc of interface}, \ref{sec: rescale}. The $L^1$ closeness is controlled by $O(R_0^{\beta-1})$ for some $\beta\in(0,1)$. Now we fix the choice of $R_0\in (R(\e), 2R(\e))$, and then find a sequence of radii $\{R_i\}_{i=1}^\infty$ such that $R_i\in (2^i(R(\e)), 2^{i+1}R(\e))$ and $u_{R_i}$ satisfies 
\begin{equation*}
    \|u_{R_i}-U_{R_i}\|_{L^1(B_1)}\leq CR_i^{\beta-1}
\end{equation*}
for some $U_{R_i}:=\sum\limits_{j=1}^3 a_j \chi_{\mathcal{D}^j_i}$ with $\{\mathcal{D}^j_i\}$ is a partition of $B_1$ by three rays forming 120-degree angles pairwise (not necessarily centered at the origin). For each $i\in \mathbb{N}^{+}$, we denote 
\begin{align*}
    \tilde{D}_i&:= \bigcap\limits_{j=1}^3\pa \mathcal{D}_i^j\ \text{  is the junction point for three phases of }U_{R_i},\\
    \tilde{A}_i&:= \pa B_1\cap \pa \mathcal{D}_i^1 \cap \pa \mathcal{D}_i^2,\\
    \tilde{B}_i&:= \pa B_1\cap \pa \mathcal{D}_i^1 \cap \pa \mathcal{D}_i^3,\\
    \tilde{C}_i&:= \pa B_1\cap \pa \mathcal{D}_i^2 \cap \pa \mathcal{D}_i^3,\\
    T_i&:= \tilde{D}_i\tilde{A}_i\cap \tilde{D}_i\tilde{B}_i\cap \tilde{D}_i\tilde{C}_i \ \text{ is the triod at the scale $R_i$}. 
\end{align*}
The construction of this $U_{R_i}$ proceeds in the same manner as in the beginning of Section \ref{sec: lower upper bdd}. We first choose $A_i,B_i,C_i$ as the middle points of the three transitional arcs between different phases on $\pa B_{R_i}$, followed by the selection of $D_i$ such that $|D_iA_i|+|D_iB_i|+|D_iC_i|$ is minimized. Then we  divide the coordinates by $R_i$ to rescale all the points $(A_i, B_i, C_i, D_i)$ to $(\tilde{A}_i, \tilde{B}_i,\tilde{C}_i,\tilde{D}_i)\in \overline{B}_1$. All the earlier results presented in Section \ref{sec: bdy data}, \ref{sec: lower upper bdd}, \ref{sec: loc of interface}, \ref{sec: rescale} remain valid upon substituting $(A,B,C,D)$ with $(A_i, B_i, C_i, D_i)$ respectively, and replacing $R_0$ by $R_i$.   

For each $i$, we denote by $\theta_i\in(0,2\pi]$ the angle of the vector $\overrightarrow{D_iA_i}$, which represents the direction of the $a_1$-$a_2$ interface for the approximate triple junction map at scale $R_i$. Given that the angles between each pair of interfaces are all 120 degrees, $U_{R_i}$ is completely determined by $\theta_i$ and the coordinates of $D_i$. The following lemma says that in two consecutive scales, $\theta_i$ won't change too much. 

\begin{lemma}\label{lemma: difference of theta i}
    There is a constant $C=C(W)$ such that for any $i\in\mathbb{N}^+$,
    \beq\label{difference of theta i}
    |\theta_i-\theta_{i+1}|\leq CR_i^{\beta-1}.
    \eeq
\end{lemma}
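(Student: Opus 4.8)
The plan is to compare the two approximate triple junction maps $U_{R_i}$ and $U_{R_{i+1}}$ by estimating $\|U_{R_i}-U_{R_{i+1}}\|_{L^1(B_1)}$ from above, and then translating a small $L^1$ distance into a small angular discrepancy $|\theta_i-\theta_{i+1}|$. First I would observe that $U_{R_i}$ and $U_{R_{i+1}}$ come from rescaling the \emph{same} entire solution $u$: by construction $u_{R_i}(z) = u(R_i z)$ and $u_{R_{i+1}}(z) = u(R_{i+1}z)$, with $R_i$ and $R_{i+1}$ comparable (both in consecutive dyadic ranges $(2^i R(\e), 2^{i+1}R(\e))$ and $(2^{i+1}R(\e), 2^{i+2}R(\e))$), so their ratio $\rho := R_{i+1}/R_i \in (1,4)$ is bounded. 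Hence on a fixed annulus or subball of $B_1$, $u_{R_i}$ and $u_{R_{i+1}}$ are reparametrizations of $u$ on overlapping regions $B_{R_i}$ and $B_{R_{i+1}}$. The cleanest way to exploit this is to note that $u_{R_{i+1}}(z) = u_{R_i}(\rho z)$, so on the ball $B_{1/\rho}$ (or equivalently after rescaling) we may apply Proposition \ref{prop: L1 closeness of u and U} at \emph{both} scales and compare.

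More precisely, the key step is: on a common region (say $B_{1/4}\subset B_1$, which contains the junction points $\tilde D_i, \tilde D_{i+1}$ since these lie in $B_\tau$ with $\tau \le 1/100$ by Corollary \ref{corol: D is close to origin}), we have both
\[
\|u_{R_i} - U_{R_i}\|_{L^1(B_1)} \le C_1 R_i^{\beta-1}
\quad\text{and}\quad
\|u_{R_{i+1}} - U_{R_{i+1}}\|_{L^1(B_1)} \le C_1 R_{i+1}^{\beta-1} \le C_1 R_i^{\beta-1}.
\]
Since $u_{R_{i+1}}(z) = u_{R_i}(\rho z)$ is just a dilation of $u_{R_i}$, the second estimate, after the change of variables $z \mapsto z/\rho$, gives $\|u_{R_i}(\cdot) - U_{R_{i+1}}(\cdot/\rho)\|_{L^1(B_\rho)} \le C\rho^2 R_i^{\beta-1}$, where $U_{R_{i+1}}(\cdot/\rho)$ is again a triple junction map (dilated triods are triods). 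Restricting everything to $B_{1/4}$ and using the triangle inequality in $L^1(B_{1/4})$,
\[
\|U_{R_i} - \widehat U_{i+1}\|_{L^1(B_{1/4})} \le \|U_{R_i} - u_{R_i}\|_{L^1(B_1)} + \|u_{R_i} - \widehat U_{i+1}\|_{L^1(B_{1/4})} \le C R_i^{\beta-1},
\]
where $\widehat U_{i+1}(z) := U_{R_{i+1}}(z/\rho)$ is the triple junction map obtained by dilating $U_{R_{i+1}}$, with the \emph{same} interface directions $\theta_{i+1}$ and junction point $\tilde D_{i+1}/\rho$.

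Finally I would carry out the elementary geometric step: two triple junction maps on $B_{1/4}$, with junction points $\tilde D, \tilde D'$ both in $B_\tau$ and interface angles $\theta, \theta'$, whose characteristic functions differ by $\eta$ in $L^1(B_{1/4})$, must satisfy $|\theta - \theta'| + |\tilde D - \tilde D'| \le C\eta$. This is because the symmetric difference of the two partitions contains, for each of the three pairs of non-matching sectors, a roughly triangular wedge of area comparable to $(1/4)^2 |\theta-\theta'|$ (plus a strip of width $\sim|\tilde D - \tilde D'|$ along each ray); since the phases $a_i$ are distinct, $|a_i - a_j| \ge c > 0$, so the $L^1$ difference is bounded below by $c$ times the area of this symmetric difference. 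Hence $|\theta_i - \theta_{i+1}| \le C R_i^{\beta-1}$, which is \eqref{difference of theta i}.

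The main obstacle is making the geometric lower bound (the last step) precise and uniform: one must check that the symmetric difference of two $120^\circ$-triods with nearby centers really does have area bounded below by a constant times the angular defect, uniformly as long as both centers stay in $B_\tau$ and the radius $1/4$ is fixed — this requires a little care near the center where the three wedges meet, and one should verify that the "strip" contributions from the displacement $\tilde D - \tilde D'$ do not cancel against the angular contributions. A secondary technical point is bookkeeping the dilation factor $\rho$ and the shrinkage of the domain from $B_1$ to $B_{1/4}$ (or $B_{1/\rho}$) so that all constants remain independent of $i$; this is routine given that $\rho$ is bounded above and below away from $1$ and $\infty$, and that Proposition \ref{prop: L1 closeness of u and U} applies at every scale $R_i$ with the same constant $C_1(W)$.
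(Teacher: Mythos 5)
Your proposal is correct, but it takes a genuinely different route from the paper. The paper does not pass through the $L^1$ estimate of Proposition \ref{prop: L1 closeness of u and U} at all: it normalizes so that the $a_1$--$a_2$ interface at scale $R_{i+1}$ is the $y$-axis, invokes the pointwise localization of Proposition \ref{prop: loc of interface} at \emph{both} scales, and then, assuming the horizontal drift of the segment $D_iA_i$ exceeds $48C_0R_i^{\beta}$, exhibits a single test point $E$ (near a trisection point of $D_iA_i$, pushed to abscissa $5C_0R_i^{\beta}$) that would have to satisfy $|u(E)-a_1|\le 2C\delta$ by the scale-$R_i$ localization and $|u(E)-a_2|\le 2C\delta$ by the scale-$R_{i+1}$ localization — a contradiction. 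Your route instead compares $U_{R_i}$ with a dilate of $U_{R_{i+1}}$ in $L^1(B_{1/4})$ via the triangle inequality and then converts $L^1$-closeness into angular closeness through a quantitative rigidity statement for $120^\circ$-triods. That rigidity step is true and the cancellation you worry about does not occur: writing $\alpha=\theta-\theta'$ and $s_j=(\tilde D-\tilde D')\cdot e_j^{\perp}$, the disagreement region along the $j$-th ray has area about $\int_0^{L}|r\alpha+s_j|\,dr\ge |\alpha|L^2/4$ for every $s_j$ (split at the zero $r_0=|s_j|/|\alpha|$ and consider $r_0\lessgtr L$), so the angular defect is controlled by each ray separately, and summing over the three rays then also controls $|\tilde D-\tilde D'|$ since the $e_j^{\perp}$ span $\BR^2$. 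What your approach buys is a cleaner, more modular argument (and control of the junction point as well as the angle, which the paper's lemma does not claim); what the paper's approach buys is that it avoids proving the rigidity lemma altogether and works with a single pointwise contradiction. Interestingly, your route is the one sketched in the introduction around \eqref{intro: closeness of two consecutive scales}. Two minor corrections: the junction of $\widehat U_{i+1}(z)=U_{R_{i+1}}(z/\rho)$ is $\rho\tilde D_{i+1}$, not $\tilde D_{i+1}/\rho$ (harmless, since $\rho\tau<1/8$), and in the rigidity step you should work in an annulus such as $\{1/8<|z|<1/4\}$ to keep the three disagreement wedges disjoint and away from the two centers.
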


\begin{proof}

    Without loss of generality, we assume $\tilde{D}_{i+1}=(0,0)$ and $\tilde{A}_{i+1}=(0,1)$, which means the approximated $a_1$-$a_2$ interface at scale $R_{i+1}$ aligns with $y$-axis.  Set the coordinates
    \begin{equation*}
        \begin{split}
        &D_{i}=(x_D^i,y_D^i), \ \ A_{i}=(x_A^i, y_A^i),\\
        &\tilde{D}_{i}=\f{1}{R_i}(x_D^i,y_D^i), \ \ \tilde{A}_{i}=\f{1}{R_i}(x_A^i, y_A^i).
        \end{split}
    \end{equation*}

    According to Corollary \ref{corol: D is close to origin} we have 
    \begin{equation*}
        \sqrt{(x_D^i)^2+(y_D^i)^2}\leq \tau R_i, \quad \text{ for some }\tau\leq \f{1}{100}. 
    \end{equation*}

    We consider the original minimizer $u$ on $B_{R_{i+1}}$. By Proposition \ref{prop: loc of interface},
    \begin{equation}\label{u close to a1 or a2 on each sides of y axis}
    \begin{split}
        &|u(x,y)-a_1|\leq 2C\delta,\quad \text{if }x\leq -C_0R_{i+1}^\beta,\ y\geq C_0R_{i+1}^\b,\ (x,y)\in B_{R_{i+1}},\\
        &|u(x,y)-a_2|\leq 2C\delta,\quad \text{if }x\geq C_0R_{i+1}^\beta,\ y\geq C_0R_{i+1}^\b,\ (x,y)\in B_{R_{i+1}},
    \end{split}
    \end{equation}
    for some constant $C_0(W)$.

    Since $R_i\in (2^iR(\e), 2^{i+1}R(\e))$, we have 
    \beqo
    \f14< \f{R_i}{R_{i+1}}<1. 
    \eeqo
    Note that rescaling does not affect the direction of straight lines. Thus in order to show \eqref{difference of theta i}, it suffices to prove that three exists a constant $C$ such that \begin{equation}\label{slope of DiAi}
        |x_A^i-x_D^i|\leq CR_i^{\beta}. 
    \end{equation}
    Now we take $C=48C_0$ and show that violation of \eqref{slope of DiAi} will yield a contradiction. Suppose $|x_A^i-x_D^i|> 48C_0R_i^{\beta}.$ Then it holds that  
    \beqo
    |\f{2x_{A}^i+x_{D}^i}{3}-\f{x_A^i+2x_D^i}{3}|> 16 C_0R_i^{\beta},
    \eeqo
    which further implies that the distance of either $(\frac{2x_A^i+x_D^i}{3},\frac{2y_A^i+y_D^i}{3})$ or $(\frac{x_A^i+2x_D^i}{3},\frac{y_A^i+2y_D^i}{3})$ (which are two trisection points of the line segment $D_iA_i$) from the $y$-axis exceeds $8C_0R_i^{\beta}$.

    Suppose $\frac{2x_A^i+x_D^i}{3}>8C_0R_i^\beta$ (the case $\frac{2x_A^i+x_D^i}{3}<-8C_0R_i^\beta$ or $|\f{x_A^i+2x_D^i}{3}|> 8C_0R_i^\beta$ can be analyzed similarly). We set
    \begin{equation*}
        E:=(5C_0R_i^{\beta}, \frac{2y_A^i+y_D^i}{3}).
    \end{equation*}
    According to \eqref{u close to a1 or a2 on each sides of y axis}, we have $|u(E)-a_2|\leq 2C(W)\delta$ because $5C_0R_i^{\beta}>C_0R_{i+1}^\beta$. On the other hand, we also have 
    \begin{equation*}
        5C_0R_i^{\beta}-\frac{2x_A^i+x_D^i}{3}\leq -3C_0R_i^{\beta},
    \end{equation*}
    which implies that $|u(E)-a_1|\leq 2C(W)\delta$ through a similar estimate as in \eqref{u close to a1 or a2 on each sides of y axis} at scale $R_i$. Consequently, we arrive at a contradiction (as illuatrated in Figure \ref{fig: consecutive scales}), thereby concluding the proof.
\end{proof}

\begin{figure}[ht]
\centering
\begin{tikzpicture}[thick]

\fill[color=black!20] (-0.15,0)--(-0.15,6) --(0.15, 6)--(0.15, 0)--(-0.15,0);

\fill[color=black!20] (-0.33,0.25)--(0.97,3.15) --(1.23, 3.05)--(-0.07, 0.15)--(-0.33,0.25);

\filldraw (0,0) circle (0.03cm) node[below right]{\small{$D_{i+1}=(0,0)$}};
\filldraw (0,6) circle (0.03cm) node[above]{\small{$A_{i+1}=(0,R_{i+1})$}};
\draw (0,0)--(0,6);
\filldraw (-0.2,0.2) circle (0.03cm) node[below left]{$D_{i}$};
\filldraw (1.1,3.1) circle (0.03cm) node[above right]{$A_{i}$};
\draw (-0.2,0.2)--(1.1,3.1);

\draw[dotted] (-0.8,2.2)--(1.5,2.2);
\filldraw (0.4,2.2) circle (0.03cm) node[above]{\small{$E$}};

\end{tikzpicture}
\caption{Grey region: $O(R_i^\beta)$ transition layer between $a_1$ and $a_2$. A significant difference between $\t_i$ and $\t_{i+1}$ leads to a contradiction that $E$ belongs to both the $a_1$ phase and the $a_2$ phase. }
\label{fig: consecutive scales}
\end{figure}
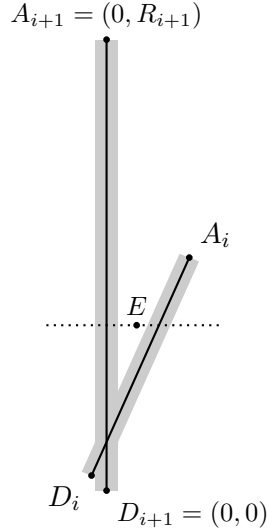

Since $\mathcal{P}_1$ and $\mathcal{P}_2$ are two distinct triple junctions centered at the origin, their directions of $a_1$-$a_2$ interface should be different, i.e. 
\begin{equation*}
    \varphi_1-\varphi_2=\al, \quad \text{for some angle }\al\neq 0.  
\end{equation*}
Here $\varphi_i$ denotes the angle of $\pa \mathcal{D}_i^1\cap \pa \mathcal{D}_i^2$ for $i=1,2$.

For each sufficiently large $k$, there exist $i(k), j(k)\in \mathbb{N}^+$ such that 
\beqo
r_k\in [R_{i(k)}, R_{i(k)+1}],\quad s_k\in [R_{j(k)}, R_{j(k)+1}],
\eeqo
with $i(k)$ and $j(k)$ tend to infinity as $k$ increases.

On $B_1$, $u_{r_k}$ is converging to a triple junction map $u_1$ thanks to \eqref{converge to u1}; while for the comparable scaling $u_{R_{i(k)}}$ is close to another triple junction map $U_{R_{i(k)}}$. By the same argument as in the proof of Lemma \ref{lemma: difference of theta i} we can conclude that
\begin{align*}
&\lim\limits_{k\ri\infty}\|U_{R_{i(k)}}-u_1\|_{L^1(B_1)}=0,\\
&\lim\limits_{k\ri\infty}|\t_{i(k)}-\varphi_1|=0.
\end{align*}
Similarly, we have
\begin{align*}
&\lim\limits_{k\ri\infty}\|U_{R_{j(k)}}-u_2\|_{L^1(B_1)}=0,\\
&\lim\limits_{k\ri\infty}|\t_{j(k)}-\varphi_2|=0.
\end{align*}

Therefore we obtain two subsequences of $\t_{i}$ that converge to distinct angles. However, by Lemma \ref{lemma: difference of theta i}, we have for any $i<j$, 
\begin{equation*}
\begin{split}
|\t_i-\t_j|&\leq \sum\limits_{l=i}^{j-1} CR_l^{\beta-1}\\
&\leq \sum\limits_{l=i}^{j-1} C\left(2^l R(\e)\right)^{\beta-1}\\
&= C2^{i(\beta-1)}R(\e)^{\beta-1} \sum\limits_{l=0}^{j-i-1} 2^{l(\beta-1)}\\
&\leq \f{C2^{i(\beta-1)} R(\e)^{\beta-1}}{1-2^{\beta-1}} \ri 0 \ \text{ as }i\ri\infty,
\end{split}
\end{equation*}
indicating that $\{\t_i\}$ is a Cauchy sequence, which yields a contradiction. The proof of Theorem \ref{main theorem} is complete. 


\appendix

\section{Minimization problem about $y^*$} \label{minimization y*}

Recall the minimization problem \eqref{minimization problem: y*}:
\begin{equation*}
    \min\quad f(y^*)=(y_A'-y^*)+ \sqrt{(x_C'-x_B')^2+(2y^*-y_B'-y_C')^2},
\end{equation*}
subject to $y^*\in (y_C'+d_2,y_A'-d_2)$. Taking derivatives with respect to $y^*$,
\begin{align*}
    \f{\pa f}{\pa y^*}&=-1+\f{2(2y^*-y_B'-y_C')}{\sqrt{(x_C'-x_B')^2+(2y^*-y_B'-y_C')^2}},\\
    \f{\pa^2 f}{\pa {y^*}^2} &=\f{4(x_C'-x_B')^2}{((x_C'-x_B')^2+(2y^*-y_B'-y_C')^2)^{\f23}}>0.
\end{align*}

Form the expression above we have that when $y\sim y_C'$, $\f{\pa f}{\pa y^*}<0$, whereas when $y\sim y_A'$, $\f{\pa f}{\pa y^*}>0$. Therefore there is only one critical point for $f(y^*)$ on the interval $(y_C'+d_2,y_A'-d_2)$, which is a minimum point. Moreover, the minimum $y^*_{min}$ satisfies
\begin{equation*}
    3(2y^*_{min}-y_B'-y_C')^2=(x_C'-x_B')^2
\end{equation*}

Since $\overrightarrow{DB'}$ parallels $(-\sqrt{3},-1)$ and $\overrightarrow{DC'}$ parallels $(\sqrt{3},-1)$,
\begin{equation*}
    \frac{2y_D-y_B'-y_C'}{x_C'-x_B'}=\f{1}{\sqrt{3}},
\end{equation*}
which immediately implies $y^*_{min}=y_D$. 

Finally we compute
\begin{align*}
    |DA|&=y_A'-y_D,\\
    |DB|+|DC|&=2|y_D-y_B'|+2|y_D-y_C'|=\sqrt{(x_C'-x_B')^2+(2y_D-y_B'-y_C')^2}.\\
\end{align*}
Hence,
\begin{equation*}
    \min\ f(y^*)=f(y_D)=|DA|+|DB|+|DC|.
\end{equation*}

\bibliographystyle{acm}
\bibliography{uniqueness}

\begin{thebibliography}{10}

\bibitem{AF}
{\sc Alikakos, N.~D., and Fusco, G.}
\newblock {Sharp lower bounds for vector Allen-Cahn energy and qualitative properties of minimizes under no symmetry hypotheses}.
\newblock {\em arXiv preprint arXiv:2110.00388, to appear in the Bulletin of the Hellenic Mathematical Society\/} (2021).

\bibitem{afs-book}
{\sc Alikakos, N.~D., Fusco, G., and Smyrnelis, P.}
\newblock {\em Elliptic systems of phase transition type}.
\newblock Springer, 2018.

\bibitem{alikakos2024triple}
{\sc Alikakos, N.~D., and Geng, Z.}
\newblock On the triple junction problem without symmetry hypotheses.
\newblock {\em Archive for Rational Mechanics and Analysis 248}, 2 (2024), 24.

\bibitem{allard1981radial}
{\sc Allard, W.~K., and Almgren~Jr, F.~J.}
\newblock On the radial behavior of minimal surfaces and the uniqueness of their tangent cones.
\newblock {\em Annals of Mathematics\/} (1981), 215--265.

\bibitem{Baldo}
{\sc Baldo, S.}
\newblock {Minimal interface criterion for phase transitions in mixtures of Cahn-Hilliard fluids}.
\newblock {\em Annales de l'Institut Henri Poincar{\'e} C, Analyse non lin{\'e}aire 7}, 2 (1990), 67--90.

\bibitem{bronsard1996three}
{\sc Bronsard, L., Gui, C., and Schatzman, M.}
\newblock {A three-layered minimizer in $\mathbb{R}^2$ for a variational problem with a symmetric three-well potential}.
\newblock {\em Communications on pure and applied mathematics 49}, 7 (1996), 677--715.

\bibitem{flores2001higher}
{\sc Flores, G., Padilla, P., and Tonegawa, Y.}
\newblock Higher energy solutions in the theory of phase transitions: a variational approach.
\newblock {\em Journal of Differential Equations 169}, 1 (2001), 190--207.

\bibitem{fusco}
{\sc Fusco, G.}
\newblock {Minimizing under relaxed symmetry constraints: Triple and N-junctions}.
\newblock {\em Annali Scuola Normale Superiore - classe di Scienze\/} (2022).

\bibitem{gazoulis}
{\sc Gazoulis, D.}
\newblock {On the $\Gamma$-convergence of the Allen-Cahn functional with boundary conditions. (preprint)}.

\bibitem{gui2008symmetric}
{\sc Gui, C., and Schatzman, M.}
\newblock Symmetric quadruple phase transitions.
\newblock {\em Indiana University mathematics journal\/} (2008), 781--836.

\bibitem{sandier2023allen}
{\sc Sandier, {\'E}., and Sternberg, P.}
\newblock Allen-cahn solutions with triple junction structure at infinity.
\newblock {\em arXiv preprint arXiv:2305.13474\/} (2023).

\bibitem{simon1983asymptotics}
{\sc Simon, L.}
\newblock Asymptotics for a class of non-linear evolution equations, with applications to geometric problems.
\newblock {\em Annals of Mathematics\/} (1983), 525--571.

\bibitem{sternberg1994local}
{\sc Sternberg, P., and Zeimer, W.~P.}
\newblock Local minimisers of a three-phase partition problem with triple junctions.
\newblock {\em Proceedings of the Royal Society of Edinburgh Section A: Mathematics 124}, 6 (1994), 1059--1073.

\end{thebibliography}

\end{document}